\renewcommand*{\backref}[1]{}
\newtheorem{theorem}{Theorem}[section]
\newtheorem{prop}[theorem]{Proposition}
\newtheorem{lemma}[theorem]{Lemma}
\newtheorem{defn}[theorem]{Definition}
\newtheorem{question}[theorem]{Question}
\theoremstyle{remark}
\newtheorem{remark}[theorem]{Remark}
\newtheorem{example}[theorem]{Example}
\newtheorem{convention}[theorem]{Convention}
\newtheorem{case}{Case}
\newtheorem{subcase}{Case}[case]
\newtheorem{casea}{Case}
\title{Normal closure of finite subgroups of $\mathrm{Aut}(F_n)$ and $\mathrm{Out}(F_n)$}
\author{Jiayi Shen}
\begin{document}
\begin{abstract}
For $n\geq 3$, let $G$ be a nontrivial finite subgroup of $\mathrm{Aut}(F_n)$ with $|G|$ not a power of $2$. We prove that the normal closure $N(G)$ is $\mathrm{SAut}(F_n)$ if $G\subset\mathrm{SAut}(F_n)$ and $N(G)$ is $\mathrm{Aut}(F_n)$ otherwise. When $|G|$ is a power of $2$, we have a partial theorem. Similarly, let $G'$ be a nontrivial finite subgroup of $\mathrm{Out}(F_n)$ with $|G'|$ not a power of $2$. Then the normal closure $N(G')$ is $\mathrm{SOut}(F_n)$ if $G'\subset\mathrm{SOut}(F_n)$ and $N(G')$ is $\mathrm{Out}(F_n)$ otherwise. When $|G'|$ is a power of $2$, we have a partial theorem as well.
\end{abstract}
\maketitle

\section{Introduction}
Let $F_n = \langle x_1, x_2, \ldots, x_n \rangle$ be a free group of rank $n$ generated by $x_1, x_2, \ldots, x_n$. Let $\mathrm{Aut}(F_n)$ denote the automorphism group of $F_n$ and let $\mathrm{Out}(F_n)$ denote the outer automorphism group of $F_n$. By definition, $\mathrm{Out}(F_n)$ is the quotient of  $\mathrm{Aut}(F_n)$ by the inner automorphism group $\text{Inn}(F_n)$ where $\text{Inn}(F_n)\cong F_n$. In this paper, we study the normal closures of finite subgroups of $\mathrm{Aut}(F_n)$ and $\mathrm{Out}(F_n)$.

Roughly speaking, our main theorems say that these normal closures are usually all of  $\mathrm{Aut}(F_n)$ or $\mathrm{Out}(F_n)$. To state them precisely, we need to introduce some notation first. The abelianization map $F_n \rightarrow \mathbb{Z}^n$ induces a homomorphism from $\mathrm{Aut}(F_n)$ to the general linear group $\mathrm{GL}_n(\mathbb{Z})$.  This homomorphism factors through $\mathrm{Out}(F_n)$ and gives a homomorphism $\mathrm{Out}(F_n)\rightarrow \mathrm{GL}_n(\mathbb{Z})$. The \textit{special automorphism group of $F_n$}, denoted $\mathrm{SAut}(F_n)$, is the subgroup of $\mathrm{Aut}(F_n)$ consisting of elements mapping to  $\mathrm{GL}_n(\mathbb{Z})$ with determinant $1$. Similarly, we have the \textit{special outer automorphism group of $F_n$}, denoted $\mathrm{SOut}(F_n)$.

\subsection{Main theorem}
For a subgroup $G$ of a group $\Gamma$, let $N(G)$ be the normal closure of $G$ in $\Gamma$. Our main theorems are as follows:
\begin{theorem}\label{thm1}
For $n\geq 3$, let $G$ be a nontrivial finite subgroup of $\mathrm{Aut}(F_n)$ with $|G|$ not a power of $2$. Then 
\[
N(G) = \left\{
  \begin{array}{ll}
    \mathrm{SAut}(F_n) & \text{if } G\subset\mathrm{SAut}(F_n), \\
    \mathrm{Aut}(F_n) & \text{otherwise}.
  \end{array}
\right.\]

\end{theorem}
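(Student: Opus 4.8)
Here is the plan for the proof. The strategy is to reduce to a cyclic group of odd prime order and then locate a single transvection inside the normal closure; everything else is bookkeeping with the index-two subgroup $\mathrm{SAut}(F_n)$.

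Since $|G|$ is not a power of $2$ it is divisible by an odd prime $p$, so by Cauchy's theorem $G$ contains an element $g$ of order $p$; as $\det(g)\in\{\pm1\}$ and $\det(g)^{p}=1$ with $p$ odd, in fact $g\in\mathrm{SAut}(F_n)$, whence $N(\langle g\rangle)\subseteq N(G)\cap\mathrm{SAut}(F_n)$. I would first note that it suffices to prove: for every $n\ge3$ and every element $g\in\mathrm{SAut}(F_n)$ of odd prime order, the normal closure of $\langle g\rangle$ in $\mathrm{SAut}(F_n)$ is all of $\mathrm{SAut}(F_n)$ (passing from $\mathrm{Aut}(F_n)$-conjugation to $\mathrm{SAut}(F_n)$-conjugation only costs one further $\mathrm{SAut}(F_n)$-conjugate of $g$, still of odd order and so still in $\mathrm{SAut}(F_n)$). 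Granting this, the theorem follows at once: if $G\subseteq\mathrm{SAut}(F_n)$ then $\mathrm{SAut}(F_n)=N(\langle g\rangle)\subseteq N(G)\subseteq\mathrm{SAut}(F_n)$, forcing equality; and if $G\not\subseteq\mathrm{SAut}(F_n)$, choosing $h\in G$ with $\det(h)=-1$ gives $N(G)\supseteq\langle\mathrm{SAut}(F_n),h\rangle=\mathrm{Aut}(F_n)$ since $[\mathrm{Aut}(F_n):\mathrm{SAut}(F_n)]=2$.

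Next I would reduce the displayed special case to producing just one transvection. For $n\ge3$ the group $\mathrm{SAut}(F_n)$ is generated by the elementary transvections $L_{ij}\colon x_i\mapsto x_jx_i$ and $R_{ij}\colon x_i\mapsto x_ix_j$ $(i\ne j)$, and any two of these are conjugate in $\mathrm{SAut}(F_n)$: one permutes and inverts generators by a signed permutation and then repairs the determinant by inverting one more generator $x_k$ with $k\notin\{i,j\}$, which is possible precisely because $n\ge3$. Hence any normal subgroup of $\mathrm{SAut}(F_n)$ containing a single transvection contains all of them, and so equals $\mathrm{SAut}(F_n)$. The whole problem therefore comes down to exhibiting one transvection in $N(\langle g\rangle)$. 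To do this I would invoke the structure theory of finite-order automorphisms of $F_n$: after conjugating $g$ (which changes neither hypothesis nor conclusion, $\mathrm{SAut}(F_n)$ being normal), $g$ is realized as a simplicial automorphism of order $p$ of a finite graph $\Gamma$ with $\pi_1(\Gamma)\cong F_n$ fixing a vertex. Since $p$ is an odd prime, the $\mathbb{Z}/p$-action on $\Gamma$ has only pointwise-fixed cells and free orbits, and the quotient graph-of-groups puts $g$ into a short list of normal forms relative to a well-chosen free basis. For each normal form I would write down the elements $g$, $tgt^{-1}$, $[g,t]$ and their iterated $g$-conjugates for suitable transvections $t$ — all of which lie in $N(\langle g\rangle)$ — and combine finitely many of them to isolate an honest transvection, tracking determinants throughout.

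The hard part is this last step. The obstacle is that a commutator of $g$ with a transvection is typically not itself a transvection: for a permutation-type $g$ one obtains words such as $L_{i'j'}L_{ij}^{-1}$, and for a conjugation-type $g$ one lands inside the Torelli subgroup $\mathrm{IA}_n=\ker(\mathrm{SAut}(F_n)\to\mathrm{SL}_n(\mathbb{Z}))$, so genuine work — a case analysis over the normal forms — is required to cancel these down to a single transvection. A convenient way to organize it is via the extension $1\to\mathrm{IA}_n\to\mathrm{SAut}(F_n)\to\mathrm{SL}_n(\mathbb{Z})\to1$: since $\mathrm{IA}_n$ is torsion-free, the image $\bar g$ of $g$ in $\mathrm{SL}_n(\mathbb{Z})$ is a non-central element of order $p$, and for $n\ge3$ the congruence subgroup property shows its normal closure is all of $\mathrm{SL}_n(\mathbb{Z})$, so $N(\langle g\rangle)$ surjects onto $\mathrm{SL}_n(\mathbb{Z})$. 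But the remaining statement, $N(\langle g\rangle)\supseteq\mathrm{IA}_n$, does not follow formally — knowing only that the product $N(\langle g\rangle)\,\mathrm{IA}_n$ is everything is not enough — and establishing it is exactly the point where the explicit graph-automorphism computations above are indispensable.
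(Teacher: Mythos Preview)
Your outline is essentially the paper's strategy: reduce via Cauchy to an odd-prime-order $g\in\mathrm{SAut}(F_n)$, realize it by Culler as a graph automorphism fixing a vertex, and then manufacture a transvection in $N(\langle g\rangle)$ by explicit commutators with $L_{ij}$'s (the paper's Lemma~3.2 is precisely your ``one transvection suffices'' step). For $p\ge5$ the paper does exactly what you propose and extracts $L_{x_{1,1}x_{3,1}}=\Phi_1\Phi_2$ directly.

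One refinement you should anticipate: for $p=3$ the paper does \emph{not} succeed in isolating a bare transvection in every case. Instead it produces only the conjugation $C_{x_{1,1}x_{2,1}}\in N(f)$ (forcing $\mathrm{IA}_n\subset N(f)$), separately exhibits an element mapping to $e_{k,r}^3$ or $e_{k,r}^4$ in $\overline{N(f)}$, and then uses simplicity of $\mathrm{PSL}_n(\mathbb Z/3\mathbb Z)$ or $\mathrm{SL}_n(\mathbb Z/2\mathbb Z)$ (plus a non-splitting argument) to conclude $\overline{N(f)}\supseteq\mathrm{SL}_n(\mathbb Z)$. This is a finite-quotient version of the congruence-subgroup heuristic you sketch in your last paragraph, and it is exactly how the paper closes the gap you correctly flag as the hard part. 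So your plan is sound, but when you carry out the $p=3$ computations be prepared for the output to be ``a Torelli generator plus a congruence-level elementary matrix'' rather than a transvection on the nose.
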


\begin{theorem}\label{thm2}
For $n\geq 3$, let $G'$ be a nontrivial finite subgroup of $\mathrm{Out}(F_n)$ with $|G'|$ not a power of $2$. Then
\[
N(G') = \left\{
  \begin{array}{ll}
    \mathrm{SOut}(F_n) & \text{if } G'\subset\mathrm{SOut}(F_n), \\
    \mathrm{Out}(F_n) & \text{otherwise}.
  \end{array}
\right.\]
\end{theorem}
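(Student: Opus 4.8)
The plan is to prove Theorem~\ref{thm1} and then deduce Theorem~\ref{thm2} from it. For the deduction I would use three standard facts: every finite subgroup of $\mathrm{Out}(F_n)$ lifts to a finite subgroup of $\mathrm{Aut}(F_n)$ of the same order (a form of Nielsen realization for free groups); the determinant homomorphism $\mathrm{Aut}(F_n)\to\{\pm 1\}$ already factors through $\mathrm{Out}(F_n)$, so a lift lies in $\mathrm{SAut}(F_n)$ exactly when the original subgroup lies in $\mathrm{SOut}(F_n)$; and the quotient map $q\colon\mathrm{Aut}(F_n)\twoheadrightarrow\mathrm{Out}(F_n)$ carries normal closures to normal closures and $\mathrm{SAut}(F_n)$ onto $\mathrm{SOut}(F_n)$. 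Given $G'$ as in Theorem~\ref{thm2}, pick a lift $G\subset\mathrm{Aut}(F_n)$, apply Theorem~\ref{thm1} to $G$, and push the conclusion down along $q$. For Theorem~\ref{thm1} itself: since $|G|$ is not a power of $2$, $G$ contains an element $\varphi$ of odd prime order $p$; as $N(\langle\varphi\rangle)\subseteq N(G)$ and $\varphi$ lies in $\mathrm{SAut}(F_n)$ automatically, it is enough to take $G=\langle\varphi\rangle$. If $G\subseteq\mathrm{SAut}(F_n)$ this is the assertion $N(\varphi)=\mathrm{SAut}(F_n)$; if not, then $G\cap\mathrm{SAut}(F_n)$ has index $2$ in $G$ and order still not a power of $2$, so $N(G)\supseteq N(G\cap\mathrm{SAut}(F_n))=\mathrm{SAut}(F_n)$ by the previous case, while $N(G)$ also contains a determinant $-1$ automorphism and $[\mathrm{Aut}(F_n):\mathrm{SAut}(F_n)]=2$, forcing $N(G)=\mathrm{Aut}(F_n)$. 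Thus everything reduces to the following: for $n\geq 3$ and $\varphi\in\mathrm{SAut}(F_n)$ of odd prime order $p$, $N(\varphi)=\mathrm{SAut}(F_n)$.

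Next I would reduce this to producing one transvection inside $N(\varphi)$. Since $\mathrm{SAut}(F_n)\trianglelefteq\mathrm{Aut}(F_n)$, we have $N(\varphi)\subseteq\mathrm{SAut}(F_n)$, so only the reverse inclusion is in question. For $n\geq 3$ the elementary transvections $\rho_{ij}^{\pm 1},\lambda_{ij}^{\pm 1}$ generate $\mathrm{SAut}(F_n)$ and are all conjugate to one another in $\mathrm{SAut}(F_n)$: one passes between them using basis $3$-cycles and double sign changes $x_a\mapsto x_a^{-1},\ x_b\mapsto x_b^{-1}$, all of which lie in $\mathrm{SAut}(F_n)$ once $n\geq 3$. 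Hence the normal closure in $\mathrm{Aut}(F_n)$ of any single transvection already equals $\mathrm{SAut}(F_n)$, and it suffices to exhibit a transvection in $N(\varphi)$.

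To do this I would pass to a geometric model: realize $\langle\varphi\rangle\cong\mathbb{Z}/p$ as a group of simplicial automorphisms of a finite connected graph $Y$ with $\pi_1(Y)\cong F_n$, without edge inversions after subdivision and reduced after collapsing a maximal $\varphi$-invariant forest (the orbifold Euler characteristic of $Y/\langle\varphi\rangle$ is $(1-n)/p$, which limits the possibilities); since $\varphi\neq 1$ and $p$ is odd, $\varphi$ moves some edge. In the principal case $\varphi$ fixes a vertex and acts on a rose there, i.e.\ there is a free basis $x_1,\dots,x_n$ with $\varphi(x_i)=x_{\sigma(i)}$ for a permutation $\sigma$ that is a product of $p$-cycles and fixed points; unless the abelianized action $\bar\varphi$ has no trivial $\mathbb{Z}[\mathbb{Z}/p]$-summand, one can further arrange that $\sigma$ has both a fixed point $i$ and a nontrivial cycle, say $j\mapsto k:=\sigma(j)$ with $i,j,k$ distinct, and then $[\varphi,\rho_{ij}]=\rho_{ik}\rho_{ij}^{-1}\in N(\varphi)$ is the automorphism fixing every $x_\ell$ with $\ell\neq i$ and sending $x_i\mapsto x_ix_kx_j^{-1}$; conjugating it by $\rho_{kj}$ turns it into the transvection $\rho_{ik}\in N(\varphi)$, and we are done. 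The remaining ``atomic'' cases are those in which $\bar\varphi$ has no trivial summand: they include the extremal orders $p=n$ ($\bar\varphi$ an $n$-cycle) and $p=n+1$ (the companion-of-$\Phi_p$ automorphism, e.g.\ $x_i\mapsto x_{i+1}$, $x_n\mapsto(x_1\cdots x_n)^{-1}$), together with ``block'' situations such as $\bar\varphi\cong\mathbb{Z}[\mathbb{Z}/p]^{\oplus 2}$, where a single commutator $[\varphi,\rho]$ only yields a product of transvections. For these I would use the structural backbone: first, $N(\varphi)$ surjects onto $\mathrm{SL}_n(\mathbb{Z})$ — its image is the normal closure of $\bar\varphi\in\mathrm{SL}_n(\mathbb{Z})$, which has order exactly $p$ because $\mathrm{IA}_n$ is torsion free (it sits inside the torsion-free level-$3$ congruence subgroup of $\mathrm{Aut}(F_n)$), and an element of odd order $p>1$ lies in no proper normal subgroup of $\mathrm{SL}_n(\mathbb{Z})$ for $n\geq 3$ (by the Normal Subgroup Theorem and the Congruence Subgroup Property, noting that $\bar\varphi\not\equiv I\pmod 2$ so it maps to a nontrivial element of the simple group $\mathrm{SL}_n(\mathbb{F}_2)$, and more generally to a noncentral element of $\mathrm{SL}_n(\mathbb{F}_\ell)$ whenever $\ell\not\equiv 1\pmod p$); second, $N(\varphi)\supseteq\mathrm{IA}_n$, which I would get by placing suitable conjugates of the elementary automorphisms $K_{ij}$ and $K_{ijk}$ — which normally generate $\mathrm{IA}_n$ in $\mathrm{Aut}(F_n)$ — into $N(\varphi)$, using the graph model together with the fact that $\bar\varphi$ is not scalar, so that $(\bar\varphi-1)$ generates, under $\mathrm{SL}_n(\mathbb{Z})$, a finite-index submodule of $\mathrm{IA}_n^{\mathrm{ab}}\cong\wedge^2\mathbb{Z}^n\otimes(\mathbb{Z}^n)^{*}$ and of its lower-central successors. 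Combining the two, $N(\varphi)=N(\varphi)\cdot\mathrm{IA}_n=\mathrm{SAut}(F_n)$.

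The step I expect to be the real obstacle is this last one: organizing the case analysis over the geometric models of a periodic automorphism, and especially the atomic cases, where one cannot simply read off a transvection from one commutator and must instead build more intricate words in $\mathrm{Aut}(F_n)$-conjugates of $\varphi$, or control $N(\varphi)\cap\mathrm{IA}_n$ through the whole filtration of $\mathrm{IA}_n$. The oddness of $p$ is used essentially: an order-$2$ automorphism such as $x_1\mapsto x_1^{-1},\ x_2\mapsto x_2^{-1}$ has $\bar\varphi\equiv I\pmod 2$, so its normal closure is trapped inside the preimage of the proper congruence subgroup $\Gamma(2)\subsetneq\mathrm{SL}_n(\mathbb{Z})$ and the conclusion genuinely fails — which is precisely why only a partial statement is available when $|G|$ is a power of $2$.
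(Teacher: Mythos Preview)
Your deduction of Theorem~\ref{thm2} from Theorem~\ref{thm1} rests on the claim that every finite subgroup of $\mathrm{Out}(F_n)$ lifts to a finite subgroup of $\mathrm{Aut}(F_n)$ of the same order. This is false, and the paper flags it explicitly: the order-$3$ element $\bar g\in\mathrm{Out}(F_4)$ from Example~\ref{example1} has the property that no torsion element of $\mathrm{Aut}(F_4)$ projects to it. Culler's realization theorem only says that a finite $G'<\mathrm{Out}(F_n)$ is realized by automorphisms of some graph $X$ with $\pi_1(X)\cong F_n$; when $G'$ acts on $X$ without a global fixed vertex the induced homomorphism lands in $\mathrm{Out}(F_n)$ but not in $\mathrm{Aut}(F_n)$, and there is in general no way to rectify this. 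The paper's proof of Proposition~\ref{prop2} is built precisely around this obstruction: if the prime-order $f\in\mathrm{Out}(F_n)$ happens to lift, one quotes Proposition~\ref{prop1}; if it does not, one shows (Proposition~\ref{Out}) that $f$ is realized by the rotation of a specific graph $R_{p,m}$, and then does a fresh commutator computation in $\mathrm{Out}(F_n)$ to produce a transvection in $N(f)$. Your plan skips this entire second branch.

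A secondary issue is that your handling of the ``atomic'' cases for Theorem~\ref{thm1} is more of a wish list than an argument. The paper's proof of Proposition~\ref{prop1} separates $p\ge 5$ (where one commutator trick yields a transvection directly) from $p=3$ (where it does not, and one instead shows $C_{x_1x_2}\in N(f)$, then $e_{k,r}^4$ or $e_{k,r}^3$ in $\overline{N(f)}$, and invokes non-splitting results for $\mathrm{SL}_n(\mathbb{Z}/4\mathbb{Z})\to\mathrm{SL}_n(\mathbb{Z}/2\mathbb{Z})$ and $\mathrm{PSL}_n(\mathbb{Z}/4\mathbb{Z})\to\mathrm{PSL}_n(\mathbb{Z}/2\mathbb{Z})$ via Lemmas~\ref{level 3} and~\ref{level 4}); the $n=4$, $p=3$ case is dealt with by hand. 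Your appeal to ``the Normal Subgroup Theorem and the Congruence Subgroup Property'' together with a proposed control of $N(\varphi)\cap\mathrm{IA}_n$ through the lower central series of $\mathrm{IA}_n$ does not supply these steps and would itself need substantial work to be made rigorous.
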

\begin{example}\label{example1}Define automorphisms $f\in\mathrm{Aut}(F_6)$  and $g\in\mathrm{Aut}(F_4)$ as follows:
\begin{align*}
f: &x_1 \mapsto x_2               &g: &x_1 \mapsto x_2 \\
   &x_2 \mapsto x_2^{-1}x_1^{-1}  &   &x_2 \mapsto x_3 \\
   &x_3 \mapsto x_4               &   &x_3 \mapsto x_4 x_1 x_4^{-1} \\
   &x_4 \mapsto x_5               &   &x_4 \mapsto x_4 \\
   &x_5 \mapsto x_6               &   &\\
   &x_6 \mapsto x_3               &   &
\end{align*}
Let $\bar{g}$ be the image of $g$ in $\mathrm{Out}(F_4)$. Consider the subgroups $G = \langle f \rangle\le\mathrm{Aut}(F_6)$ and $G'= \langle \bar{g} \rangle\le\mathrm{Out}(F_4)$. Notice that $|G| = 12$ and the image of $f$ in $\mathrm{GL}_6(\mathbb{Z})$ has determinant $-1$.  By  Theorem \ref{thm1}, $N(G) = \mathrm{Aut}(F_6)$.  The subgroup $G'$ has order $3$ and is contained in $\mathrm{SOut}(F_4)$. By  Theorem \ref{thm2},  $N(G') = \mathrm{SOut}(F_4)$. 
\end{example}
\begin{remark}
From the example above, the outer automorphism $\bar{g}$ is a torsion element in $\mathrm{Out}(F_4)$ while in $\mathrm{Aut}(F_4)$, the automorphism $g$ is not. One can check that no torsion elements of $\mathrm{Aut}(F_4)$ project to $\bar{g}$. 
\end{remark}

\subsection{Previous work}
Let $S_g$ denote a connected, closed, orientable surface of genus $g$. The mapping class group $\mathrm{Mod}(S_g)$ is the group of homotopy classes of orientation-preserving homeomorphisms of $S_g$. The group $\mathrm{Out}(F_n)$ is in many ways analogous to $\mathrm{Mod}(S_g)$. For an overview of these similarities, see \cite{BV}. Our theorems are inspired by analogous theorems of Lanier--Margalit for $\mathrm{Mod}(S_g)$ which we now discuss.

If an element of $\mathrm{Mod}(S_g)$ has normal closure equal to the whole group, we say that the element \textit{normally generates} $\mathrm{Mod}(S_g)$ and we call this element a \textit{normal generator}. In $2018$, Lanier--Margalit \cite{LM} proved that for $g\geq 3$, every nontrivial periodic mapping class that is not a hyperelliptic involution normally generates $\mathrm{Mod}(S_g)$. \textit{Hyperelliptic involutions} are certain order-$2$ elements of $\mathrm{Mod}(S_g)$ that map to $-I\in\mathrm{Sp}_{2g}(\mathbb{Z})$ under the standard symplectic representation. They also showed that for $g\geq 3$, the normal closure of a hyperelliptic involution is the preimage of $\{\pm I\}$ under the symplectic representation.

\subsection{Proof idea} 
While our theorem is algebraic, we will use a geometric approach to study finite subgroups of $\mathrm{Aut}(F_n)$ and $\mathrm{Out}(F_n)$. A \textit{graph} is a $1-$dimensional CW-complex. An \textit{automorphism of a graph} is a cellular homeomorphism that is linear on the edges and has no inversions. Notice that any cellular homeomorphism that is linear on edges but inverts an edge can be made into an automorphism of the graph by barycentrically subdividing that edge. Let $X$ be a connected graph with fundamental group $F_n$ and $\mathrm{Aut}(X)$ be the automorphism group of the graph $X$. Let $(X, *)$ denote the graph $X$ with a basepoint. Since $X$ is a $K(F_n, 1)$, we can view $\mathrm{Aut}(F_n)$ as the group of homotopy classes of basepoint-preserving homotopy equivalences of  $X$ and $\mathrm{Out}(F_n)$ can be viewed as the group of homotopy classes of  homotopy equivalences of $X$. Notice that there are natural projections $\mathrm{Aut}(X,*)\rightarrow\mathrm{Aut}(F_n)$ and $\mathrm{Aut}(X)\rightarrow\mathrm{Out}(F_n)$. The groups $\mathrm{Aut}(X,*)$ and $\mathrm{Aut}(X)$ are finite.

Culler \cite{MC} showed that for any finite subgroup $G<\mathrm{Aut}(F_n)$, there exists a finite based graph $(X,*)$ and a subgroup $\tilde{G}<\mathrm{Aut}(X,*)$ such that the map $\mathrm{Aut}(X,*)\rightarrow\mathrm{Aut}(F_n)$ takes $\tilde{G}$ isomorphically onto $G$. He proved a similar statement for finite subgroups of $\mathrm{Out}(F_n)$. In these situations, we say that a finite subgroup  $G<\mathrm{Aut}(F_n)$ (resp. $G<\mathrm{Out}(F_n)$) is \textit{realized} by $\tilde{G}<\mathrm{Aut}(X,*)$ (resp. $\tilde{G}<\mathrm{Aut}(X)$). We will further explore the realization of finite subgroups in $\mathrm{Out}(F_n)$ by subgroups of $\mathrm{Aut}(X)$ in Section $4$. 

We prove our main theorems by studying the automorphism groups of graphs. Before we dive into the proof, let us see two examples.

\begin{convention}
The edges of our graphs will be labeled with labels like $s_i$, $\ell_i$ $(i\in\mathbb{Z})$, etc. The subscripts should always be taken modulo an appropriate number. For instance, if we have edges $s_1,\ldots,s_5$, then $s_6$ means $s_1$.
\end{convention}

\begin{figure}[htbp]
\centering
\begin{minipage}[t]{0.4\textwidth}
\centering
\includegraphics[scale=0.5]{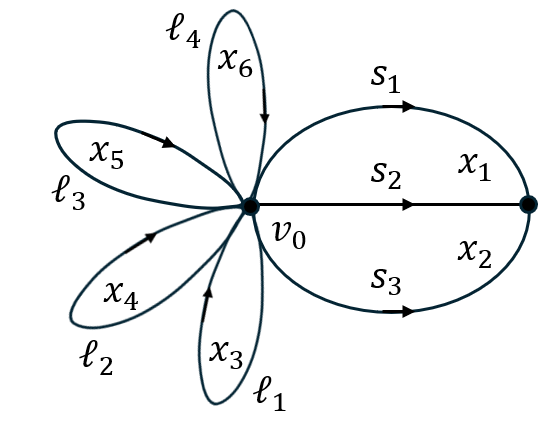}
\caption{Graph $X$ with $\pi_1(X,v_0)= F_6$\label{F_6}}
\end{minipage}
\begin{minipage}[t]{0.4\textwidth}
\centering
\includegraphics[scale=0.5]{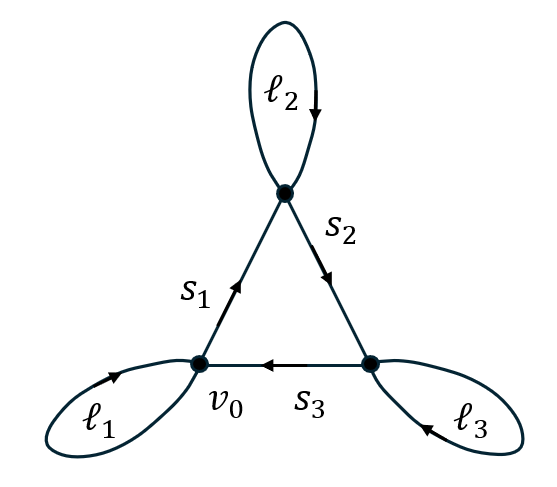}
\caption{Graph $Y$ with $\pi_1(Y,v_0)= F_4$\label{F_4}}
\end{minipage}
\end{figure}
\begin{example}\label{example2}
Let $X$ be the graph in Figure \ref{F_6}. This graph has oriented edges $\ell_1,\ldots, \ell_4$ and $s_1,\ldots,s_3$. Let  $\tilde{f}\in\mathrm{Aut}(X, v_0)$ be the automorphism that on these edges satisfies
\[ \tilde{f}(\ell_i)=\ell_{i+1}, \quad \tilde{f}(s_i)=s_{i+1}.\]
Consider the following basis of $\pi_1(X,v_0)$:
\[x_1=s_1s_2^{-1}, \quad x_2=s_2s_3^{-1},\quad x_3=\ell_1, \quad x_4=\ell_2,\quad x_5=\ell_3,\quad x_6=\ell_4.\]
Thus, the action of $\tilde{f}_*$ on $\pi_1(X,v_0)$ is 
\[\tilde{f}_*(x_1)=x_2, \quad \tilde{f}_*(x_2)=x_2^{-1}x_1^{-1},\quad \tilde{f}_*(x_3)=x_4, \quad \tilde{f}_*(x_4)=x_5,\quad \tilde{f}_*(x_5)=x_6,\quad \tilde{f}_*(x_6)=x_3.\]
Identify $\pi_1(X,v_0)$ with $F_6$. We see that $\tilde{f}$  realizes $f$ where $f$ is defined in Example \ref{example1}.
\end{example}

\begin{example}\label{example3}
Let $Y$ be the graph in Figure \ref{F_4}. Define $\tilde{g}\in\mathrm{Aut}(Y)$ via $\tilde{g}(s_i)=s_{i+1}$ and $\tilde{g}(\ell_i)=\ell_{i+1}$. Consider the following basis of $\pi_1(Y,v_0)$:
\[x_1 = \ell_1, \quad x_2 = s_1 \ell_2 s_1^{-1}, \quad x_3 = s_1 s_2 \ell_3 s_2^{-1} s_1^{-1}, \quad x_4 = s_1 s_2 s_3.\]
The action $\tilde{g}_*$ on $\pi_1(Y, v_0)$ satisfies
\[\tilde{g}_*(x_1)=x_2, \quad \tilde{g}_*(x_2)=x_3,\quad \tilde{g}_*(x_3)=x_4x_1x_4^{-1}, \quad \tilde{g}_*(x_4)=x_4.\]
Identify $\pi_1(Y,v_0)$ with $F_4$. We see that $\tilde{g}$  realizes $g$ where $g$ is defined in Example \ref{example1}.
\end{example}
\begin{remark}
In the realm of $\mathrm{Mod}(S_g)$,  we have the Nielsen realization theorem of Kerckhoff \cite{SK}. Let $\text{Homeo}^+(S_g)$ denote the orientation-preserving homeomorphism group of $S_g$. This theorem says that for a finite subgroup $G$ in $\mathrm{Mod}(S_g)$, there exists a finite group $\tilde{G}$ in $\text{Homeo}^+(S_g)$ such that the natural projection $\text{Homeo}^+(S_g)\rightarrow \mathrm{Mod}(S_g)$ restricts to an isomorphism $\tilde{G}\rightarrow G$. In other words, every finite subgroup of $\mathrm{Mod}(S_g)$ comes from a finite subgroup of $\text{Homeo}^+(S_g)$. The realization theorem in $\mathrm{Out}(F_n)$ that Culler proved is a $1-$dimensional analog of  the Nielsen realization theorem.
\end{remark}
\subsection{The case $p=2$}
Let $f$ be an order $2$ element in $\mathrm{Aut}(F_n)$. Our main theorem for $p=2$ is too complicated to state here. We will discuss it in Section $5$. There is one case we are unable to handle. Consider the following question:
\begin{question}\label{question}
Given $f\in\mathrm{Aut}(F_n)$ defined as follows:
\begin{align*}
f:&x_0\mapsto x_0 \\      
    &x_1 \mapsto x_1^{-1} \\       
    &x_2 \mapsto x_1x_2^{-1}x_1^{-1} \\  
    &\vdots \quad\quad\quad\vdots \\                          
    &x_{n-1} \mapsto (x_1 \ldots x_{n-2})x_{n-1}(x_{n-2} \ldots x_1^{-1})^{-1}
\end{align*}
What is the normal closure of $f$?
\end{question}

\subsection{The case $n = 2$}
The condition $n\geq 3$ in our theorem is necessary. Here is what happens for $n=2$. Nielsen \cite{JN} showed that $\mathrm{Out}(F_2)\cong\mathrm{GL}_2(\mathbb{Z})$. One can check that all finite order matrices in $\mathrm{GL}_2(\mathbb{Z})$ with order $\geq 3$ are in $\mathrm{SL}_2(\mathbb{Z})$. Since
\[
\mathrm{SL}_2(\mathbb{Z})\cong\mathbb{Z}/4\mathbb{Z}\underset{\mathbb{Z}/2\mathbb{Z}}{\ast}\mathbb{Z}/6\mathbb{Z},\]
if a matrix $M\in\mathrm{SL}_2(\mathbb{Z})$ has finite order, then it is conjugate to either the $\mathbb{Z}/4\mathbb{Z}$-factor or the $\mathbb{Z}/6\mathbb{Z}$-factor; see, e.g., \cite{JS}. The order of $M$ will be $2,3,4$ or $6$. Let $G$ be the normal closure of $M$. We have
\[
    \mathrm{SL}_2(\mathbb{Z})/G \cong 
    \begin{cases}
        \mathrm{PSL}_2(\mathbb{Z})& \text{if  order} (M) = 2, \\
        \mathbb{Z}/4\mathbb{Z} & \text{if order}(M) = 3, \\
        \mathbb{Z}/3\mathbb{Z} & \text{if order}(M) = 4, \\
        \mathbb{Z}/2\mathbb{Z} & \text{if order}(M) = 6. \\
    \end{cases}
\]
Any order $2$ matrix in $\mathrm{GL}_2(\mathbb{Z})$ that does not lie in $\mathrm{SL}_2(\mathbb{Z})$ is conjugate to
\[
\begin{pmatrix}
-1 & 0 \\
0 & 1
\end{pmatrix}.
\]
One can show that the normal closure of this matrix is the level $2$ subgroup of $\mathrm{GL}_2(\mathbb{Z})$.

\subsection{Outline of paper}
For a finite subgroup $G<\mathrm{Aut}(F_n)$ (resp. $G<\mathrm{Out}(F_n)$) with $|G|$ not a power of $2$, it is enough to prove the theorems for the generators of $G$ with odd prime order. Sections $3$ and $4$ of the paper are devoted to proving the following two propositions. We have a partial theorem for the $p=2$ case. We will state and discuss it in Section $5$.
\begin{prop}
\label{prop1}
Let $f$ be an order $p$ element in $\mathrm{Aut}(F_n)$, where $n\geq 3$ and $p$ is an odd prime number. Let $\bar{f}$ be the image of $f$ in  $\mathrm{GL}_n(\mathbb{Z})$. Then,
\[
    N(f) = 
    \begin{cases}
        \mathrm{SAut}(F_n) & \text{if det}(\bar{f}) = 1, \\
        \mathrm{Aut}(F_n) & \text{if det}(\bar{f}) = -1.
    \end{cases}
\]
\end{prop}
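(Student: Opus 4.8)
Since $p$ is odd, $\det(\bar f)$ lies in $\{\pm1\}$ and has order dividing $p$, so $\det(\bar f)=1$; hence $f\in\mathrm{SAut}(F_n)$ and only the first alternative can occur. As $\mathrm{SAut}(F_n)$ is normal in $\mathrm{Aut}(F_n)$ and contains $f$, the inclusion $N(f)\subseteq\mathrm{SAut}(F_n)$ is automatic, and the whole task is to prove $\mathrm{SAut}(F_n)\subseteq N(f)$. I would reduce this to producing a single \emph{elementary transvection} $\rho_{ij}\colon x_i\mapsto x_ix_j$ $(i\ne j)$ inside $N(f)$: the automorphisms $x_i\mapsto x_ix_j^{\pm1}$ and $x_i\mapsto x_j^{\pm1}x_i$ are all conjugate to one another in $\mathrm{Aut}(F_n)$ (conjugate by permutations and inversions of the generators) and they generate $\mathrm{SAut}(F_n)$, so a single transvection lying in the normal subgroup $N(f)$ forces all of $\mathrm{SAut}(F_n)$ into $N(f)$.

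To find a transvection I would argue geometrically. By Culler's theorem, realize $\langle f\rangle\cong\mathbb Z/p$ as automorphisms of a finite connected graph $X$ with $\pi_1(X,v_0)\cong F_n$ and $f(v_0)=v_0$; since $p$ is prime every orbit of cells has size $1$ or $p$, so $X$ is built from its fixed subgraph $X^f$ (nonempty, since $v_0\in X^f$) by attaching $p$-fold symmetric families of cells, and $(X,f)$ is determined up to conjugacy by $X/f$ together with its branching data. The basic mechanism is that $gfg^{-1}f^{-1}\in N(f)$ for every $g\in\mathrm{Aut}(F_n)$, used with $g$ an \emph{edge slide} adapted to $X$. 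If $X^f$ contains an essential loop — equivalently, $f$ fixes a primitive element $y$, which one can arrange to sit as a petal wedged on at $v_0$ so that $f$ preserves the rest of $X$ — then sliding $y$ across a loop $w$ in the rest of $X$, i.e.\ $g\colon y\mapsto yw$ fixing every other generator, gives
\[
gfg^{-1}f^{-1}(y)=y\cdot w\cdot f(w)^{-1},
\]
while $gfg^{-1}f^{-1}$ fixes every generator other than $y$. Since a nontrivial finite-order automorphism acts nontrivially on $H_1$, one can choose $w$ — realized as a loop running through an edge that $f$ moves, which is what makes the argument go — so that $w\,f(w)^{-1}$ is a nontrivial primitive element; after a change of basis this is an honest transvection, and we are done. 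In the remaining cases $X^f$ carries no essential loop (typical examples: a cyclic permutation of the petals of a rose, a rotation of the two-vertex graph with $p$ parallel edges, or products of such actions), and sliding one $f$-orbit edge across another only produces a nontrivial \emph{product of two} elementary transvections of the shape $\rho_{ij}\rho_{kl}^{\pm1}$; there one must show separately that such an element by itself normally generates $\mathrm{SAut}(F_n)$ when $n\ge3$.

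I expect the main obstacle to be the combination of (i) the normal-form analysis of $(X,f)$ — collapsing $f$-invariant forests and applying equivariant Whitehead and blow-up moves to reduce to finitely many model actions and to pin down the right slide in each — and (ii) the algebraic lemma that a nontrivial $\rho_{ij}\rho_{kl}^{\pm1}$ normally generates $\mathrm{SAut}(F_n)$ for $n\ge3$. For (ii) I would combine the element with its $f$-conjugates and with further commutators $\tau h\tau^{-1}h^{-1}$ (using $n\ge3$ to keep a spare generator available) to peel off first a commutator transvection $x_1\mapsto x_1[x_2,x_3]$ and then a genuine $\rho_{ij}$, also using that the image of $\rho_{ij}\rho_{kl}^{\pm1}$ in $\mathrm{GL}_n(\mathbb Z)$ lies in no proper congruence subgroup. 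This is where the hypotheses get spent: the statement is false for $n=2$ and only partially true for $p=2$, so the case analysis must check that every model yields a transvection in $N(f)$ and none is left stranded. The remaining ingredients — Culler realization, generation of $\mathrm{SAut}(F_n)$ by transvections, and their mutual conjugacy — are standard.
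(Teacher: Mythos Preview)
Your overall architecture is the paper's: realize $\langle f\rangle$ on a based graph via Culler, manufacture elements of $N(f)$ as commutators $[g,f]$ with $g$ a Nielsen transvection adapted to the graph, and finish via the lemma that a single $L_{x_ix_j}$ in $N(f)$ forces $N(f)=\mathrm{SAut}(F_n)$. Your observation that $\det(\bar f)=1$ automatically is correct and cleaner than the paper's case-by-case verification.

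Where your plan diverges is in the case analysis and in what actually comes out of the commutators. The paper first normalizes the realization so that $\tilde f$ fixes \emph{every} vertex and the only fixed edges are loops; this collapses your ``normal-form analysis of $(X,f)$'' to two shapes: $X$ is a rose, or $X$ contains a hairy subgraph $H_{pk}$ (two vertices, $pk$ parallel edges permuted in $p$-cycles). On the rose and on $H_{pk}$ with $p\ge 5$, the paper does not stop at a product $\rho_{ij}\rho_{kl}^{\pm1}$: it takes the \emph{two} commutators $\Phi_1=L^{-1}fLf^{-1}$ and $\Phi_2=LfL^{-1}f^{-1}$ with $L=L_{x_{1,1}x_{2,1}}$ and observes that their composition is exactly the single transvection $L_{x_{1,1}x_{3,1}}$. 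This uses $f(x_{2,1})=x_{3,1}$, which needs $p\ge 5$ in the hairy case (and works for all $p\ge3$ on the rose). So your ``algebraic lemma'' that $\rho_{ij}\rho_{kl}^{\pm1}$ normally generates is never needed there.

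The genuine gap is $p=3$ on a graph containing $H_{3k}$. Here no product of two transvections drops out; the natural commutators give the \emph{conjugation} $C_{x_{1,1}x_{2,1}}$, which only yields $IA_n\subset N(f)$. The paper then has to work much harder: it produces an element whose image in $\mathrm{GL}_n(\mathbb Z)$ is $e_{2,1}^4$ (so $\Gamma_n(4)\subset\overline{N(f)}$), exhibits something nontrivial mod $2$, and finishes via a lemma that uses simplicity of $\mathrm{SL}_n(\mathbb Z/2\mathbb Z)$ together with the nonsplitting of $\mathrm{SL}_n(\mathbb Z/4\mathbb Z)\to\mathrm{SL}_n(\mathbb Z/2\mathbb Z)$ and, for even $n\ge6$, a new result that $\mathrm{PSL}_n(\mathbb Z/4\mathbb Z)\to\mathrm{PSL}_n(\mathbb Z/2\mathbb Z)$ does not split. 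The case $n=4$ is handled separately by a level-$3$ argument on three explicit graphs. None of this is captured by ``peel off a commutator transvection and then a genuine $\rho_{ij}$''; your sketch for the algebraic lemma is too vague to cover it, and in fact the element you would be starting from is not of the form $\rho_{ij}\rho_{kl}^{\pm1}$ in this case.
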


\begin{prop}\label{prop2}
Let $f$ be an order $p$ element in $\mathrm{Out}(F_n)$, where $n\geq 3$ and $p$ is an odd prime number. Let $\bar{f}$ be the image of $f$ in  $\mathrm{GL}_n(\mathbb{Z})$. Then,
\[
    N(f) = 
    \begin{cases}
        \mathrm{SOut}(F_n) & \text{if det}(\bar{f}) = 1, \\
        \mathrm{Out}(F_n) & \text{if det}(\bar{f}) = -1.
    \end{cases}
\]
\end{prop}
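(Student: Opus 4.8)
\medskip
\noindent\textit{Proof proposal.}
The plan is to deduce Proposition~\ref{prop2} from Proposition~\ref{prop1} by means of the canonical projection $q\colon\mathrm{Aut}(F_n)\to\mathrm{Out}(F_n)$. Since $\det(\bar{f})^{p}=\det(\bar{f}^{p})=1$ and $p$ is odd, we must have $\det(\bar{f})=1$ (in particular, for an element of odd prime order the $\det(\bar{f})=-1$ alternative does not arise), so $f\in\mathrm{SOut}(F_n)$. As $\mathrm{SOut}(F_n)$ is the kernel of the determinant homomorphism $\mathrm{Out}(F_n)\to\{\pm1\}$ it is normal, hence $N(f)\subseteq\mathrm{SOut}(F_n)$; the content is the reverse inclusion.

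For ``$\supseteq$'' I would use two elementary properties of $q$: because $q$ is onto and the determinant descends to $\mathrm{Out}(F_n)$, we have $q(\mathrm{SAut}(F_n))=\mathrm{SOut}(F_n)$, and for any subset $S\subseteq\mathrm{Aut}(F_n)$ one has $q\bigl(N_{\mathrm{Aut}(F_n)}(S)\bigr)=N_{\mathrm{Out}(F_n)}(q(S))$. Hence it is enough to produce an element $\hat h\in\mathrm{Aut}(F_n)$ of odd prime order with $q(\hat h)\in N(f)$: Proposition~\ref{prop1} then gives $N_{\mathrm{Aut}(F_n)}(\hat h)=\mathrm{SAut}(F_n)$ (again $\det$ must be $1$), and therefore
\[
N(f)\ \supseteq\ N_{\mathrm{Out}(F_n)}(q(\hat h))\ =\ q\bigl(N_{\mathrm{Aut}(F_n)}(\hat h)\bigr)\ =\ q(\mathrm{SAut}(F_n))\ =\ \mathrm{SOut}(F_n).
\]

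To build such an $\hat h$ I would invoke Culler's realization theorem: $\langle f\rangle$ is realized by an order-$p$ automorphism $\tilde f$ of a finite connected graph $X$ with $\pi_1(X)\cong F_n$. If $\tilde f$ fixes a vertex $v$, then $\tilde f\in\mathrm{Aut}(X,v)$ projects under $\mathrm{Aut}(X,v)\to\mathrm{Aut}(F_n)$ to an automorphism $\hat f$ of order dividing $p$, which is nontrivial since $q(\hat f)=f\neq 1$; as $q(\hat f)=f\in N(f)$ we may take $\hat h=\hat f$, and we are done. If $\tilde f$ fixes no vertex, then---having no inversions---it fixes no edge, so $\langle\tilde f\rangle\cong\mathbb{Z}/p$ acts freely on $X$, the quotient map $X\to X/\langle\tilde f\rangle$ is a regular $p$-fold cover, and $F_n$ embeds as an index-$p$ normal subgroup of a free group $F_m$ with $m=1+(n-1)/p$, with $f$ the outer class of conjugation by a lift in $F_m$ of a generator of $F_m/F_n\cong\mathbb{Z}/p$. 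In a standard model $X$ is $p$ copies of a wedge of $m$ circles cyclically permuted along one distinguished edge-orbit, $\tilde f$ is the shift, and $f$ fixes the conjugacy class of the primitive element running once around that $p$-cycle; the aim in this case is to locate, inside $N(f)$, the image of a vertex-fixing automorphism of odd prime order and then apply the reduction above.

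The main obstacle is precisely this free-action case. Unlike the fixed-vertex case, here $f$ admits no finite-order lift to $\mathrm{Aut}(F_n)$ whatsoever---the phenomenon recorded in the Remark after Example~\ref{example1}---so $f$ itself cannot be pulled back through Proposition~\ref{prop1}, and one must instead manufacture a finite-order element inside the normal closure directly, for instance by writing an explicit product of conjugates of $f^{\pm1}$ equal to the class of a vertex-fixing automorphism of odd prime order, exploiting the extra graph symmetries of the standard model together with the $\tilde f$-invariant primitive conjugacy class. Making this construction work---presumably through the finer analysis of realizations of finite subgroups of $\mathrm{Out}(F_n)$ by subgroups of $\mathrm{Aut}(X)$ alluded to in the excerpt---is the technical heart; once a suitable $\hat h$ is produced, the proposition follows from Proposition~\ref{prop1} and the displayed chain of equalities.
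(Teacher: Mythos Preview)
Your reduction via $q$ and the case split into ``$f$ lifts to finite order in $\mathrm{Aut}(F_n)$'' versus ``$\langle\tilde f\rangle$ acts freely'' is exactly the paper's dichotomy, and your treatment of the liftable case is correct and matches the paper verbatim. The observation that $\det(\bar f)=1$ is forced for odd $p$ is also fine.

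The genuine gap is the free-action case: you correctly identify that this is the hard part and describe your desired target (an odd-prime-order $\hat h\in\mathrm{Aut}(F_n)$ with $q(\hat h)\in N(f)$), but you do not construct it. More importantly, the paper does \emph{not} try to produce such an $\hat h$ at all, and your chosen target makes the problem harder than necessary. Instead, the paper normalizes the free action to the rotation automorphism of the concrete graph $R_{p,m}$ (Proposition~\ref{Out}), picks the \emph{infinite-order} lift $\phi_{s_1,\tilde f}\in\mathrm{Aut}(F_n)$ from Proposition~\ref{relation}, and then runs the same commutator-type computation as in Case~\ref{p>=5, case1}: with
\[
\Phi_1=L_{x_{0,1}x_{1,1}}^{-1}\phi_{s_1,\tilde f}\,L_{x_{0,1}x_{1,1}}\,\phi_{s_1,\tilde f}^{-1},\qquad
\Phi_2=L_{x_{0,1}x_{1,1}}\,\phi_{s_1,\tilde f}\,L_{x_{0,1}x_{1,1}}^{-1}\,\phi_{s_1,\tilde f}^{-1},
\]
one checks directly that $\Phi_1\circ\Phi_2=L_{x_{0,1}x_{2,1}}$. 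Projecting, the image of this left multiplication lies in $N(f)$, and Lemma~\ref{left multi Out} finishes. The point is that the lift need not have finite order---only its image in $\mathrm{Out}(F_n)$ matters---so aiming for a finite-order $\hat h$ was a detour; what you actually need in $N(f)$ is (the image of) a single left multiplication, and the explicit basis of $\pi_1(R_{p,m},v_0)$ makes this a short computation valid for every odd prime $p\ge 3$.
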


\section{Notation}
We now introduce some notation.

\begin{defn}\label{defn}
Let $F_n = \langle x_1, x_2, \ldots, x_n \rangle$ be the free group of rank $n$ generated by $x_1, x_2, \ldots, x_n$.
\begin{align*}
    L_{x_i x_j} & = \text{the automorphism that takes $x_i$ to $x_jx_i$ and it fixes $x_k$ for $k\neq i$}, \\
    R_{x_i x_j} & = \text{the automorphism that takes $x_i$ to $x_ix_j$ and it fixes $x_k$ for $k\neq i$}, \\
    C_{x_i x_j} & = \text{the automorphism that takes $x_i$ to $x_jx_ix_j^{-1}$ and it fixes $x_k$ for $k\neq i$}, \\
    P_{x_i x_j} & = \text{the automorphism that takes $x_i$ to $x_j$, $x_j$ to $x_i$ and it fixes $x_k$ for $k\neq i, j$}, \\
    I_{x_i} & = \text{the automorphism that takes $x_i$ to $x_i^{-1}$ and it fixes $x_k$ for $k\neq i$}.
\end{align*}
We call $L_{x_i x_j}$ a left multiplication, $R_{x_i x_j}$ a right multiplication, $C_{x_i x_j}$ a conjugation, $P_{x_i x_j}$ a permutation, and $I_{x_i}$ an inversion map. 
\end{defn}
\begin{remark}The inverse of $L_{x_i x_j}$ is the automorphism that takes $x_i$ to $x_j^{-1}x_i$ and it fixes $x_k$ for $k\neq i$. Similarly, the inverse of $R_{x_i x_j}$ is the automorphism that takes $x_i$ to $x_ix_j^{-1}$ and it fixes the other generators. The elements $P_{x_i x_j}$ and $I_{x_i}$ are order $2$ automorphisms whose inverses are themselves.
\end{remark}
\begin{defn}
The kernel of the $\mathrm{GL}_n(\mathbb{Z})$ representation of $\mathrm{Aut}(F_n)$ is called the Torelli subgroup of $\mathrm{Aut}(F_n)$, denoted $\text{IA}_n$. Similarly, the Torelli subgroup of $\mathrm{Out}(F_n)$ is denoted as $\text{IO}_n$.
\end{defn}

\section{Proof of the $\mathrm{Aut}(F_n)$ case}
The starting point for our proof is the following proposition and lemmas. By saying an automorphism $\tilde{f}$ of a based graph $(X,v_0)$ is a \textit{realization} of $f\in\mathrm{Aut}(F_n)$, we mean the cyclic group generated by $f$ is realized by the cyclic group generated by $\tilde{f}$. 

\begin{prop}
\label{collapse}
Let $f\in\mathrm{Aut}(F_n)$ have prime order $p$. Then $f$ can be realized by an automorphism $\tilde{f}$ of a based graph $(X, v_0)$ such that the following hold:
\begin{enumerate}
    \item the automorphism $\tilde{f}$ fixes all vertices of $X$; and
    \item the only edges fixed by $\tilde{f}$ are loops.
\end{enumerate}
\end{prop}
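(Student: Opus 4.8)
The plan is to start from a realization furnished by Culler and then pass to one that is minimal. Concretely: by Culler's theorem $\langle f\rangle$ is realized by $\langle\tilde f\rangle$ for some order-$p$ automorphism $\tilde f$ of a based graph $(X,v_0)$, with $\tilde f(v_0)=v_0$; among all such realizations I would fix attention on one for which the number of edges of $X$ is as small as possible (a minimum exists, since realizations exist and the edge count is a positive integer). The only operation needed to produce competitors is the collapse of an $\tilde f$-invariant subforest $T\subset X$: the quotient map $X\to X/T$ is an $\tilde f$-equivariant homotopy equivalence, so $X/T$ with the induced automorphism is again a realization of $\langle f\rangle$, now with $|T|$ fewer edges; the induced automorphism still has order exactly $p$ because $\tilde f^p=\mathrm{id}_X$, and the image of $v_0$ is still a fixed basepoint. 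I would then show that a minimal realization automatically satisfies $(1)$ and $(2)$.

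Condition $(2)$ is immediate: if $\tilde f$ fixed an edge $e$ that is not a loop, then, since graph automorphisms have no inversions, it would fix both (distinct) endpoints of $e$, so $\{e\}$ would be an invariant forest, and collapsing it would contradict minimality. Hence in the minimal realization every fixed edge is a loop.

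For condition $(1)$, suppose toward a contradiction that $\{v_1,\dots,v_p\}$ is a free $\tilde f$-orbit of vertices; note that $v_0$ is not among the $v_i$, since $v_0$ is fixed. As $X$ is connected and has an edge, $v_1$ is incident to some edge $e_1$, with other endpoint $u_1$; set $e_i=\tilde f^{i-1}(e_1)$, an edge from $v_i$ to $u_i:=\tilde f^{i-1}(u_1)$. The orbit $\{e_1,\dots,e_p\}$ has size exactly $p$ (size $1$ would make $e_1$ fixed, hence $v_1$ fixed). If $u_1\notin\{v_1,\dots,v_p\}$, then no $u_i$ lies in $\{v_1,\dots,v_p\}$, so each $v_i$ has degree $1$ in the subgraph $\{e_1,\dots,e_p\}$; every edge of that subgraph is incident to a degree-$1$ vertex, so it is an invariant forest, and collapsing it contradicts minimality. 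Therefore every edge at $v_1$ has its other endpoint in $\{v_1,\dots,v_p\}$, and applying powers of $\tilde f$, every edge incident to any $v_i$ has both endpoints in $\{v_1,\dots,v_p\}$. Then no edge joins this set to its complement, so by connectedness of $X$ the $v_i$ exhaust $V(X)$ --- contradicting $v_0\in V(X)\setminus\{v_1,\dots,v_p\}$. Thus $\tilde f$ fixes every vertex, and the minimal realization satisfies both $(1)$ and $(2)$.

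I expect the substantive point to be the last paragraph: the dichotomy, at a hypothetical free vertex orbit, between an edge-orbit that \textit{leaves} the orbit --- eliminated by a forest collapse and minimality --- and one that does not --- which, together with connectedness and the presence of the fixed basepoint $v_0$, is impossible. The remaining ingredients (that equivariant forest collapses are homotopy equivalences preserving the order of $\tilde f$ and the basepoint, and that minimal realizations exist) are routine bookkeeping.
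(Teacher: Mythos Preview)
Your proof is correct and follows the same strategy as the paper's: invoke Culler's realization, pass to a minimal one, and derive both conditions by equivariantly collapsing an invariant forest. The only differences are cosmetic. The paper minimizes the number of \emph{vertices} rather than edges; since $v_0$ is fixed and $X$ is connected, there is then an edge $e$ joining a fixed vertex to a non-fixed one, and the $\tilde f$-orbit of $e$ is immediately a star (fixed center, $p$ distinct leaves), hence a tree whose collapse reduces the vertex count. Your dichotomy at a free vertex orbit---either some incident edge leaves the orbit (giving an invariant forest to collapse) or none does (contradicting connectedness and the presence of $v_0$)---reaches the same conclusion with one extra case split. Neither route buys anything the other does not; the paper's is marginally more direct because connectedness hands you the right edge outright, while yours works uniformly with an arbitrary edge at a non-fixed vertex.
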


\begin{proof}
Culler \cite{MC} proved that an element $f\in\mathrm{Aut}(F_n)$ can be realized by an automorphism $\tilde{f}$ of a based graph $(X, v_0)$. Pick such an $(X, v_0)$ with as few vertices as possible. We claim that $\tilde{f}$ fixes all vertices of $X$. Indeed, if it acts nontrivially on some vertex, then since $\tilde{f}(v_0)=v_0$, there must be an edge $e$ from a vertex $v_1$ to a vertex $v_2$ such that $\tilde{f}$ fixes $v_1$ but not $v_2$. We claim that the $\cup_{k=1}^{p}\tilde{f}^{k}(e)$ is a tree. In fact, since each $\tilde{f}^k(e)$ is an edge with the same endpoint $v_1=f^k(v_1)$ for $1\leq k\leq p$, it is enough to prove the other endpoints $f^k(v_2)$ with $1\leq k\leq p$ are distinct. Indeed, they are all distinct since the order $p$ cyclic group has no nontrivial subgroups.

Then, we can collapse the tree  $\cup_{k=1}^{p}\tilde{f}^{k}(e)$ without changing the homotopy type to get a new graph with fewer vertices. This contradicts the minimality of the number of vertices of $X$. For such an $(X, v_0)$, the only edges fixed by $\tilde{f}$ are loops since if there is a fixed edge with distinct endpoints, we can always collapse it to one point, contradicting the minimality of the number of vertices of $X$ as well. 
\end{proof}
We call an automorphism $\tilde{f}$ of a based graph $(X, v_0)$ \textit{good} if it satisfies the conclusion of Proposition \ref{collapse}.

\begin{remark}
Notice that the order of $f$ being prime is essential. For instance, let $f$ be an order $4$ element in $\mathrm{Aut}(F_4)$ such that
\begin{align*}
f: &x_1 \mapsto x_2^{-1}  \\
     &x_2 \mapsto x_1\\
     &x_3 \mapsto x_2x_4x_2^{-1}\\
     &x_4 \mapsto x_3
\end{align*}
Let $X$ be the graph in Figure \ref{order 4}. This graph has oriented edges $\ell_1,\ell_2$ and $s_1,\ldots,s_4$. Let  $\tilde{f}\in\mathrm{Aut}(X, v_0)$ be the automorphism that on these edges satisfies
\[ \tilde{f}(\ell_m)=\ell_{m+1}, \quad \tilde{f}(s_i)=s_{i+1}^{-1}.\]
Consider the following basis of $\pi_1(X,v_0)$:
\[x_1=s_1^{-1}s_3, \quad x_2=s_4s_2^{-1},\quad x_3=s_3\ell_1 s_3^{-1}, \quad x_4=s_2\ell_2 s_2^{-1}.\]
Thus, the action of $\tilde{f}_*$ on $\pi_1(X,v_0)$ satisfies:
\[\tilde{f}_*(x_1)=x_2^{-1}, \quad \tilde{f}_*(x_2)=x_1,\quad \tilde{f}_*(x_3)=x_2x_4x_2^{-1}, \quad \tilde{f}_*(x_4)=x_3.\]
It realizes the map $f$ defined above. We can see that the map $\tilde{f}$ has a fixed point $v_0$ while $\cup_{k=1}^{4}\tilde{f}^{k}(s_1)$ is not a tree.
\begin{figure}[htbp]
    \centering
    \includegraphics[scale=0.55]{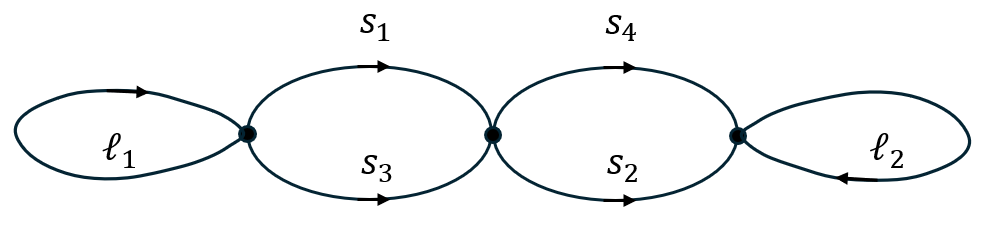}
    \caption{\label{order 4}}
\end{figure} 
\end{remark}

The following lemmas reduce computing the normal closure of $f$ to finding some specific elements inside $N(f)$. Denote the image of $f$ in  $\mathrm{GL}_n(\mathbb{Z})$ by $\bar{f}$.

\begin{lemma}\label{left multi}
Let $f\in\mathrm{Aut}(F_n)$.  If there exists a left multiplication $L_{x_i x_j}$ in $N(f)$, then
\[
    N(f) = 
    \begin{cases}
        \mathrm{SAut}(F_n) & \text{if det}(\bar{f}) = 1, \\
        \mathrm{Aut}(F_n) & \text{if det}(\bar{f}) = -1.
    \end{cases}
\]
\end{lemma}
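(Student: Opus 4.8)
\textbf{Proof proposal for Lemma \ref{left multi}.}

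The plan is to show that once a single left multiplication $L_{x_i x_j}$ lies in the normal subgroup $N(f)$, then \emph{all} elementary Nielsen transformations of the appropriate type lie in $N(f)$, which forces $N(f)$ to contain $\mathrm{SAut}(F_n)$; the determinant then decides whether we also pick up an orientation-reversing element. First I would record the standard fact that $\mathrm{SAut}(F_n)$ is generated by the left multiplications $L_{x_a x_b}$ (equivalently, by all the $R_{x_a x_b}$, since these are conjugate to left multiplications by permutations), for $n\geq 3$; this is classical (Nielsen). So it suffices to produce every $L_{x_a x_b}$, with $a\neq b$, inside $N(f)$.

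The key step is a conjugation/commutator bootstrap. Starting from $L_{x_i x_j}\in N(f)$ and using that $N(f)$ is normal in $\mathrm{Aut}(F_n)$, I would conjugate by permutations $P_{x_a x_b}$ to move the pair of indices $(i,j)$ to an arbitrary ordered pair $(a,b)$ with $a\neq b$: conjugating $L_{x_i x_j}$ by a permutation sending $x_i\mapsto x_a$, $x_j\mapsto x_b$ yields $L_{x_a x_b}$. This is where $n\geq 3$ is used: with at least three letters the symmetric group acts transitively enough on ordered pairs of distinct indices (and one has the room to realize the needed permutation as an automorphism), so a single $L_{x_i x_j}$ propagates to the full set $\{L_{x_a x_b} : a\neq b\}$. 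Since these generate $\mathrm{SAut}(F_n)$, we conclude $\mathrm{SAut}(F_n)\subseteq N(f)$. Conversely $N(f)\subseteq\mathrm{Aut}(F_n)$ always, and if $\det(\bar f)=1$ then $f\in\mathrm{SAut}(F_n)$, so $N(f)\subseteq\mathrm{SAut}(F_n)$, giving equality $N(f)=\mathrm{SAut}(F_n)$.

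For the case $\det(\bar f)=-1$: here $f$ itself is an orientation-reversing element of $\mathrm{Aut}(F_n)$ lying in $N(f)$. Combined with $\mathrm{SAut}(F_n)\subseteq N(f)$ and the fact that $\mathrm{SAut}(F_n)$ has index $2$ in $\mathrm{Aut}(F_n)$, any element outside $\mathrm{SAut}(F_n)$ can be written as $f\cdot s$ for some $s\in\mathrm{SAut}(F_n)$, hence lies in $N(f)$. Therefore $N(f)=\mathrm{Aut}(F_n)$.

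The main obstacle I anticipate is the careful justification of the permutation step: one must make sure that the conjugating elements used to move indices around genuinely lie in $\mathrm{Aut}(F_n)$ (they do — permutations $P_{x_a x_b}$ are honest automorphisms) and that conjugation by them transforms $L_{x_i x_j}$ exactly into the claimed $L_{x_a x_b}$ rather than into some twisted variant (a short direct computation on generators). A secondary point worth stating cleanly is the generation statement "$\mathrm{SAut}(F_n)$ is generated by the left multiplications for $n\geq 3$" and the reduction between left and right multiplications via permutations and inversions; these are standard but should be cited. No delicate graph-theoretic input is needed for this lemma — it is purely a fact about the Nielsen generators — so the proof should be short once the generation statement is invoked.
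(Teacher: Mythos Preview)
Your approach is correct and is a genuinely different (and arguably more direct) route than the paper's. You argue that the $\mathrm{Aut}(F_n)$-conjugates of a single $L_{x_ix_j}$ already contain a generating set for $\mathrm{SAut}(F_n)$, and then finish with the index-$2$ observation. The paper instead passes through the Torelli group: from $L_{x_1x_2}\in N(f)$ it extracts $R_{x_1x_2}=I_{x_1}L_{x_1x_2}^{-1}I_{x_1}$ and hence $C_{x_1x_2}=L_{x_1x_2}R_{x_1x_2}^{-1}$, invokes Magnus's theorem that $C_{x_1x_2}$ normally generates $IA_n$, and then uses that $\bar L_{x_1x_2}$ normally generates $\mathrm{SL}_n(\mathbb{Z})$ to conclude via the extension $1\to IA_n\to N(f)\to \overline{N(f)}\to 1$. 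Your argument avoids Magnus's theorem entirely; the paper's argument, on the other hand, establishes the template $IA_n\subset N(f)$ plus control of $\overline{N(f)}\subset\mathrm{GL}_n(\mathbb{Z})$ that is reused verbatim in the later Lemmas~\ref{level 3} and~\ref{level 4}.

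Two small points to tighten. First, right multiplications are not conjugate to left multiplications by \emph{permutations}; the correct identity is $R_{x_ix_j}=I_{x_i}L_{x_ix_j}^{-1}I_{x_i}$, i.e.\ conjugation by an inversion (which is still in $\mathrm{Aut}(F_n)$, so your normal-closure argument is unaffected). Second, the bald statement ``$\mathrm{SAut}(F_n)$ is generated by the $L_{x_ax_b}$'' is not the standard formulation and would need its own justification; the clean statement to cite is Gersten's, that $\mathrm{SAut}(F_n)$ is generated by all $L_{x_ax_b}$ and $R_{x_ax_b}$, and then observe (as you do at the end) that both families lie in $N(f)$ once a single $L_{x_ix_j}$ does.
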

\begin{proof}
Any left multiplications $L_{x_ix_j}$ and $L_{x_mx_{\ell}}$ are conjugated by a $\sigma\in\mathrm{Aut}(F_n)$ that permutes the generators with $\sigma(x_i)=x_m$ and $\sigma(x_j)=x_l$. Without loss of generality, assume $L_{x_1x_2}\in N(f)$. Then $L_{x_1x_2}^{-1}\in N(f)$ as well which implies the right multiplication $R_{x_1x_2} = I_{x_1} L_{x_1x_2}^{-1} I_{x_1}\in N(f)$. The conjugation map $C_{x_1x_2}$ can be written as $C_{x_1x_2} = L_{x_1x_2} R_{x_1x_2}$ which is in $N(f)$. Magnus \cite{WM} proved that the Torelli subgroup $IA_n$ of  $\mathrm{Aut}(F_n)$ is $\mathrm{Aut}(F_n)$-normally generated by $C_{x_1x_2}$.  Day--Putman \cite{DP} give a modern proof of this theorem.  Hence, we can conclude that $N(f)$ contains $IA_n$. Notice that $\bar{L}_{x_1x_2}$ normally generates SL$_n(\mathbb{Z})$. Thus we have the group extension
\[
    1 \rightarrow IA_n \rightarrow N(f) \rightarrow \mathrm{SL}_n(\mathbb{Z}) \rightarrow 1
\]
if $\det(\bar{f}) = 1$. The group $\mathrm{GL}_n(\mathbb{Z})$ is generated by $\mathrm{SL}_n(\mathbb{Z})$ and any determinant $-1$ element. Then we have the group extension
\[
    1 \rightarrow IA_n \rightarrow N(f) \rightarrow \mathrm{GL}_n(\mathbb{Z}) \rightarrow 1
\]
if $\det(\bar{f}) = -1$. The lemma follows.
\end{proof}
We also have the following $\mathrm{Out}(F_n)$-analogue of Lemma \ref{left multi}.

\begin{lemma}\label{left multi Out}
Let $f\in\mathrm{Out}(F_n)$.  If there exists a left multiplication $L_{x_i x_j}$ in $N(f)$, then
\[
    N(f) = 
    \begin{cases}
        \mathrm{SOut}(F_n) & \text{if det}(\bar{f}) = 1, \\
        \mathrm{Out}(F_n) & \text{if det}(\bar{f}) = -1.
    \end{cases}
\]
\end{lemma}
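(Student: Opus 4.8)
The plan is to follow the proof of Lemma \ref{left multi} almost verbatim, replacing Magnus's theorem by its pushforward along the projection $\pi\colon\mathrm{Aut}(F_n)\twoheadrightarrow\mathrm{Out}(F_n)$. Throughout, $L_{x_ix_j}$, $R_{x_ix_j}$, $C_{x_ix_j}$, $I_{x_i}$, and $P_{x_ix_j}$ denote the images in $\mathrm{Out}(F_n)$ of the corresponding automorphisms of $F_n$, and all the identities among them that we use — any two left multiplications are conjugate by a permutation, $R_{x_1x_2}=I_{x_1}L_{x_1x_2}^{-1}I_{x_1}$, and $C_{x_1x_2}=L_{x_1x_2}R_{x_1x_2}$ — hold in $\mathrm{Out}(F_n)$ because they already hold in $\mathrm{Aut}(F_n)$. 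So, exactly as before, after conjugating by a permutation so that the relevant indices are $1,2$, from a single left multiplication in $N(f)$ we successively obtain $L_{x_1x_2},\ L_{x_1x_2}^{-1},\ R_{x_1x_2},\ C_{x_1x_2}\in N(f)$.

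The key step is to deduce $IO_n\subseteq N(f)$. For this I would use two facts: that $IA_n$ is the normal closure of $C_{x_1x_2}$ in $\mathrm{Aut}(F_n)$ (Magnus; see the proof of Lemma \ref{left multi}), and that $\pi(IA_n)=IO_n$. The latter holds because $\mathrm{Inn}(F_n)\subseteq IA_n$ and because any lift to $\mathrm{Aut}(F_n)$ of an element of $IO_n$ has trivial image in $\mathrm{GL}_n(\mathbb{Z})$ — the composite $\mathrm{Aut}(F_n)\to\mathrm{Out}(F_n)\to\mathrm{GL}_n(\mathbb{Z})$ being the standard representation — hence lies in $IA_n$. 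Since the image under a surjection of the normal closure of a set is the normal closure of the image of that set, $IO_n=\pi(IA_n)$ is the normal closure in $\mathrm{Out}(F_n)$ of $C_{x_1x_2}$; as $N(f)$ is normal and contains $C_{x_1x_2}$, we conclude $IO_n\subseteq N(f)$.

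To finish, consider the image $\overline{N(f)}$ of $N(f)$ under $\mathrm{Out}(F_n)\to\mathrm{GL}_n(\mathbb{Z})$, which is surjective with kernel $IO_n$. This image contains $\bar f$ and the elementary matrix $\bar L_{x_1x_2}$, and $\bar L_{x_1x_2}$ normally generates $\mathrm{SL}_n(\mathbb{Z})$; hence $\overline{N(f)}=\mathrm{SL}_n(\mathbb{Z})$ when $\det(\bar f)=1$ and $\overline{N(f)}=\mathrm{GL}_n(\mathbb{Z})$ when $\det(\bar f)=-1$, using that $\mathrm{GL}_n(\mathbb{Z})$ is generated by $\mathrm{SL}_n(\mathbb{Z})$ together with any determinant $-1$ element. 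Since $IO_n\subseteq N(f)$ is the full kernel and $N(f)$ surjects onto $\overline{N(f)}$, the subgroup $N(f)$ is precisely the preimage of $\overline{N(f)}$, namely $\mathrm{SOut}(F_n)$ in the first case and $\mathrm{Out}(F_n)$ in the second. The step most likely to need care is the identification $\pi(IA_n)=IO_n$ and the compatibility of normal closures with $\pi$; once these are spelled out, the argument is otherwise identical to that of Lemma \ref{left multi}, so I do not anticipate a serious obstacle.
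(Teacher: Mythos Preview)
Your proposal is correct and follows exactly the approach intended by the paper, whose proof simply reads ``Identical to the proof of Lemma~\ref{left multi}.'' The only extra care you take---spelling out that $\pi(IA_n)=IO_n$ and that normal closures push forward along the surjection $\pi$---is precisely what is implicit in that one-line reference.
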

\begin{proof}
Identical to the proof of Lemma \ref{left multi}.
\end{proof}
We make the following definition.

\begin{defn}
The projection $\mathbb{Z}\rightarrow\mathbb{Z}/\ell\mathbb{Z}$ induces a map $\pi_{\ell}: \mathrm{SL}_n(\mathbb{Z})\rightarrow\mathrm{SL}_n(\mathbb{Z}/\ell\mathbb{Z})$.  The kernel of  $\pi_{\ell}$, denoted $\Gamma_n(\ell)$, is called the level $\ell$ congruence subgroup of  $\mathrm{SL}_n(\mathbb{Z})$.
\end{defn}

Denote the image of $G<\mathrm{Aut}(F_n)$ in $\mathrm{GL}_n(\mathbb{Z})$ by $\bar{G}$. We then have:

\begin{lemma}\label{level 3}
Let $f\in\mathrm{Aut}(F_n)$. Assume the following three conditions are satisfied:
\begin{enumerate}
\item the Torelli subgroup $IA_n\subset N(f)$,
\item the level $3$ subgroup $\Gamma_n(3)\subset\overline{N(f)}$,  and
\item the group $\mathrm{Im}(\overline{N(f)}\rightarrow\mathrm{PSL}_n(\mathbb{Z}/3\mathbb{Z}))$ is nontrivial.
\end{enumerate}
Then, we have
\[
    N(f) = 
    \begin{cases}
        \mathrm{SAut}(F_n) & \text{if det}(\bar{f}) = 1, \\
        \mathrm{Aut}(F_n) & \text{if det}(\bar{f}) = -1.
    \end{cases}
\]
\end{lemma}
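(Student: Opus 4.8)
The strategy is to promote hypotheses (2) and (3) to the single statement $\mathrm{SL}_n(\mathbb{Z})\subseteq\overline{N(f)}$, after which hypothesis (1) together with the congruence exact sequences pins down $N(f)$ exactly; alternatively, once $\mathrm{SL}_n(\mathbb{Z})\subseteq\overline{N(f)}$ and $IA_n\subseteq N(f)$ one immediately gets a left multiplication in $N(f)$ and can quote Lemma~\ref{left multi}.

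First I would note that $N(f)\trianglelefteq\mathrm{Aut}(F_n)$ forces $\overline{N(f)}\trianglelefteq\mathrm{GL}_n(\mathbb{Z})$, hence $M:=\overline{N(f)}\cap\mathrm{SL}_n(\mathbb{Z})\trianglelefteq\mathrm{SL}_n(\mathbb{Z})$. By hypothesis (2), $M\supseteq\Gamma_n(3)=\ker\pi_3$, so $M=\pi_3^{-1}(\pi_3(M))$ and $\pi_3(M)\trianglelefteq\mathrm{SL}_n(\mathbb{Z}/3\mathbb{Z})$. Since $n\geq 3$, the group $\mathrm{PSL}_n(\mathbb{Z}/3\mathbb{Z})$ is a nonabelian simple group, so the only normal subgroups of $\mathrm{SL}_n(\mathbb{Z}/3\mathbb{Z})$ are the trivial group, the center (which is $\{\pm I\}$, and is trivial when $n$ is odd), and the whole group. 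Hypothesis (3) says the image of $\overline{N(f)}$ in $\mathrm{PSL}_n(\mathbb{Z}/3\mathbb{Z})$ is nontrivial; this rules out $\pi_3(M)$ being trivial or central, leaving $\pi_3(M)=\mathrm{SL}_n(\mathbb{Z}/3\mathbb{Z})$. Since $M$ contains $\ker\pi_3$ and surjects onto $\mathrm{SL}_n(\mathbb{Z}/3\mathbb{Z})$, we get $M=\mathrm{SL}_n(\mathbb{Z})$, i.e.\ $\mathrm{SL}_n(\mathbb{Z})\subseteq\overline{N(f)}$. If $\det(\bar f)=-1$ then $\overline{N(f)}$ is not contained in $\mathrm{SL}_n(\mathbb{Z})$, but $M$ has index $2$ in $\overline{N(f)}$, so the image of $M$ in the nonabelian simple group $\mathrm{PSL}_n(\mathbb{Z}/3\mathbb{Z})$ has index at most $2$ and hence is everything; the argument is otherwise unchanged.

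Finally I would conclude case by case. If $\det(\bar f)=1$, then every conjugate of $f$ lies in $\mathrm{SAut}(F_n)$, so $\overline{N(f)}\subseteq\mathrm{SL}_n(\mathbb{Z})$ and therefore $\overline{N(f)}=\mathrm{SL}_n(\mathbb{Z})$; combined with $IA_n\subseteq N(f)$ and the exact sequence $1\to IA_n\to\mathrm{SAut}(F_n)\to\mathrm{SL}_n(\mathbb{Z})\to 1$, this gives $N(f)=\mathrm{SAut}(F_n)$. If $\det(\bar f)=-1$, then $\overline{N(f)}$ contains $\mathrm{SL}_n(\mathbb{Z})$ together with a determinant $-1$ matrix and hence equals $\mathrm{GL}_n(\mathbb{Z})$; combined with $IA_n\subseteq N(f)$ and $1\to IA_n\to\mathrm{Aut}(F_n)\to\mathrm{GL}_n(\mathbb{Z})\to 1$, this gives $N(f)=\mathrm{Aut}(F_n)$.

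The main obstacle is the middle step: correctly enumerating the normal subgroups of $\mathrm{SL}_n(\mathbb{Z}/3\mathbb{Z})$, handling the center, and in the $\det(\bar f)=-1$ case interpreting hypothesis (3) for a group $\overline{N(f)}$ that strictly contains $\mathrm{SL}_n(\mathbb{Z})$. The remaining steps are formal diagram-chasing with the level-$3$ congruence exact sequence and the description of $IA_n$ already invoked in Lemma~\ref{left multi}.
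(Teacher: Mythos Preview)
Your approach is the same as the paper's: reduce to showing $\mathrm{SL}_n(\mathbb{Z})\subseteq\overline{N(f)}$ by pushing $M=\overline{N(f)}\cap\mathrm{SL}_n(\mathbb{Z})$ through $\pi_3$ and using simplicity of $\mathrm{PSL}_n(\mathbb{Z}/3\mathbb{Z})$, then finish with hypothesis~(1) and the exact sequence for $IA_n$, exactly as in Lemma~\ref{left multi}. Your handling of the $\det(\bar f)=-1$ case via the index-$2$ observation is a nice explicit touch.

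There is one genuine gap, at the point you yourself flag as ``the main obstacle.'' The implication ``$\mathrm{PSL}_n(\mathbb{Z}/3\mathbb{Z})$ simple $\Rightarrow$ the only normal subgroups of $\mathrm{SL}_n(\mathbb{Z}/3\mathbb{Z})$ are $\{I\}$, the center, and the whole group'' does \emph{not} follow from simplicity alone when $n$ is even. Simplicity only tells you that a normal subgroup $N\trianglelefteq\mathrm{SL}_n(\mathbb{Z}/3\mathbb{Z})$ either lies in the center or surjects onto $\mathrm{PSL}_n(\mathbb{Z}/3\mathbb{Z})$; in the latter case you still need to rule out $N\cap Z=\{I\}$, i.e.\ that $N$ is a complement to the center. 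The paper closes this gap by invoking (with a reference) the fact that the central extension
\[
1\longrightarrow\{\pm I\}\longrightarrow\mathrm{SL}_n(\mathbb{Z}/3\mathbb{Z})\longrightarrow\mathrm{PSL}_n(\mathbb{Z}/3\mathbb{Z})\longrightarrow 1
\]
does not split for even $n$: if $\pi_3(M)\to\mathrm{PSL}_n(\mathbb{Z}/3\mathbb{Z})$ were an isomorphism, its inverse would split the sequence. Hence the kernel meets the center nontrivially, forcing $\pi_3(M)=\mathrm{SL}_n(\mathbb{Z}/3\mathbb{Z})$. Once you insert this non-splitting input, your argument is complete and matches the paper's.
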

\begin{proof}
Let $G = \text{Im}(\overline{N(f)} \cap \mathrm{SL}_n(\mathbb{Z}) \rightarrow \mathrm{SL}_n(\mathbb{Z}/3\mathbb{Z}))$ and $\hat{G} = \text{Im}(G \rightarrow \mathrm{PSL}_n(\mathbb{Z}/3\mathbb{Z}))$ where the maps are the natural projection maps.  Then we have the short exact sequence:
\[  1 \rightarrow \Gamma_n(3) \rightarrow \overline{N(f)} \cap \mathrm{SL}_n(\mathbb{Z}) \rightarrow G \rightarrow 1\]
Our goal is to prove $G = \mathrm{SL}_n(\mathbb{Z}/3\mathbb{Z})$. This assertion implies that $\overline{N(f)}$ contains $\mathrm{SL}_n(\mathbb{Z})$ and the lemma will follow as in the proof of Lemma \ref{left multi}. 

According to hypothesis $3$, we have that $\hat{G}$ is a nontrivial normal subgroup in $\mathrm{PSL}_n(\mathbb{Z}/3\mathbb{Z})$. Since $\mathrm{PSL}_n(\mathbb{Z}/3\mathbb{Z})$ is simple for $n \geq 3$, see \cite{CJ}, we conclude that $\hat{G} = \mathrm{PSL}_n(\mathbb{Z}/3\mathbb{Z})$, making the map $G\rightarrow \mathrm{PSL}_n(\mathbb{Z}/3\mathbb{Z})$ a surjection. 

Next, we consider the center $Z(\mathrm{SL}_n(\mathbb{Z}/3\mathbb{Z}))$ of $\mathrm{SL}_n(\mathbb{Z}/3\mathbb{Z})$. We have
\[Z(\mathrm{SL}_n(\mathbb{Z}/3\mathbb{Z}))=\{\lambda I|\lambda^n \equiv 1 \text{ mod } 3\} =\begin{cases}
        \{I\} & \text{if }n \text{ is odd}, \\
         \{\pm I\} & \text{if }n  \text{ is even}.
    \end{cases}.\]
When $n$ is odd, $\mathrm{SL}_n(\mathbb{Z}/3\mathbb{Z})\cong \mathrm{PSL}_n(\mathbb{Z}/3\mathbb{Z})$. Hence, in this case, $G = \mathrm{SL}_n(\mathbb{Z}/3\mathbb{Z})$. When $n$ is even, the projection $ \mathrm{SL}_n(\mathbb{Z}/3\mathbb{Z})\rightarrow  \mathrm{PSL}_n(\mathbb{Z}/3\mathbb{Z})$ does not split, see \cite{KC}. Since $G\rightarrow \mathrm{PSL}_n(\mathbb{Z}/3\mathbb{Z})$is a surjection, if $G\rightarrow \mathrm{PSL}_n(\mathbb{Z}/3\mathbb{Z})$ was an injection, then it would be an isomorphism. Its inverse $\mathrm{PSL}_n(\mathbb{Z}/3\mathbb{Z})\rightarrow G\subset\mathrm{SL}_n(\mathbb{Z}/3\mathbb{Z})$ would be a splitting. Contradiction! Therefore, we conclude that $G\rightarrow \mathrm{PSL}_n(\mathbb{Z}/3\mathbb{Z})$ is not an injection. The kernel $K$ of $G\rightarrow \mathrm{PSL}_n(\mathbb{Z}/3\mathbb{Z})$ is a nontrivial subgroup of $Z(\mathrm{SL}_n(\mathbb{Z}/3\mathbb{Z}))\cong \mathbb{Z}/2\mathbb{Z}$, so $K=Z(\mathrm{SL}_n(\mathbb{Z}/3\mathbb{Z}))$. This implies $G = \mathrm{SL}_n(\mathbb{Z}/3\mathbb{Z})$ when $n $ is even.
\end{proof}

We have a similar lemma for the situation when level $4$ subgroup $\Gamma_n(4)\subset\overline{N(f)}$. We need to introduce a short exact sequence first.

\begin{prop}\label{ses}
Let $M^0_n(\mathbb{Z}/2\mathbb{Z})=\{ \text{trace zero matrices over }\mathbb{Z}/2\mathbb{Z}\}$. Then we have the short exact sequence
\[
1 \rightarrow M^0_n(\mathbb{Z}/2\mathbb{Z}) \rightarrow \mathrm{SL}_n(\mathbb{Z}/4\mathbb{Z}) \rightarrow \mathrm{SL}_n(\mathbb{Z}/2\mathbb{Z}) \rightarrow 1.
\]
\end{prop}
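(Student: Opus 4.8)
The plan is to build the map $\mathrm{SL}_n(\mathbb{Z}/4\mathbb{Z}) \to \mathrm{SL}_n(\mathbb{Z}/2\mathbb{Z})$ directly from reduction mod $2$, identify its kernel concretely, and check that kernel is isomorphic as a group to $(M^0_n(\mathbb{Z}/2\mathbb{Z}),+)$. First I would observe that the projection $\mathbb{Z}/4\mathbb{Z} \to \mathbb{Z}/2\mathbb{Z}$ induces a ring homomorphism, hence a group homomorphism $\pi : \mathrm{SL}_n(\mathbb{Z}/4\mathbb{Z}) \to \mathrm{GL}_n(\mathbb{Z}/2\mathbb{Z})$; since determinants are compatible with reduction, the image lands in $\mathrm{SL}_n(\mathbb{Z}/2\mathbb{Z})$. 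Surjectivity of $\pi$ is the statement that every element of $\mathrm{SL}_n(\mathbb{Z}/2\mathbb{Z})$ lifts to $\mathrm{SL}_n(\mathbb{Z}/4\mathbb{Z})$; I would get this either by lifting elementary generators (transvections and permutation-type matrices, which all have obvious determinant-$1$ lifts) since these generate $\mathrm{SL}_n(\mathbb{Z}/2\mathbb{Z})$, or by citing strong approximation / surjectivity of $\mathrm{SL}_n(\mathbb{Z}) \to \mathrm{SL}_n(\mathbb{Z}/4\mathbb{Z})$ combined with $\mathrm{SL}_n(\mathbb{Z}) \to \mathrm{SL}_n(\mathbb{Z}/2\mathbb{Z})$.

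Next I would analyze the kernel $K = \ker \pi$. An element of $K$ has the form $A = I + 2B$ where $B$ is an $n\times n$ matrix with entries in $\mathbb{Z}/4\mathbb{Z}$; but since $2B$ only depends on $B \bmod 2$, we may and do take $B \in M_n(\mathbb{Z}/2\mathbb{Z})$, and this representation is unique. The map $B \mapsto I + 2B$ is a group isomorphism from $(M_n(\mathbb{Z}/2\mathbb{Z}),+)$ onto $\{I + 2B\} \subset \mathrm{GL}_n(\mathbb{Z}/4\mathbb{Z})$: indeed $(I+2B)(I+2C) = I + 2(B+C) + 4BC = I + 2(B+C)$ in $\mathbb{Z}/4\mathbb{Z}$. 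The remaining point is to pin down which $B$ give determinant $1$. Expanding $\det(I + 2B)$ over $\mathbb{Z}/4\mathbb{Z}$, every term in the Leibniz expansion involving two or more off-diagonal-from-identity factors carries a factor $4 \equiv 0$, so $\det(I + 2B) \equiv 1 + 2\,\mathrm{tr}(B) \pmod 4$. Hence $\det(A) = 1$ in $\mathbb{Z}/4\mathbb{Z}$ if and only if $2\,\mathrm{tr}(B) \equiv 0 \pmod 4$, i.e. $\mathrm{tr}(B) \equiv 0 \pmod 2$, i.e. $B \in M^0_n(\mathbb{Z}/2\mathbb{Z})$. Combined with the isomorphism above, this identifies $K$ with $M^0_n(\mathbb{Z}/2\mathbb{Z})$, and the short exact sequence follows.

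The only step requiring genuine care is the determinant computation $\det(I+2B) \equiv 1 + 2\,\mathrm{tr}(B) \pmod 4$; this is where one must be precise about working over $\mathbb{Z}/4\mathbb{Z}$ rather than a field, in particular that the cross terms vanish because they are divisible by $4$, and that $B \bmod 2$ is the right parameter (so that $\mathrm{tr}(B)$ is well-defined mod $2$). I expect the surjectivity of $\pi$ to be the only other place needing a sentence of justification; lifting the elementary transvections $I + E_{ij}$ (for $i \neq j$) and using that they generate $\mathrm{SL}_n$ over any commutative ring is the cleanest route, but a citation to the standard fact that $\mathrm{SL}_n(\mathbb{Z}) \twoheadrightarrow \mathrm{SL}_n(\mathbb{Z}/m\mathbb{Z})$ for all $m$ would also suffice. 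Everything else is formal.
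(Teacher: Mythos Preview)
Your proof is correct and the core identification is the same as the paper's: the kernel of reduction mod $2$ consists of matrices $I+2B$, and membership in $\mathrm{SL}_n$ forces $\mathrm{tr}(B)\equiv 0\pmod 2$. The difference is in packaging. The paper works upstairs in $\mathrm{SL}_n(\mathbb{Z})$, starts from the defining sequence $1\to\Gamma_n(2)\to\mathrm{SL}_n(\mathbb{Z})\to\mathrm{SL}_n(\mathbb{Z}/2\mathbb{Z})\to 1$, cites Lee--Szczarba for the fact that $\Psi(I+2A)=A\bmod 2$ gives a surjection $\Gamma_n(2)\twoheadrightarrow M^0_n(\mathbb{Z}/2\mathbb{Z})$ with kernel $\Gamma_n(4)$, and then quotients by $\Gamma_n(4)$ to obtain the desired sequence. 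You instead work directly over $\mathbb{Z}/4\mathbb{Z}$ and prove the trace-zero condition by the explicit Leibniz expansion $\det(I+2B)\equiv 1+2\,\mathrm{tr}(B)\pmod 4$. Your route is more self-contained (no external citation needed for the key computation), while the paper's route makes the connection to congruence subgroups $\Gamma_n(2)/\Gamma_n(4)\cong M^0_n(\mathbb{Z}/2\mathbb{Z})$ explicit, which is convenient since those subgroups reappear later in the argument.
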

\begin{proof}
By the definition of the congruence subgroup $\Gamma_n(2)$ of $\mathrm{SL}_n(\mathbb{Z})$, we have the short exact sequence
\[
1 \rightarrow\Gamma_n(2) \hookrightarrow \mathrm{SL}_n(\mathbb{Z}) \twoheadrightarrow \mathrm{SL}_n(\mathbb{Z}/2\mathbb{Z}) \rightarrow 1.
\]
Elements $M\in \Gamma_n(2)$ can be written as $M=I+2A$ where $A$ is an $n$ by $n$ matrix. Lee--Szczarba \cite{LeeS} proved that for $M=I+2A\in \Gamma_n(2)$, the trace of $A$ is $0$ mod $2$, and the map $\Psi: \Gamma_n(2)\rightarrow M^0_n(\mathbb{Z}/2\mathbb{Z})$ defined by $\Psi(I+2A)=A \mod 2$ is a surjective homomorphism. Since $\mathrm{Ker}(\Psi)=\Gamma_n(4)$, we thus have $\Gamma_n(2)/\Gamma_n(4)\cong M^0_n(\mathbb{Z}/2\mathbb{Z})$ and a short exact sequence
\begin{center}
\begin{tikzcd}
1 \arrow{r} & \Gamma_n(2)/\Gamma_n(4) \arrow{r} \arrow[d, phantom, sloped, "\cong"] & \mathrm{SL}_n(\mathbb{Z})/\Gamma_n(4) \arrow{r} \arrow[d, phantom, sloped, "\cong"] & \mathrm{SL}_n(\mathbb{Z}/2\mathbb{Z}) \arrow{r} & 1\\
            & M^0_n(\mathbb{Z}/2\mathbb{Z}) & \mathrm{SL}_n(\mathbb{Z}/4\mathbb{Z}) & &
\end{tikzcd}
\end{center}
which proves the proposition.
\end{proof}

\begin{lemma}
\label{level 4}
Let $f\in\mathrm{Aut}(F_n)$. Assume the following three conditions are satisfied:
\begin{enumerate}
\item the Torelli subgroup $IA_n\subset N(f)$,
\item the level $4$ subgroup $\Gamma_n(4)\subset\overline{N(f)}$,  and
\item the group $\mathrm{Im}(\overline{N(f)}\rightarrow\mathrm{SL}_n(\mathbb{Z}/2\mathbb{Z}))$ is nontrivial.
\end{enumerate}
Then, when $n\neq 4$, we have
\[
    N(f) = 
    \begin{cases}
        \mathrm{SAut}(F_n) & \text{if det}(\bar{f}) = 1, \\
        \mathrm{Aut}(F_n) & \text{if det}(\bar{f}) = -1.
    \end{cases}
\]
\end{lemma}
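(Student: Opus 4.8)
The plan is to mimic the proof of Lemma~\ref{level 3}, with the small center $\{\pm I\}\subset\mathrm{SL}_n(\mathbb{Z}/3\mathbb{Z})$ replaced by the larger kernel $M^0_n(\mathbb{Z}/2\mathbb{Z})$ of Proposition~\ref{ses}. As in Lemma~\ref{left multi}, it suffices to prove $\overline{N(f)}\supseteq\mathrm{SL}_n(\mathbb{Z})$: the stated conclusion then follows exactly as there, using hypothesis (1) (when $\det(\bar f)=1$ one also observes $f\in\mathrm{SAut}(F_n)$, so $N(f)\subseteq\mathrm{SAut}(F_n)$ and the inclusion becomes an equality). Since $\overline{N(f)}$ is normal in $\mathrm{GL}_n(\mathbb{Z})$, so is $\overline{N(f)}\cap\mathrm{SL}_n(\mathbb{Z})$, and by hypothesis (2) it contains $\Gamma_n(4)$; let $G$ be its image in $\mathrm{SL}_n(\mathbb{Z}/4\mathbb{Z})=\mathrm{SL}_n(\mathbb{Z})/\Gamma_n(4)$, a normal subgroup fitting into a short exact sequence $1\to\Gamma_n(4)\to\overline{N(f)}\cap\mathrm{SL}_n(\mathbb{Z})\to G\to 1$. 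It is enough to show $G=\mathrm{SL}_n(\mathbb{Z}/4\mathbb{Z})$, since that forces $\overline{N(f)}\cap\mathrm{SL}_n(\mathbb{Z})=\mathrm{SL}_n(\mathbb{Z})$.

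First I would reduce modulo $2$. The image of $\overline{N(f)}$ in $\mathrm{SL}_n(\mathbb{Z}/2\mathbb{Z})$ is a nontrivial normal subgroup by hypothesis (3), hence all of $\mathrm{SL}_n(\mathbb{Z}/2\mathbb{Z})$ since that group is simple for $n\geq 3$ (cf.\ \cite{CJ}). As $\overline{N(f)}\cap\mathrm{SL}_n(\mathbb{Z})$ has index at most $2$ in $\overline{N(f)}$ and $\mathrm{SL}_n(\mathbb{Z}/2\mathbb{Z})$ has order greater than $2$, the image of $\overline{N(f)}\cap\mathrm{SL}_n(\mathbb{Z})$ modulo $2$ is again nontrivial, hence all of $\mathrm{SL}_n(\mathbb{Z}/2\mathbb{Z})$. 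Equivalently, inside the extension of Proposition~\ref{ses} we have $G\cdot M^0_n(\mathbb{Z}/2\mathbb{Z})=\mathrm{SL}_n(\mathbb{Z}/4\mathbb{Z})$.

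Now set $V=G\cap M^0_n(\mathbb{Z}/2\mathbb{Z})$. This is a normal subgroup of $\mathrm{SL}_n(\mathbb{Z}/4\mathbb{Z})$ lying in the abelian group $M^0_n(\mathbb{Z}/2\mathbb{Z})$, hence a submodule for the conjugation action of $\mathrm{SL}_n(\mathbb{Z}/2\mathbb{Z})$, and we want $V=M^0_n(\mathbb{Z}/2\mathbb{Z})$. If not, then $G/V$ is a complement to $M^0_n(\mathbb{Z}/2\mathbb{Z})/V$ in $\mathrm{SL}_n(\mathbb{Z}/4\mathbb{Z})/V$, so the extension $1\to M^0_n(\mathbb{Z}/2\mathbb{Z})/V\to\mathrm{SL}_n(\mathbb{Z}/4\mathbb{Z})/V\to\mathrm{SL}_n(\mathbb{Z}/2\mathbb{Z})\to 1$ splits. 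To derive a contradiction I would first pin down the candidates for $V$ by determining the $\mathrm{SL}_n(\mathbb{Z}/2\mathbb{Z})$-submodule lattice of $M^0_n(\mathbb{Z}/2\mathbb{Z})$: it is $\{0,\ M^0_n(\mathbb{Z}/2\mathbb{Z})\}$ when $n$ is odd (the trace-zero module being irreducible), and $\{0,\ (\mathbb{Z}/2\mathbb{Z})I,\ M^0_n(\mathbb{Z}/2\mathbb{Z})\}$ when $n$ is even, the middle term corresponding to $\{\pm I\}$; here one uses that $\mathrm{SL}_n(\mathbb{Z}/2\mathbb{Z})$ is perfect for $n\geq 3$, which rules out any further submodule (in particular any of codimension one) and shows $M^0_n(\mathbb{Z}/2\mathbb{Z})/(\mathbb{Z}/2\mathbb{Z})I$ is simple. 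Thus $V$ is $0$, or is $(\mathbb{Z}/2\mathbb{Z})I$ with $n$ even, and it remains to show that the congruence extension $\mathrm{SL}_n(\mathbb{Z}/4\mathbb{Z})\to\mathrm{SL}_n(\mathbb{Z}/2\mathbb{Z})$ (kernel $M^0_n(\mathbb{Z}/2\mathbb{Z})$) and its quotient $\mathrm{PSL}_n(\mathbb{Z}/4\mathbb{Z})\to\mathrm{SL}_n(\mathbb{Z}/2\mathbb{Z})$ (kernel $M^0_n(\mathbb{Z}/2\mathbb{Z})/\{\pm I\}$) are both non-split. This is precisely where $n\neq 4$ enters: just as \cite{KC} supplies the non-splitting used in Lemma~\ref{level 3}, the analogous level-$4$ statements hold for all $n\geq 3$ except $n=4$, where $\mathrm{SL}_4(\mathbb{Z}/2\mathbb{Z})\cong A_8$ behaves exceptionally.

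I expect the non-splitting step to be the main obstacle, since everything preceding it is a routine transcription of the scheme of Lemma~\ref{level 3}. A convenient alternative that avoids cohomology is to establish directly that $[\mathrm{SL}_n(\mathbb{Z}/4\mathbb{Z}),M^0_n(\mathbb{Z}/2\mathbb{Z})]=M^0_n(\mathbb{Z}/2\mathbb{Z})$, i.e.\ that the trace-zero module has no nonzero trivial $\mathrm{SL}_n(\mathbb{Z}/2\mathbb{Z})$-quotient (again via perfectness of $\mathrm{SL}_n(\mathbb{Z}/2\mathbb{Z})$); granting this, the relation $G\cdot M^0_n(\mathbb{Z}/2\mathbb{Z})=\mathrm{SL}_n(\mathbb{Z}/4\mathbb{Z})$ together with the abelianness of $M^0_n(\mathbb{Z}/2\mathbb{Z})$ and the commutator identity $[m,gm']=[m,g]\,g[m,m']g^{-1}$ gives $M^0_n(\mathbb{Z}/2\mathbb{Z})=[\mathrm{SL}_n(\mathbb{Z}/4\mathbb{Z}),M^0_n(\mathbb{Z}/2\mathbb{Z})]=[G,M^0_n(\mathbb{Z}/2\mathbb{Z})]\subseteq G$, hence $G=\mathrm{SL}_n(\mathbb{Z}/4\mathbb{Z})$; verifying this commutator identity, and isolating the genuine role of the exclusion $n=4$, is the remaining delicate point.
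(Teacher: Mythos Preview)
Your main line is exactly the paper's proof: set $G=\mathrm{Im}\bigl(\overline{N(f)}\cap\mathrm{SL}_n(\mathbb{Z})\to\mathrm{SL}_n(\mathbb{Z}/4\mathbb{Z})\bigr)$, use simplicity of $\mathrm{SL}_n(\mathbb{Z}/2\mathbb{Z})$ to get surjectivity mod~$2$, identify $V=G\cap M^0_n(\mathbb{Z}/2\mathbb{Z})$ as an $\mathrm{SL}_n(\mathbb{Z}/2\mathbb{Z})$-submodule, and rule out the proper possibilities via non-splitting. Two remarks on the details. First, your appeal to ``perfectness'' does not by itself determine the submodule lattice of $M^0_n(\mathbb{Z}/2\mathbb{Z})$; the paper cites \cite{LS} for this. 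Second, the non-splitting you flag as the main obstacle is exactly what the paper supplies: the case $V=0$ is handled by citing Sah \cite{SH} for the non-splitting of $\mathrm{SL}_n(\mathbb{Z}/4\mathbb{Z})\to\mathrm{SL}_n(\mathbb{Z}/2\mathbb{Z})$, and the case $V=\{\pm I\}$ (for $n$ even) is handled by Proposition~\ref{split}, a direct elementary-matrix computation showing $\mathrm{PSL}_n(\mathbb{Z}/4\mathbb{Z})\to\mathrm{PSL}_n(\mathbb{Z}/2\mathbb{Z})$ does not split when $n\geq 6$. The exclusion $n\neq 4$ is used only here, since that map \emph{does} split for $n=4$.

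Your commutator alternative is a genuinely different route and worth pursuing. Granting the submodule lattice $\{0,\mathbb{F}_2 I,M^0_n\}$, every proper quotient of $M^0_n(\mathbb{Z}/2\mathbb{Z})$ is either $0$ or the simple module $M^0_n/\mathbb{F}_2 I$ of dimension $n^2-2>1$, so the $\mathrm{SL}_n(\mathbb{Z}/2\mathbb{Z})$-coinvariants vanish and your chain $M^0_n=[\mathrm{SL}_n(\mathbb{Z}/4\mathbb{Z}),M^0_n]=[G,M^0_n]\subseteq G$ (the last inclusion by normality of $G$) goes through with no further input. This bypasses the non-splitting entirely and in fact works for $n=4$ as well; stop looking for a ``genuine role'' of the exclusion $n\neq 4$ along this route, because there is none---the restriction in the lemma reflects the paper's method rather than the truth of the statement.
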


\begin{proof}
Let $G = \text{Im}(\overline{N(f)} \cap \mathrm{SL}_n(\mathbb{Z}) \rightarrow \mathrm{SL}_n(\mathbb{Z}/4\mathbb{Z}))$ and $\hat{G} = \text{Im}(G \rightarrow \mathrm{SL}_n(\mathbb{Z}/2\mathbb{Z}))$ where the maps are the natural projection maps.  Then we have the short exact sequence:
\[  1 \rightarrow \Gamma_n(4) \rightarrow \overline{N(f)} \cap \mathrm{SL}_n(\mathbb{Z}) \rightarrow G \rightarrow 1\]
Our goal is to prove $G = \mathrm{SL}_n(\mathbb{Z}/4\mathbb{Z})$. This will imply that $\overline{N(f)}$ contains $\mathrm{SL}_n(\mathbb{Z})$ and the lemma will follow as in the proof of Lemma \ref{left multi}. 

Let $G'=\mathrm{Ker}(G\rightarrow\hat{G})$. The short exact sequence in Proposition \ref{ses} induces the short exact sequence:
\begin{align}\label{ses_G}
    1 \rightarrow G' \rightarrow G \rightarrow \hat{G} \rightarrow 1
\end{align}
By hypothesis $3$, we have that $\hat{G}$ is a nontrivial normal subgroup in $\mathrm{SL}_n(\mathbb{Z}/2\mathbb{Z})$. Since $\mathrm{SL}_n(\mathbb{Z}/2\mathbb{Z})$ is simple for all $n \geq 3$, we have $\hat{G} = \mathrm{SL}_n(\mathbb{Z}/2\mathbb{Z})$.

We have $G'<M^0_n(\mathbb{Z}/2\mathbb{Z})$. It is closed under conjugation by $G$. Since $M^0_n(\mathbb{Z}/2\mathbb{Z})$ is abelian, this conjugation action factors through $\hat{G} = \mathrm{SL}_n(\mathbb{Z}/2\mathbb{Z})$. The action of $\mathrm{SL}_n(\mathbb{Z}/2\mathbb{Z})$ on $M^0_n(\mathbb{Z}/2\mathbb{Z})$ is the usual adjoint representation, so $G'$ is a subrepresentation of $M^0_n(\mathbb{Z}/2\mathbb{Z})$. By \cite{LS},  the group $G'$ can only be either $\{I\}$ or $M^0_n(\mathbb{Z}/2\mathbb{Z})$ when $n$ is odd, and group $G'$ can be the group of  scalar matrices as well when $n$ is even. If $G' = \{I\}$, then $G\cong \hat{G}$ where $\hat{G} = \mathrm{SL}_n(\mathbb{Z}/2\mathbb{Z})$. By \cite[Theorem $7$ in \S $2$]{SH}, when $n\geq 3$, the map $\mathrm{SL}_n(\mathbb{Z}/4\mathbb{Z}) \rightarrow \mathrm{SL}_n(\mathbb{Z}/2\mathbb{Z})$ does not split which means we cannot find a subgroup in $\mathrm{SL}_n(\mathbb{Z}/4\mathbb{Z})$ that is isomorphic to $\mathrm{SL}_n(\mathbb{Z}/2\mathbb{Z})$. Thus, $G'\neq\{I\}$. 

When $n$ is even, if $G'$ is the group of scalar matrices, that is to say, $G' = \{\pm I\}$, then we consider the short exact sequence in Proposition \ref{ses} quotiented by the center subgroup: 
\[
1 \rightarrow M^0_n(\mathbb{Z}/2\mathbb{Z})/ \{\mathbf{0}, I\} \rightarrow \mathrm{PSL}_n(\mathbb{Z}/4\mathbb{Z}) \rightarrow \mathrm{PSL}_n(\mathbb{Z}/2\mathbb{Z}) \rightarrow 1.
\]
The short exact sequence (\ref{ses_G}) gives us 
\[G/G'\cong \hat{G}/G',
\]
where $\hat{G}/G'=\mathrm{PSL}_n(\mathbb{Z}/2\mathbb{Z})$. By Proposition \ref{split}, which we will prove below, the map $\mathrm{PSL}_n(\mathbb{Z}/4\mathbb{Z}) \rightarrow \mathrm{PSL}_n(\mathbb{Z}/2\mathbb{Z})$ does not split when $n \geq 6$. This implies that there is no subgroup in $\mathrm{PSL}_n(\mathbb{Z}/4\mathbb{Z})$ that is isomorphic to $\mathrm{PSL}_n(\mathbb{Z}/2\mathbb{Z})$. Thus, $G/G'\not\cong\mathrm{PSL}_n(\mathbb{Z}/2\mathbb{Z})$ and then $G'\neq\{\pm I\}$. We conclude that $G'$ is the whole group $M^0_n(\mathbb{Z}/2\mathbb{Z})$ so that $G = \mathrm{SL}_n(\mathbb{Z}/4\mathbb{Z})$.
\end{proof}

Before we proceed with the proof of Proposition \ref{split} mentioned in Lemma \ref{level 4}, there are two powerful propositions derived from Lemma \ref{level 3} and Lemma \ref{level 4}. Let $e_{k,r}$  denote the elementary matrix with 1’s along the diagonal and at position $ (k, r)$, and $0$’s elsewhere. 

\begin{prop}
\label{level 3 prop}
Let $f\in\mathrm{Aut}(F_n)$. Assume the following three conditions are satisfied:
\begin{enumerate}
\item there exist distinct $1\leq i,j\leq n$ such that the conjugation map $C_{x_ix_j}\in N(f)$,
\item there exist distinct $1\leq k,r\leq n$ such that the matrix $e_{k,r}^3\in\overline{N(f)}$,  and
\item there exists a matrix $\Phi\in \overline{N(f)}$ such that $\Phi\not\equiv I$ mod $ 3$. 
\end{enumerate}
Then, we have
\[
    N(f) = 
    \begin{cases}
        \mathrm{SAut}(F_n) & \text{if det}(\bar{f}) = 1, \\
        \mathrm{Aut}(F_n) & \text{if det}(\bar{f}) = -1.
    \end{cases}
\]
\end{prop}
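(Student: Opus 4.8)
The plan is to deduce this from Lemma \ref{level 3} by verifying its three hypotheses. Observe first that $N(f)$ is normal in $\mathrm{Aut}(F_n)$, and since the natural map $\mathrm{Aut}(F_n)\to\mathrm{GL}_n(\mathbb{Z})$ is surjective, $\overline{N(f)}$ is normal in $\mathrm{GL}_n(\mathbb{Z})$; this normality is used throughout.

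Hypothesis (1) of Lemma \ref{level 3}, that $IA_n\subset N(f)$, comes from assumption (1): any two conjugation maps $C_{x_ax_b}$ and $C_{x_cx_d}$ (with $a\neq b$, $c\neq d$) are conjugate in $\mathrm{Aut}(F_n)$ by a permutation automorphism, so $C_{x_ix_j}\in N(f)$ together with normality of $N(f)$ gives $C_{x_1x_2}\in N(f)$, and then Magnus's theorem that $IA_n$ is the $\mathrm{Aut}(F_n)$-normal closure of $C_{x_1x_2}$ (used already in the proof of Lemma \ref{left multi}) yields $IA_n\subset N(f)$. Hypothesis (2), that $\Gamma_n(3)\subset\overline{N(f)}$, comes from assumption (2): for $n\geq 3$ all elementary matrices $e_{a,b}$ with $a\neq b$ are conjugate in $\mathrm{SL}_n(\mathbb{Z})$, hence so are their cubes, and normality of $\overline{N(f)}$ in $\mathrm{GL}_n(\mathbb{Z})$ turns $e_{k,r}^3\in\overline{N(f)}$ into $e_{1,2}^3\in\overline{N(f)}$; the classical description of congruence subgroups (Bass--Milnor--Serre, Mennicke), namely that for $n\geq 3$ the group $\Gamma_n(\ell)$ is the $\mathrm{SL}_n(\mathbb{Z})$-normal closure of $e_{1,2}^\ell$, applied with $\ell=3$, then gives $\Gamma_n(3)\subset\overline{N(f)}$.

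Hypothesis (3), nontriviality of the image of $\overline{N(f)}$ in $\mathrm{PSL}_n(\mathbb{Z}/3\mathbb{Z})$, is the crux. Reducing modulo $3$, the image $\bar N$ of $\overline{N(f)}$ in $\mathrm{GL}_n(\mathbb{Z}/3\mathbb{Z})$ is a normal subgroup, and it is nontrivial because the reduction of the matrix $\Phi$ from assumption (3) is $\neq I$. Since $\mathrm{PSL}_n(\mathbb{Z}/3\mathbb{Z})$ is simple for $n\geq 3$, the nontrivial normal subgroups of $\mathrm{GL}_n(\mathbb{Z}/3\mathbb{Z})$ are exactly $\{\pm I\}$, $\mathrm{SL}_n(\mathbb{Z}/3\mathbb{Z})$, and $\mathrm{GL}_n(\mathbb{Z}/3\mathbb{Z})$; in the latter two cases $\bar N\supseteq\mathrm{SL}_n(\mathbb{Z}/3\mathbb{Z})$ surjects onto $\mathrm{PSL}_n(\mathbb{Z}/3\mathbb{Z})$, so hypothesis (3) holds (and, combined with $\Gamma_n(3)\subset\overline{N(f)}$, this already forces $\mathrm{SL}_n(\mathbb{Z})\subset\overline{N(f)}$, after which the Proposition follows as in Lemma \ref{left multi}). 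The leftover possibility $\bar N=\{\pm I\}$ forces $\Phi\equiv -I\pmod 3$; this is excluded in the settings where the Proposition is applied --- for instance, when $f$ has odd prime order $p$, every element of $\overline{N(f)}$ reduces mod $3$ to a product of conjugates of the reduction of $\bar f$, an element of order dividing the odd number $p$, and so cannot equal the order-$2$ matrix $-I$ unless it equals $I$. With all three hypotheses verified, Lemma \ref{level 3} finishes the proof.

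The step I expect to be the main obstacle is hypothesis (3): converting the mild input ``$\Phi\not\equiv I\pmod 3$'' into nontriviality of the $\mathrm{PSL}_n(\mathbb{Z}/3\mathbb{Z})$-image requires the classification of normal subgroups of $\mathrm{GL}_n(\mathbb{Z}/3\mathbb{Z})$ and attention to the borderline central case $\bar\Phi=-I$; by contrast, hypotheses (1) and (2) are immediate consequences of, respectively, Magnus's normal-generation theorem and the Bass--Milnor--Serre description of congruence subgroups.
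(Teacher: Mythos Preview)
Your approach is the same as the paper's: verify the three hypotheses of Lemma~\ref{level 3} using Magnus's theorem for (1) and Bass--Milnor--Serre for (2), then invoke the lemma. The paper's treatment of hypothesis (3) is a single sentence: ``Assumption $3$ directly shows the group $\mathrm{Im}(\overline{N(f)}\rightarrow\mathrm{PSL}_n(\mathbb{Z}/3\mathbb{Z}))$ is nontrivial.'' You go further and correctly flag the possibility $\Phi\equiv -I\pmod 3$ as an obstruction the paper does not address.

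Your handling of that case, however, is not a proof of the proposition as stated: you retreat to the extra hypothesis that $f$ has odd prime order, and even under that hypothesis your argument is garbled. The sentence ``every element of $\overline{N(f)}$ reduces mod $3$ to a product of conjugates of the reduction of $\bar f$, an element of order dividing the odd number $p$, and so cannot equal the order-$2$ matrix $-I$'' reads as the false claim that a product of conjugates of an odd-order element cannot equal $-I$. The correct argument is simpler and concerns $\bar f$ alone, not arbitrary products: $\bar f$ itself lies in $\bar N$, so if $\bar N\subseteq\{\pm I\}$ then $\bar f\equiv\pm I\pmod 3$; odd order of $\bar f$ forces $\bar f\equiv I$, whence every conjugate of $\bar f$ reduces to $I$ and therefore $\bar N=\{I\}$, contradicting assumption (3). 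This suffices for all the paper's applications (where $f$ has odd prime order and in fact $\Phi=\bar f$, so $\Phi\not\equiv -I$ is immediate), but neither you nor the paper proves the proposition as literally stated, with no order hypothesis on $f$.
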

\begin{proof}
Magnus \cite{WM} proved that the Torelli subgroup $IA_n$ of  $\mathrm{Aut}(F_n)$ is $\mathrm{Aut}(F_n)$-normally generated by $C_{x_ix_j}$ for distinct $1\leq i,j\leq n$. Thus, assumption $1$ tells us $IA_n\subset N(f)$. According to \cite{BMS}, the group $\Gamma_n(\ell)$ is $\mathrm{SL}_n(\mathbb{Z})$-normally generated by any single $e_{k,r}^\ell$ with distinct $1\leq k,r\leq n$. Therefore, $e_{k,r}^3\in\overline{N(f)}$ implies that $\Gamma_n(3)\in\overline{N(f)}$. Assumption $3$ directly shows the group $\mathrm{Im}(\overline{N(f)}\rightarrow\mathrm{PSL}_n(\mathbb{Z}/3\mathbb{Z}))$ is nontrivial. Applying Lemma \ref{level 3}, we derive the stated result.
\end{proof}

Similarly, we have the following proposition.
\begin{prop}
\label{level 4 prop}
For $n\neq 4$, consider $f\in\mathrm{Aut}(F_n)$. Assume the following three conditions are satisfied:
\begin{enumerate}
\item there exist distinct $1\leq i,j\leq n$ such that the conjugation map $C_{x_ix_j}\in N(f)$,
\item there exist distinct $1\leq k,r\leq n$ such that the matrix $e_{k,r}^4\in\overline{N(f)}$,  and
\item there exists a matrix $\Phi\in \overline{N(f)}$ such that $\Phi\not\equiv I$ mod $ 2$. 
\end{enumerate}
Then, we have
\[
    N(f) = 
    \begin{cases}
        \mathrm{SAut}(F_n) & \text{if det}(\bar{f}) = 1, \\
        \mathrm{Aut}(F_n) & \text{if det}(\bar{f}) = -1.
    \end{cases}
\]
\end{prop}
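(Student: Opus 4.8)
The plan is to deduce this from Lemma \ref{level 4} in exactly the way Proposition \ref{level 3 prop} was deduced from Lemma \ref{level 3}; the only change is replacing the level $3$ input by level $4$ input, which is what forces the extra hypothesis $n\neq 4$ inherited from Lemma \ref{level 4}. First I would translate hypothesis (1) into condition (1) of Lemma \ref{level 4}: by the theorem of Magnus \cite{WM} (see also Day--Putman \cite{DP}), the Torelli subgroup $IA_n$ is normally generated in $\mathrm{Aut}(F_n)$ by a single conjugation $C_{x_ix_j}$ with $i\neq j$, and any two such conjugations are conjugate by a permutation automorphism; hence $C_{x_ix_j}\in N(f)$ gives $IA_n\subseteq N(f)$.

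Next I would obtain condition (2). Since $N(f)$ is normal in $\mathrm{Aut}(F_n)$ and the map $\mathrm{Aut}(F_n)\to\mathrm{GL}_n(\mathbb{Z})$ is surjective, the image $\overline{N(f)}$ is normal in $\mathrm{GL}_n(\mathbb{Z})$, in particular in $\mathrm{SL}_n(\mathbb{Z})$. By Bass--Milnor--Serre \cite{BMS}, the congruence subgroup $\Gamma_n(\ell)$ is normally generated in $\mathrm{SL}_n(\mathbb{Z})$ by any single $e_{k,r}^{\ell}$ with $k\neq r$; applying this with $\ell=4$ and hypothesis (2) yields $\Gamma_n(4)\subseteq\overline{N(f)}$. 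Finally, hypothesis (3) provides a matrix $\Phi\in\overline{N(f)}$ with $\Phi\not\equiv I\bmod 2$; since $\mathrm{GL}_n(\mathbb{Z}/2\mathbb{Z})=\mathrm{SL}_n(\mathbb{Z}/2\mathbb{Z})$, its reduction is a nontrivial element of $\mathrm{SL}_n(\mathbb{Z}/2\mathbb{Z})$, so $\mathrm{Im}(\overline{N(f)}\to\mathrm{SL}_n(\mathbb{Z}/2\mathbb{Z}))$ is nontrivial, which is condition (3). With $n\neq 4$ in force, Lemma \ref{level 4} applies and gives the claimed value of $N(f)$.

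I do not expect a genuine obstacle here: the statement is essentially a bookkeeping corollary, and all of the real content has already been absorbed into Lemma \ref{level 4} (and, through it, into Proposition \ref{split}, whose proof is still pending). The one point deserving a moment's care is the passage from ``$e_{k,r}^4\in\overline{N(f)}$'' to ``$\Gamma_n(4)\subseteq\overline{N(f)}$'', which relies on the normality of $\overline{N(f)}$ in $\mathrm{SL}_n(\mathbb{Z})$ noted above together with the Bass--Milnor--Serre normal-generation statement; and the hypothesis $n\neq 4$ cannot be dropped precisely because it is required by Lemma \ref{level 4}.
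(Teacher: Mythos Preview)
Your proposal is correct and matches the paper's own proof essentially line for line: the paper simply says the argument is the same as for Proposition \ref{level 3 prop}, i.e., use Magnus to get $IA_n\subset N(f)$ from (1), Bass--Milnor--Serre to get $\Gamma_n(4)\subset\overline{N(f)}$ from (2), read off the nontrivial image in $\mathrm{SL}_n(\mathbb{Z}/2\mathbb{Z})$ from (3), and apply Lemma \ref{level 4}. Your added remarks about the normality of $\overline{N(f)}$ and the equality $\mathrm{GL}_n(\mathbb{Z}/2\mathbb{Z})=\mathrm{SL}_n(\mathbb{Z}/2\mathbb{Z})$ are exactly the small justifications the paper leaves implicit.
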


\begin{proof}
The proof follows a similar argument to that of Proposition \ref{level 3 prop}.
\end{proof}

Now, let us prove the Proposition \ref{split} mentioned in Lemma \ref{level 4}. Given the short exact sequence \[
1 \rightarrow M^0_n(\mathbb{Z}/2\mathbb{Z}) \rightarrow \mathrm{SL}_n(\mathbb{Z}/4\mathbb{Z}) \rightarrow \mathrm{SL}_n(\mathbb{Z}/2\mathbb{Z}) \rightarrow 1,
\]
taking the quotient by the center of each matrix group, we have the following short exact sequence: \[
1 \rightarrow M^0_n(\mathbb{Z}/2\mathbb{Z})/\{\mathbf{0},I\} \rightarrow \mathrm{PSL}_n(\mathbb{Z}/4\mathbb{Z}) \rightarrow \mathrm{PSL}_n(\mathbb{Z}/2\mathbb{Z}) \rightarrow 1.
\]
We would like to prove the following property.
\begin{prop}\label{split}
For an even integer $n\geq 6$, the map $\mathrm{PSL}_n(\mathbb{Z}/4\mathbb{Z}) \rightarrow \mathrm{PSL}_n(\mathbb{Z}/2\mathbb{Z})$ does not split.
\end{prop}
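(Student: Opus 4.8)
The plan is to reduce the non-splitting of $\mathrm{PSL}_n(\mathbb{Z}/4\mathbb{Z})\to\mathrm{PSL}_n(\mathbb{Z}/2\mathbb{Z})$ to the non-splitting of $\mathrm{SL}_n(\mathbb{Z}/4\mathbb{Z})\to\mathrm{SL}_n(\mathbb{Z}/2\mathbb{Z})$ (already invoked in the proof of Lemma \ref{level 4}) together with the vanishing of the Schur multiplier of $\mathrm{PSL}_n(\mathbb{F}_2)$ for $n\geq 5$. First some bookkeeping. Since $n$ is even, $-I$ has determinant $1$ and $-I\not\equiv I\pmod 4$, so $\{\pm I\}\cong\mathbb{Z}/2\mathbb{Z}$ is the (order $2$) center of $\mathrm{SL}_n(\mathbb{Z}/4\mathbb{Z})$ and $\mathrm{PSL}_n(\mathbb{Z}/4\mathbb{Z})=\mathrm{SL}_n(\mathbb{Z}/4\mathbb{Z})/\{\pm I\}$; on the other hand $-I=I$ over $\mathbb{F}_2$, so $\mathrm{PSL}_n(\mathbb{Z}/2\mathbb{Z})=\mathrm{SL}_n(\mathbb{Z}/2\mathbb{Z})$. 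Write $\pi\colon\mathrm{SL}_n(\mathbb{Z}/4\mathbb{Z})\to\mathrm{PSL}_n(\mathbb{Z}/4\mathbb{Z})$ for the quotient and $\rho\colon\mathrm{SL}_n(\mathbb{Z}/4\mathbb{Z})\to\mathrm{SL}_n(\mathbb{Z}/2\mathbb{Z})$ for reduction mod $2$; since $\rho$ kills $\{\pm I\}$ it factors through $\pi$ as the map $q$ of the proposition.

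Suppose for contradiction that $q$ splits, with a homomorphic section whose image is a subgroup $H\cong\mathrm{SL}_n(\mathbb{Z}/2\mathbb{Z})$ of $\mathrm{PSL}_n(\mathbb{Z}/4\mathbb{Z})$. Set $E=\pi^{-1}(H)\leq\mathrm{SL}_n(\mathbb{Z}/4\mathbb{Z})$. Then $\{\pm I\}\leq E$ is central, $\pi$ restricts to a surjection $E\to H$ with kernel $\{\pm I\}$, and since $q|_H$ is an isomorphism, $\rho|_E\colon E\to\mathrm{SL}_n(\mathbb{Z}/2\mathbb{Z})$ is surjective with kernel exactly $\{\pm I\}$. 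Thus
\[
1\longrightarrow\{\pm I\}\longrightarrow E\xrightarrow{\ \rho|_E\ }\mathrm{SL}_n(\mathbb{Z}/2\mathbb{Z})\longrightarrow 1
\]
is a central extension realized inside $\mathrm{SL}_n(\mathbb{Z}/4\mathbb{Z})$ with $\rho|_E$ the restriction of the reduction map. Now run a dichotomy on $[E,E]$. For $n\geq 3$ the group $\mathrm{SL}_n(\mathbb{Z}/2\mathbb{Z})$ is simple nonabelian, hence perfect, so $\rho([E,E])=\mathrm{SL}_n(\mathbb{Z}/2\mathbb{Z})$ and $[E,E]\cdot\{\pm I\}=E$. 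If $-I\notin[E,E]$, then $\rho$ restricts to an isomorphism $[E,E]\xrightarrow{\sim}\mathrm{SL}_n(\mathbb{Z}/2\mathbb{Z})$, whose inverse is a homomorphic section of $\rho\colon\mathrm{SL}_n(\mathbb{Z}/4\mathbb{Z})\to\mathrm{SL}_n(\mathbb{Z}/2\mathbb{Z})$ --- contradicting the non-splitting of that map for $n\geq 3$ (cited via \cite{SH} in the proof of Lemma \ref{level 4}). If $-I\in[E,E]$, then $\{\pm I\}\leq[E,E]$, which together with $[E,E]\cdot\{\pm I\}=E$ forces $E=[E,E]$, i.e.\ $E$ is perfect.

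It remains to exclude the possibility that $E$ is perfect, and this is where the hypothesis $n\geq 6$ enters. If $Q$ is a perfect group with trivial Schur multiplier $H_2(Q;\mathbb{Z})$, then $Q$ admits no perfect central extension other than $Q$ itself (the universal central extension of $Q$ has kernel $H_2(Q;\mathbb{Z})=0$). For $Q=\mathrm{SL}_n(\mathbb{Z}/2\mathbb{Z})=\mathrm{PSL}_n(\mathbb{F}_2)$ it is classical that $H_2(Q;\mathbb{Z})=0$ for all $n\geq 5$; the only exceptions among $n\geq 3$ are $n=3$ and $n=4$, where the multiplier is $\mathbb{Z}/2\mathbb{Z}$ --- exactly the exceptional value already barred in Lemma \ref{level 4}. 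Hence, for $n\geq 6$, a perfect central extension of $\mathrm{SL}_n(\mathbb{Z}/2\mathbb{Z})$ with nontrivial kernel $\{\pm I\}$ cannot exist, a contradiction. Therefore $q$ does not split.

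The substantive content, beyond the routine bookkeeping, is the input that $H_2(\mathrm{PSL}_n(\mathbb{F}_2);\mathbb{Z})=0$ for $n\geq 5$ --- I would cite a standard reference for the Schur multipliers of the groups $\mathrm{PSL}_n(\mathbb{F}_q)$ --- together with the classical description of perfect central extensions of a perfect group in terms of $H_2$. The point to get exactly right is the commutator dichotomy above: one must check that $\mathrm{SL}_n(\mathbb{Z}/2\mathbb{Z})$ is perfect for $n\geq 3$ (true since it is simple nonabelian) and that $[E,E]\cdot\{\pm I\}=E$, so that the two cases $-I\in[E,E]$ and $-I\notin[E,E]$ really are exhaustive and lead, respectively, to a perfect $E$ (killed by the Schur multiplier input) and to a section of the $\mathrm{SL}$-level reduction map (killed by \cite{SH}).
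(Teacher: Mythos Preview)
Your argument is correct and takes a genuinely different route from the paper's. The paper gives a direct, self-contained matrix computation: assuming a section $\sigma$, it writes $\sigma(e_{i,j})=[(I+2X_{i,j})E_{i,j}]$ with $X_{i,j}\in M^0_n(\mathbb{Z}/2\mathbb{Z})$, then imposes the relations $e_{i,j}^2=1$ and $[e_{i,j},e_{u,v}]=1$ (for $j\neq u$, $i\neq v$) to force $x^{i,j}_{i,i}+x^{i,j}_{j,j}\equiv 1$ and $x^{i,j}_{u,u}+x^{i,j}_{v,v}\equiv 0$ for $u,v\notin\{i,j\}$, whence $\mathrm{tr}(X_{i,j})\equiv 1\pmod 2$, contradicting $X_{i,j}\in M^0_n$. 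The hypothesis $n\geq 6$ appears concretely as the need for an index $s$ distinct from $i,j,u,v$ to rule out the $\pm$ ambiguity in the commutation relation.

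Your approach instead pulls the hypothetical section back to a central $\mathbb{Z}/2$-extension $E$ of $\mathrm{SL}_n(\mathbb{F}_2)$ inside $\mathrm{SL}_n(\mathbb{Z}/4\mathbb{Z})$ and runs a commutator dichotomy: either $[E,E]$ splits the $\mathrm{SL}$-level reduction (contradicting \cite{SH}, already used in Lemma~\ref{level 4}), or $E$ is perfect, which is impossible because $H_2(\mathrm{PSL}_n(\mathbb{F}_2);\mathbb{Z})=0$ for $n\geq 5$. This is cleaner and more conceptual, and it identifies the exceptional $n=3,4$ with the well-known exceptional Schur multipliers of $\mathrm{PSL}_3(2)$ and $\mathrm{PSL}_4(2)\cong A_8$. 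The cost is that you import a nontrivial external fact (the Schur multiplier table for $A_n(q)$, e.g.\ from Steinberg or the Atlas), whereas the paper's proof stays entirely elementary and could in principle be checked by hand. Both arguments make essential use of the $\mathrm{SL}$-level non-splitting from \cite{SH}; yours leverages it more directly, while the paper only uses it elsewhere in Lemma~\ref{level 4}.
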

\begin{proof}
First, we introduce some notation.
\begin{align*}
a_{i,j} &= n \times n  \text{ matrix over }\mathbb{Z}/2\mathbb{Z} \text{ with }(i,j) \text{ entry  }1  \text{ and other entries } 0;\\
e_{i,j} &= I + a_{i,j}, \text{ which is an elementary matrix over }\mathbb{Z}/2\mathbb{Z};\\
A_{i,j} &= n \times n  \text{ matrix over }\mathbb{Z}/4\mathbb{Z} \text{ with }(i,j) \text{ entry  }1  \text{ and other entries } 0;\\
E_{i,j} &= I + A_{i,j}, \text{ which is an elementary matrix over }\mathbb{Z}/4\mathbb{Z};\\
[g] &= \text{ the equivalence class of $g\in\mathrm{SL}_n(\mathbb{Z}/4\mathbb{Z})$ in the quotient group }\mathrm{PSL}_n(\mathbb{Z}/4\mathbb{Z}) .
\end{align*}
These elements map as follows: 
\begin{center}
\begin{tikzcd}[row sep=0, column sep=small]
0 \arrow[r] & M^0_n(\mathbb{Z}/2\mathbb{Z}) \arrow[r] & \mathrm{SL}_n(\mathbb{Z}/4\mathbb{Z}) \arrow[r] & \mathrm{SL}_n(\mathbb{Z}/2\mathbb{Z}) \arrow[r] & 1 \\
            & a_{ij} \arrow[r, mapsto]                & I + 2A_{ij} = E_{ij}^2                          &                                                 &   \\
            &                                         & E_{ij} \arrow[r, mapsto]                        & e_{ij}                                          &
\end{tikzcd}
\end{center}

Assume there exists a splitting $\sigma:\mathrm{PSL}_n(\mathbb{Z}/2\mathbb{Z}) \rightarrow \mathrm{PSL}_n(\mathbb{Z}/4\mathbb{Z})$. There then exists a matrix $X_{i,j}\in  M^0_n(\mathbb{Z}/4\mathbb{Z})$ with entries $0$ or $1$ such that  $\sigma(e_{i,j})=[(I+2X_{i,j})E_{i,j}]$. Let $X_{i,j}=(x_{\ell,m}^{i,j})$, where $x_{\ell,m}^{i,j}$ denotes the $(\ell,m)$ entry of the matrix $X_{i,j}$. 

Since $e_{i,j}$ has order $2$, we have
\[\sigma(e_{i,j}^2)=\sigma(e_{i,j})^2=[(I+2X_{i,j})E_{i,j}]^2=[I].\]
That is to say
\[((I+2X_{i,j})E_{i,j})^2 = I\text{ or }-I.\]
Since we are working over $\mathbb{Z}/4\mathbb{Z}$, after expanding and simplifying the left hand side, we get
\[I+2A_{i,j}+2A_{i,j}X_{i,j}+2XA_{i,j}+2A_{i,j}X_{i,j}A_{i,j}= I\text{ or }-I.\]
Therefore, we have
\begin{align}\label{eq1}
2A_{i,j}+2A_{i,j}X_{i,j}+2X_{i,j}A_{i,j}+2A_{i,j}X_{i,j}A_{i,j}= \mathbf{0}\text{ or }2I \text{ over }\mathbb{Z}/4\mathbb{Z}.
\end{align}
By examining the left hand side of \eqref{eq1}, the diagonal entries are all $0$ except for the $(i,i)$ entry and the $(j,j)$ entry. Thus, when $n\geq 4$, the only possibility is 
\begin{align}\label{eq2}
2A_{i,j}+2A_{i,j}X_{i,j}+2X_{i,j}A_{i,j}+2A_{i,j}X_{i,j}A_{i,j}= \mathbf{0}\text{ over }\mathbb{Z}/4\mathbb{Z}.
\end{align}
This gives us that the $i$-th column and $j$-th row of $X_{i,j}$ are all $0$ except for the $(i,i)$ entry and the $(j,j)$ entry. Moreover, we have $2x_{i,i}^{i,j}+2x_{j,j}^{i,j}\equiv 2$ mod $4$, or equivalently, 
\begin{align}\label{trace=1}
x_{i,i}^{i,j}+x_{j,j}^{i,j}\equiv 1 \mod 2.
\end{align}
On the other hand, since
\[e_{i,j}e_{u,v}=e_{u,v}e_{i,j}\quad\text{ for } j\neq u, \text{ and } i\neq v,\]
we have 
\begin{align}\label{eq3}
[(I+2X_{i,j})E_{i,j}][(I+2X_{u,v})E_{u,v}] = [(I+2X_{u,v})E_{u,v}][(I+2X_{i,j})E_{i,j}].
\end{align}
After expanding the equation \eqref{eq3}, we have
\[[E_{i,j}E_{u,v}+2X_{i,j}E_{i,j}E_{u,v}+2E_{i,j}X_{u,v}E_{u,v}]=[E_{u,v}E_{i,j}+2X_{u,v}E_{u,v}E_{i,j}+2E_{u,v}X_{i,j}E_{i,j}].\]
When $n\geq 6$, there exists an integer $s$ which is distinct from $i,j,u$ and $v$. Computing the $(s,s)$ entry of both sides, we get
\begin{align*}
1+2x_{s,s}^{i,j}+2x_{s,s}^{u,v}&=1+2x_{s,s}^{u,v}+2x_{s,s}^{i,j}, \text{ or}\\
1+2x_{s,s}^{i,j}+2x_{s,s}^{u,v}&=-(1+2x_{s,s}^{u,v}+2x_{s,s}^{i,j}) \text{ over }\mathbb{Z}/4\mathbb{Z}.
\end{align*} 
The only possibility is the first case which implies
\begin{align}\label{eq4}
2X_{i,j}E_{i,j}E_{u,v}+2E_{i,j}X_{u,v}E_{u,v}=2X_{u,v}E_{u,v}E_{i,j}+2E_{u,v}X_{i,j}E_{i,j}\text{ over }\mathbb{Z}/4\mathbb{Z}.
\end{align}
Examining the $(u,v)$ entry of equation \eqref{eq4} gives us
\[x_{u,u}^{i,j}+x_{v,v}^{i,j}\equiv 0\mod 2\]
Taking $u,v$ in pairs and distinct from $i,j$, using \eqref{trace=1}, we can conclude that tr$(X_{i,j})\equiv 1$ mod $2$. This contradicts the fact that $X_{i,j}\in  M^0_n(\mathbb{Z}/4\mathbb{Z})$.
\end{proof}
\begin{remark}
When $n=4$, the map $\mathrm{PSL}_4(\mathbb{Z}/4\mathbb{Z}) \rightarrow \mathrm{PSL}_4(\mathbb{Z}/2\mathbb{Z})$ does split. For reasons of  space, we do not include the splitting here.
\end{remark}

\subsection{The case $p\geq 5$} \label{p=5}
Now we would like to prove Proposition \ref{prop1} by discussing the cases $p\geq 5$ and $p=3$ separately. Consider a prime order $f\in\mathrm{Aut}(F_n)$. Proposition \ref{collapse} enables us to study $f$  by analyzing a good realization $\tilde{f}$ of a based graph $(X, v_0)$. Consider a good realization $\tilde{f}:(X, v_0)\rightarrow(X, v_0)$. Recall the property of a good realization: the automorphism $\tilde{f}$ fixes all vertices of $X$ and the only edges fixed by $\tilde{f}$ are loops. The map $\tilde{f}$ acts on the edges of $(X,v_0)$ as follows:
\begin{enumerate}
\item For each vertex $v$, the loops based at $v$ are permuted. Some of them might be fixed.
\item For each pair of distinct vertices $v$ and $w$, the edges joining $v$ and $w$ are permuted. None of them are fixed.
\end{enumerate}
See Examples \ref{exR_5} and \ref{exH_5} for illustration. 

A \textit{rose with $n$ petals} $R_n$ is the graph that consists of one vertex and $n$ loops at the vertex. A \textit{hairy graph with $n$ edges} $H_n$ is the graph that consists of two vertices and $n$ edges connecting these two vertices. 
\begin{example}\label{exR_5}
\begin{figure}[htbp]
\centering
\begin{minipage}[t]{0.48\textwidth}
\centering
\includegraphics[scale=0.55]{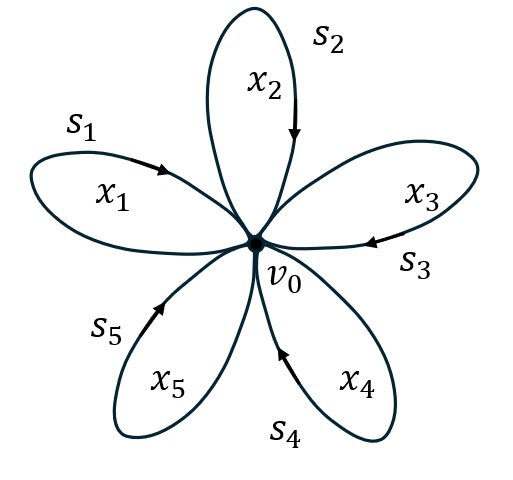}
\caption{rose with five petals $R_5$\label{R_5}}
\end{minipage}
\begin{minipage}[t]{0.48\textwidth}
\centering
\includegraphics[scale=0.55]{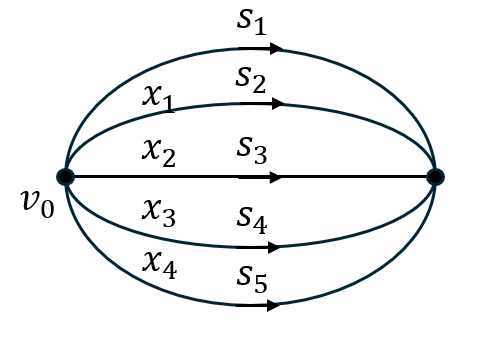}
\caption{hairy graph $H_5$\label{H_5}}
\end{minipage}
\end{figure}
In Figure \ref{R_5}, the action of $\tilde{f}$ on $R_5$ is $\tilde{f}(s_i)=s_{i+1}$ and the basis of $\pi_1(R_5,v_0)$ is 
\[x_1=s_1, \quad x_2=s_2,\quad x_3=s_3, \quad x_4=s_4,\quad x_5=s_5.\]
Thus, the action of $f$ and $f^{-1}$ on $\pi_1(R_5,v_0)$ satisfy:
\[
f(x_i)=x_{i+1},\quad f^{-1}(x_{i+1})=x_{i},\quad 1\leq i \leq 5.
\]
Consider the map $\Phi_1 = L_{x_1x_2}^{-1}fL_{x_1x_2}f^{-1}$ and the map $\Phi_2 = L_{x_1x_2}fL_{x_1x_2}^{-1}f^{-1}$. Both $\Phi_1$ and $\Phi_2$ are in  $N(f)$. By direct computation, we see that $\Phi_1 \circ \Phi_2 = L_{x_1x_3}$. Since the image of $f$ in $\mathrm{GL}_5(\mathbb{Z})$ is the matrix
\[
\begin{pmatrix}
0 & 0 & 0 & 0 & 1 \\
1 & 0 & 0 & 0 & 0 \\
0 & 1 & 0 & 0 & 0 \\
0 & 0 & 1 & 0 & 0 \\
0 & 0 & 0 & 1 & 0 \\
\end{pmatrix},
\]
which has determinant $1$, Lemma \ref{left multi} implies $N(f) = \mathrm{SAut}(F_5)$.
\end{example}
\begin{example}\label{exH_5}
In Figure \ref{H_5}, the action of $\tilde{f}$ on $H_5$ is $\tilde{f}(s_i) = s_{i+1}$ as well and the basis of $\pi_1(H_5, v_0)$ is
\[ x_1 = s_1s_2^{-1}, \quad x_2 = s_2s_3^{-1}, \quad x_3 = s_3s_4^{-1}, \quad x_4 = s_4s_5^{-1}. \]
Thus, the action of $f$ and $f^{-1}$ on $\pi_1(H_5, v_0)$ satisfy:
\begin{align*}
f:&x_i \mapsto x_{i+1}, \quad \text{for } 1\leq i\leq 3, &f^{-1}:&x_1 \mapsto x_4^{-1}x_3^{-1}x_2^{-1}x_1^{-1} \\
    &x_4 \mapsto x_4^{-1}x_3^{-1}x_2^{-1}x_1^{-1} &         &x_{i+1} \mapsto x_i,\quad \text{for } 1\leq i\leq 3.
\end{align*}
We still consider the map $\Phi_1 = L_{x_1x_2}^{-1}fL_{x_1x_2}f^{-1}$ and the map $\Phi_2 = L_{x_1x_2}fL_{x_1x_2}^{-1}f^{-1}$. Both $\Phi_1$ and $\Phi_2$ are in  $N(f)$. The composition of these two maps gives us $\Phi_1 \circ \Phi_2 = L_{x_1x_3}$. Since $\det(\bar{f})=1$, Lemma \ref{left multi} implies $N(f) = \mathrm{SAut}(F_4)$.
\end{example}

\begin{proof}[Proof of Proposition \ref{prop1} for $p\geq 5$]
Let $f$ be an order $p\geq 5$ element in $\mathrm{Aut}(F_n)$ and $\tilde{f}\in\mathrm{Aut}(X,v_0)$ be a good realization of $f$. 
\begin{case}\label{p>=5, case1}
If $X$ has only one vertex $v_0$, then $X$ is a rose $R_n$. Some of the loops in $X=R_n$ are permuted by $\tilde{f}$ and some are fixed. Assume that $pk$ loops are permuted, so $n-pk$ are fixed. We can then label the permuted loops as $s_{i,j}$ with $1 \leq i \leq p$ and $1 \leq j \leq k$ such that $\tilde{f}(s_{i,j})=s_{i+1,j}$. We can label the fixed loops as $\ell_m$ with $1\leq m\leq n-pk$ so $\tilde{f}(\ell_m) = \ell_m$. Consider the following basis of $\pi_1(X, v_0)$:
\begin{align*}
x_{i,j} &= s_{i,j} \quad 1 \leq i \leq p, 1 \leq j \leq k,\\
x_m &= \ell_m \quad 1\leq m\leq n-pk.
\end{align*}
The action of $f$ and $f^{-1}$ on $\mathrm{Aut}(F_n)\cong\pi_1(X,v_0)$ satisfy: 
\begin{align*}
f:  &x_{i,j} \mapsto x_{i+1,j} &  f^{-1}:&x_{i+1,j} \mapsto x_{i,j} \\
    &x_m\mapsto x_m & &x_m\mapsto x_m
\end{align*}
where $1 \leq i \leq p$ , $1 \leq j \leq k$ and $1\leq m\leq n-pk$.

Let $\Phi_1 = L_{x_{1,1}x_{2,1}}^{-1}fL_{x_{1,1}x_{2,1}}f^{-1}$ and $\Phi_2 = L_{x_{1,1}x_{2,1}}fL_{x_{1,1}x_{2,1}}^{-1}f^{-1}$. The maps $\Phi_1$ and $\Phi_2$ are in $N(f)$. We calculate $\Phi_1(x_{1,1})$ and $\Phi_1(x_{2,1})$ as follows:
\begin{align*}
     & x_{1,1} \xmapsto{f^{-1}} x_{p,1} \xmapsto{L_{x_{1,1}x_{2,1}}} x_{p,1} \xmapsto{ f } x_{1,1} \xmapsto{L_{x_{1,1}x_{2,1}}^{-1}} x_{2,1}^{-1}x_{1,1}, \\
     & x_{2,1} \xmapsto{f^{-1}} x_{1,1} \xmapsto{L_{x_{1,1}x_{2,1}}} x_{2,1}x_{1,1} \xmapsto{ f } x_{3,1}x_{2,1} \xmapsto{L_{x_{1,1}x_{2,1}}^{-1}} x_{3,1}x_{2,1}.
\end{align*}
The map $\Phi_1$ fixes the other basis elements in $\pi_1(X,v_0)$. Similarly, we calculate $\Phi_2(x_{1,1})$ and $\Phi_2(x_{2,1})$ as follows: 
\begin{align*}
     & x_{1,1} \xmapsto{f^{-1}} x_{p,1} \xmapsto{L_{x_{1,1}x_{2,1}}^{-1}} x_{p,1} \xmapsto{ f } x_{1,1} \xmapsto{L_{x_{1,1}x_{2,1}}} x_{2,1}x_{1,1}, \\
     & x_{2,1} \xmapsto{f^{-1}} x_{1,1} \xmapsto{L_{x_{1,1}x_{2,1}}^{-1}} x_{2,1}^{-1}x_{1,1} \xmapsto{ f } x_{3,1}^{-1}x_{2,1} \xmapsto{L_{x_{1,1}x_{2,1}}} x_{3,1}^{-1}x_{2,1}.
\end{align*}
The map $\Phi_2$ fixes the other basis elements in $\pi_1(X,v_0)$. Consider 
\[ \Phi_1 \circ \Phi_2 = L_{x_{1,1}x_{2,1}}^{-1}fL_{x_{1,1}x_{2,1}}f^{-1} \circ L_{x_{1,1}x_{2,1}}fL_{x_{1,1}x_{2,1}}^{-1}f^{-1}. \]
By direct computation, 
\begin{align*}
     & x_{1,1} \xmapsto{\Phi_2} x_{2,1}x_{1,1} \xmapsto{\Phi_1} x_{3,1}x_{1,1},\\
     & x_{2,1} \xmapsto{\Phi_2} x_{3,1}^{-1}x_{2,1} \xmapsto{\Phi_1} x_{2,1},
\end{align*}
and other basis elements are fixed. We can see that $\Phi_1 \circ \Phi_2 = L_{x_{1,1}x_{3,1}}\in N(f)$. The image of $f$ in $\mathrm{GL}_n(\mathbb{Z})$ is the direct sum $M^k\oplus I_{n-pk}$, where $I$ is the identity matrix and $M$ is the $p$ by $p$ matrix
\[
\begin{pmatrix}
0 & 0 & 0 &\ldots & 0 & 1 \\
1 & 0 & 0 &\ldots & 0 & 0 \\
0 & 1 & 0 &\ldots & 0 & 0 \\
0 & 0 & 1 &\ldots & 0 & 0 \\
0 & 0 & 0 &\ldots & 0 & 0 \\
\vdots&\vdots&\vdots&&\vdots&\vdots\\
0 & 0 & 0 &\ldots & 1 & 0 \\
\end{pmatrix}.
\]
Then $\det(\bar{f})=(-1)^{p-1}=1$. By Lemma \ref{left multi}, $N(f)=\mathrm{SAut}(F_n)$.
\end{case}
\begin{case}\label{p>=5, case2}
If $X$ has an edge that is not a loop, then $X$ contains a hairy graph $H_{pk}$ as a subgraph. By replacing $f\in \mathrm{Aut}(F_n)$ by a conjugate element, we can assume our basepoint $v_0$ lies in the subgraph $H_{pk}$ of $X$. We denote the other vertex of $H_{pk}$ to be $v_1$ and label the edges of $H_{pk}$ as $s_{i,j},(1 \leq i \leq p, 1 \leq j \leq k)$ such that $\tilde{f}(s_{i,j}) = s_{i+1,j}$ for $1\leq j\leq k $. Consider the following basis of $\pi_1(H_{pk}, v_0)$:
\begin{align*}
    x_{i,j} = s_{i,j}s_{i+1,j}^{-1} & \quad 1 \leq i \leq p-1, 1 \leq j \leq k; \\ x_{p,j} = s_{p,j}s_{1,j+1}^{-1} & \quad 1 \leq j \leq k-1.
\end{align*}
Notice that $x_{p,j}$ exists if and only if $k \geq 2$. The actions of $f|_{\pi_1(H_{pk}, v_0)}$ and $f^{-1}|_{\pi_1(H_{pk}, v_0)}$  satisfy: 
\begin{align*}
f|_{\pi_1(H_{pk}, v_0)}:  &x_{i,j} \mapsto x_{i+1,j} &  f^{-1}|_{\pi_1(H_{pk}, v_0)}:&x_{1,j} \mapsto x_{p-1,j}^{-1}x_{p-2,j}^{-1}\ldots x_{1,j}^{-1} \\
     &x_{p-1,j} \mapsto x_{p-1,j}^{-1}x_{p-2,j}^{-1}\ldots x_{1,j}^{-1} &         &x_{i+1,j} \mapsto x_{i,j} \\
    &x_{p,j} \mapsto x_{1,j}\ldots x_{p,j}x_{1,j+1} &         &x_{p,j} \mapsto x_{p-1,j}x_{p,j}x_{1,j+1}\ldots x_{p-1,j+1}
\end{align*}
where $1 \leq i \leq p-1$, $1\leq j\leq k $.

\begin{subcase}\label{A connected}
Let $A = X \setminus (\text{edges in } H_{pk})$. We first deal with the case where $A$ is connected. Let $\gamma_0$ be a path in $A$ connecting the vertices $v_1$ and $v_0$. See Figure \ref{H_10 A connected} for an illustration. We claim that
\[ \pi_1(X, v_0) = \pi_1(H_{pk}, v_0) * \pi_1(A,v_0) * \Gamma,\quad\text{where }\Gamma\cong <s_{p,1}\gamma_0>.\]
\begin{proof}
Let $A'=A\cup s_{p,1}$. Since $X=H_{pk}\cup A'$ and $H_{pk}\cap A'=s_{p,1}$ which is contractible, it follows that 
\[ \pi_1(X, v_0) = \pi_1(H_{pk}, v_0) * \pi_1(A',v_0).\]
Let $Z= s_{p,1}\cup\gamma_0$. Then, $A'=A\cup Z$ and $A\cap Z = \gamma_0$ which is contractible, so 
\[ \pi_1(A', v_0) = \pi_1(A, v_0) * \pi_1(Z,v_0)= \pi_1(A, v_0) * \mathbb{Z}.\]
Thus, we prove the claim.
\end{proof}

\begin{figure}[htbp]
    \centering
    \includegraphics[scale=0.4]{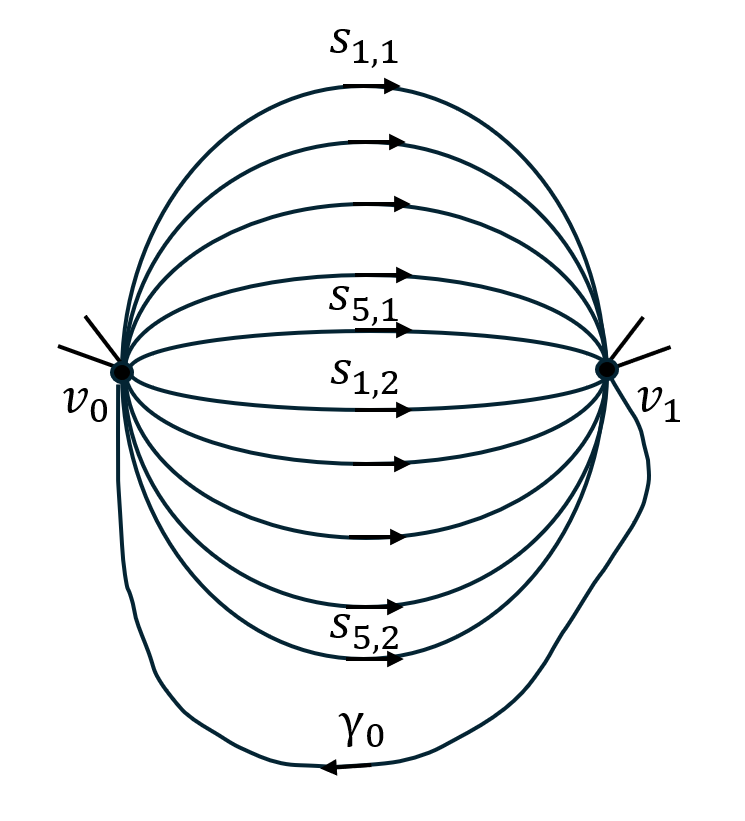}
    \caption{subgraph $H_{10}$  and subgraph $A$ is connected\label{H_10 A connected}}
\end{figure} 

Since the automorphism $\tilde{f}$ fixes all vertices,  the automorphism $f$ preserves $\pi_1(H_{pk}, v_0)$ and $\pi_1(A, v_0)$. Let $\Phi_1 = L_{x_{1,1}x_{2,1}}^{-1}fL_{x_{1,1}x_{2,1}}f^{-1}$ and $\Phi_2 = L_{x_{1,1}x_{2,1}}fL_{x_{1,1}x_{2,1}}^{-1}f^{-1}$. The maps $\Phi_1$ and $\Phi_2$ are in $N(f)$. We calculate $\Phi_1(x_{1,1})$ and $\Phi_1(x_{2,1})$ as follows:
\begin{align*}
    & x_{1,1} \xmapsto{f^{-1}} x_{p-1,1}^{-1}\ldots x_{1,1}^{-1} \xmapsto{L_{x_{1,1}x_{2,1}}} x_{p-1,1}^{-1}\ldots x_{2,1}^{-1}x_{1,1}^{-1}x_{2,1}^{-1} \xmapsto{f} x_{1,1}x_{3,1}^{-1} \xmapsto{L_{x_{1,1}x_{2,1}}^{-1}} x_{2,1}^{-1}x_{1,1}x_{3,1}^{-1}, \\
    & x_{2,1} \xmapsto{f^{-1}} x_{1,1} \xmapsto{L_{x_{1,1}x_{2,1}}} x_{2,1}x_{1,1} \xmapsto{f} x_{3,1}x_{2,1} \xmapsto{L_{x_{1,1}x_{2,1}}^{-1}} x_{3,1}x_{2,1}.
\end{align*}
The map $\Phi_1$ fixes the other basis elements in $\pi_1(H_{pk},v_0)$. Similarly, we calculate $\Phi_2(x_{1,1})$ and $\Phi_2(x_{2,1})$ as follows: 
\begin{align*}
    & x_{1,1} \xmapsto{f^{-1}} x_{p-1,1}^{-1}\ldots x_{1,1}^{-1} \xmapsto{L_{x_{1,1}x_{2,1}}^{-1}} x_{p-1,1}^{-1}\ldots x_{2,1}^{-1}x_{1,1}^{-1}x_{2,1} \xmapsto{f} x_{1,1}x_{3,1} \xmapsto{L_{x_{1,1}x_{2,1}}} x_{2,1}x_{1,1}x_{3,1}, \\
    & x_{2,1} \xmapsto{f^{-1}} x_{1,1} \xmapsto{L_{x_{1,1}x_{2,1}}^{-1}} x_{2,1}^{-1}x_{1,1} \xmapsto{f} x_{3,1}^{-1}x_{2,1} \xmapsto{L_{x_{1,1}x_{2,1}}} x_{3,1}^{-1}x_{2,1}.
\end{align*}
The map $\Phi_2$ fixes the other basis elements in $\pi_1(H_{pk},v_0)$.  Consider 
\[
\Phi_1 \circ \Phi_2 = L_{x_{1,1}x_{2,1}}^{-1}fL_{x_{1,1}x_{2,1}}f^{-1} \circ L_{x_{1,1}x_{2,1}}fL_{x_{1,1}x_{2,1}}^{-1}f^{-1}\in N(f).
\]
Then $(\Phi_1 \circ \Phi_2)|_{\pi_1(H_{pk},v_0)} =L_{x_{1,1}x_{3,1}}$ since
\begin{align*}
     & x_{1,1} \xmapsto{\Phi_2} x_{2,1}x_{1,1}x_{3,1} \xmapsto{\Phi_1} x_{3,1}x_{1,1},\\
     & x_{2,1} \xmapsto{\Phi_2} x_{3,1}^{-1}x_{2,1} \xmapsto{\Phi_1} x_{2,1},
\end{align*}
and it fixes the other basis elements of $\pi_1(H_{pk},v_0)$. The above computation is where we use that $p\geq 5$ since it relies on the fact that $f(x_{2,1})=x_{3,1}$.  Since $L_{x_1x_2}$ acts trivially on $\pi_1(A,v_0)$, so we have
\[
    (\Phi_1 \circ \Phi_2)|_{\pi_1(A,v_0)} =(ff^{-1}\circ ff^{-1})|_{\pi_1(A,v_0)} = \mathrm{Id}_{\pi_1(A,v_0)}.
\]
Since 
\[
    \tilde{f}^{-1}(s_{p,1}\gamma_0) = \tilde{f}^{-1}(s_{p,1})\tilde{f}^{-1}(\gamma_0)=s_{p-1,1}\tilde{f}^{-1}(\gamma_0)=(s_{p-1,1}s_{p,1}^{-1})(s_{p,1}\gamma_0)(\gamma_0^{-1}\tilde{f}^{-1}(\gamma_0)),
\]
it follows that on $\Gamma$, we have
\begin{align}\label{finverse on gamma}
     f^{-1}(s_{p,1}\gamma_0)=(s_{p-1,1}s_{p,1}^{-1})(s_{p,1}\gamma_0)(\gamma_0^{-1}\tilde{f}^{-1}(\gamma_0)) =  x_{p-1,1}(s_{p,1}\gamma_0)(\gamma_0^{-1}\tilde{f}^{-1}(\gamma_0)).
\end{align}
Using the equation \eqref{finverse on gamma}, 
\[f(x_{p-1,1}(s_{p,1}\gamma_0)(\gamma_0^{-1}\tilde{f}^{-1}(\gamma_0)))  = f(f^{-1}(s_{p,1}\gamma_0)) = s_{p,1}\gamma_0.\]
Then we calculate the map $\Phi_1$ and $\Phi_2$ acting on $\Gamma$:
\begin{align*}
\Phi_1|_\Gamma: s_{p,1}\gamma_0 &\xmapsto{f^{-1}} x_{p-1,1}(s_{p,1}\gamma_0)(\gamma_0^{-1}\tilde{f}^{-1}(\gamma_0))\\              &\xmapsto{L_{x_{1,1}x_{2,1}}^{-1}} x_{p-1,1}(s_{p,1}\gamma_0)(\gamma_0^{-1}\tilde{f}^{-1}(\gamma_0)) \\
&\xmapsto{f} s_{p,1}\gamma_0 \\
&\xmapsto{L_{x_{1,1}x_{2,1}}} s_{p,1}\gamma_0;
\end{align*}
\begin{align*}
\Phi_2|_\Gamma: s_{p,1}\gamma_0 &\xmapsto{f^{-1}} x_{p-1,1}(s_{p,1}\gamma_0)(\gamma_0^{-1}\tilde{f}^{-1}(\gamma_0))\\
& \xmapsto{L_{x_{1,1}x_{2,1}}}x_{p-1,1}(s_{p,1}\gamma_0)(\gamma_0^{-1}\tilde{f}^{-1}(\gamma_0))\\
&\xmapsto{f} s_{p,1}\gamma_0 \\
&\xmapsto{L_{x_{1,1}x_{2,1}}^{-1}} s_{p,1}\gamma_0.     
\end{align*}
This implies that $(\Phi_1 \circ \Phi_2)_{\Gamma} = \text{Id}_{\Gamma}$. Thus, $\Phi_1 \circ \Phi_2 = L_{x_{1,1}x_{3,1}}$ on $\pi_1(X,v_0)=F_n$. Since we have $L_{x_{1,1}x_{3,1}}=\Phi_1 \circ \Phi_2\in N(f)$, by Lemma \ref{left multi}, 
\[
    N(f) = 
    \begin{cases}
        \mathrm{SAut}(F_n) & \text{if det}(\bar{f}) = 1, \\
        \mathrm{Aut}(F_n) & \text{if det}(\bar{f}) = -1.
    \end{cases}
\]
\end{subcase}

\begin{subcase}
In this case, $A$ is disconnected. Let $A = A_1 \cup A_2$ where $A_1$ contains the vertex $v_0$ and $A_2$ contains the vertex $v_1$. We have
\[ \pi_1(X, v_0) = \pi_1(H_{pk}, v_0) * \pi_1(A_1, v_0) * \Gamma' , \]
where $\Gamma'=\{s_{p,1}\gamma s_{p,1}^{-1}|\gamma\in\pi_1(A_2, v_1)\}$. See Figure \ref{H_5 A disconnected} for an illustration. 
\begin{figure}[htbp]
    \centering
    \includegraphics[scale=0.4]{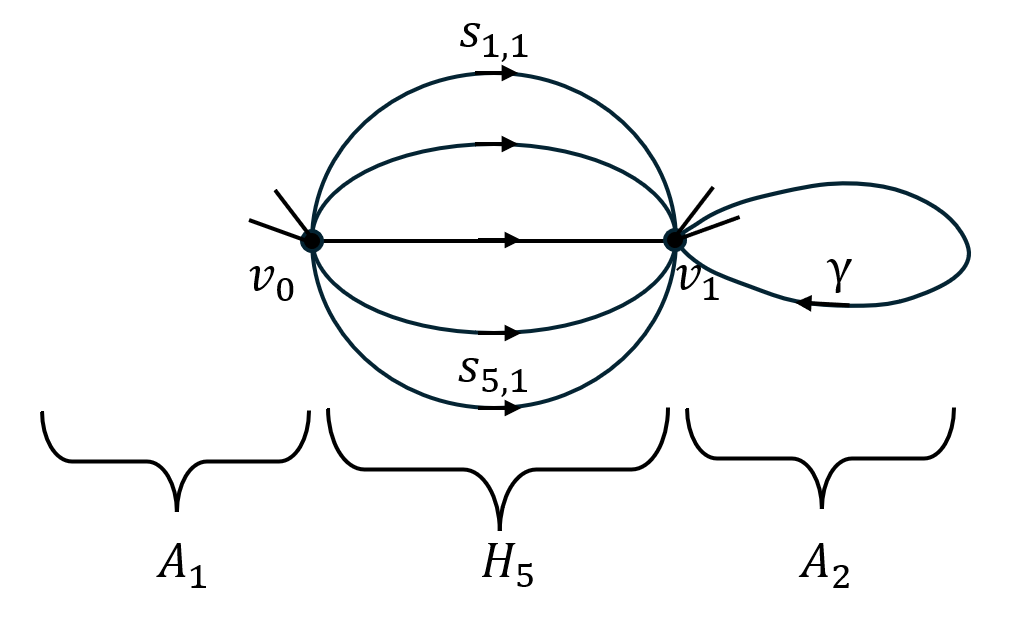}
    \caption{subgraph $H_{5}$  and subgraph $A$ is disconnected\label{H_5 A disconnected}}
\end{figure}
Since the automorphism $\tilde{f}$ fixes all vertices,  the automorphism $f$ preserves $\pi_1(H_{pk}, v_0)$ and $\pi_1(A_1, v_0)$.  Using same strategy as in the Case \ref{A connected}, we consider
\[
    \Phi_1 \circ \Phi_2 = L_{x_{1,1}x_{2,1}}^{-1}fL_{x_{1,1}x_{2,1}}f^{-1} \circ L_{x_{1,1}x_{2,1}}fL_{x_{1,1}x_{2,1}}^{-1}f^{-1}.
\]
Then
\begin{align*}
    (\Phi_1 \circ \Phi_2)|_{\pi_1(H_{pk}, v_0)} &=L_{x_{1,1}x_{3,1}},\text{ and}\\
    (\Phi_1 \circ \Phi_2)|_{\pi_1(A_1, v_0)} &=(ff^{-1}\circ ff^{-1})|_{\pi_1(A_1, v_0)} = \text{Id}_{\pi_1(A_1, v_0)}.
\end{align*}
Since
\[
    \tilde{f}^{-1}(s_{p,1}\gamma s_{p,1}^{-1}) = \tilde{f}^{-1}(s_{p,1})\tilde{f}^{-1}(\gamma)\tilde{f}^{-1}(s_{p,1}^{-1})=s_{p-1,1}\tilde{f}^{-1}(\gamma)s_{p-1,1}^{-1}
\]
it follows that on $\Gamma'$,  we have 
\begin{align}\label{finverse on gamma2}
     f^{-1}(s_{p,1}\gamma s_{p,1}^{-1})=x_{p-1,1}(s_{p,1}\tilde{f}(\gamma) s_{p,1}^{-1})x_{p-1,1}^{-1}, \quad\text{where }s_{p,1}\tilde{f}(\gamma) s_{p,1}^{-1}\in \Gamma'.
\end{align}
Using the equation  \eqref{finverse on gamma2},
\[f(x_{p-1,1}(s_{p,1}\tilde{f}(\gamma) s_{p,1}^{-1})x_{p-1,1}^{-1})  = f(f^{-1}(s_{p,1}\gamma s_{p,1}^{-1}) = s_{p,1}\gamma s_{p,1}^{-1}.\]
We calculate the map $\Phi_1$ and $\Phi_2$ acting on $\Gamma'$:
\begin{align*}
\Phi_1|_{\Gamma'}: s_{p,1}\gamma s_{p,1}^{-1}&\xmapsto{f^{-1}} x_{p-1,1}(s_{p,1}\tilde{f}(\gamma) s_{p,1}^{-1})x_{p-1,1}^{-1} \\
&\xmapsto{L_{x_{1,1}x_{2,1}}^{-1}} x_{p-1,1}(s_{p,1}\tilde{f}(\gamma) s_{p,1}^{-1})x_{p-1,1}^{-1} \\
&\xmapsto{f} s_{p,1}\gamma s_{p,1}^{-1}\\
&\xmapsto{L_{x_{1,1}x_{2,1}}} s_{p,1}\gamma s_{p,1}^{-1};\\
\Phi_2|_{\Gamma'}: s_{p,1}\gamma s_{p,1}^{-1}&\xmapsto{f^{-1}} x_{p-1,1}(s_{p,1}\tilde{f}(\gamma) s_{p,1}^{-1})x_{p-1,1}^{-1} \\
&\xmapsto{L_{x_{1,1}x_{2,1}}} x_{p-1,1}(s_{p,1}\tilde{f}(\gamma) s_{p,1}^{-1})x_{p-1,1}^{-1} \\
&\xmapsto{f} s_{p,1}\gamma s_{p,1}^{-1}\\
&\xmapsto{L_{x_{1,1}x_{2,1}}^{-1}} s_{p,1}\gamma s_{p,1}^{-1}.     
\end{align*}
This implies that $(\Phi_1 \circ \Phi_2)|_{\Gamma'} = \text{Id}_{\Gamma'}$. Hence, $\Phi_1 \circ \Phi_2 = L_{x_{1,1}x_{3,1}}$ on $\pi_1(X,v_0)=F_n$. Since we have $L_{x_{1,1}x_{3,1}}=\Phi_1 \circ \Phi_2\in N(f)$, by Lemma \ref{left multi}, 
\[
    N(f) = 
    \begin{cases}
        \mathrm{SAut}(F_n) & \text{if det}(\bar{f}) = 1, \\
        \mathrm{Aut}(F_n) & \text{if det}(\bar{f}) = -1.
    \end{cases}
\]
\end{subcase}
\end{case}
\end{proof}

\subsection{The case $p=3$}
Now we prove Proposition \ref{prop1} when $p = 3$.

\begin{proof}
Let $f$ be an order $3$ element in $\mathrm{Aut}(F_n)$ and $\tilde{f}\in\mathrm{Aut}(X,v_0)$ be a good realization of $f$. 
\setcounter{case}{0}
\begin{case}
If $X$ has only one vertex $v_0$, then the argument is identical to the proof of Case \ref{p>=5, case1} of the case  $p\geq 5$ of Proposition \ref{prop1}, which works for $p=3$.
\end{case}
\begin{case}
If $X$ has an edge that is not a loop, then $X$ contains a hairy graph $H_{3k}$ as a subgraph. By replacing $f\in \mathrm{Aut}(F_n)$ by a conjugate element, we can assume our basepoint $v_0$ lies in the subgraph $H_{3k}$ of $X$. We denote the other vertex of $H_{3k}$ by $v_1$ and label the edges of $H_{3k}$ as $s_{i,j},(1 \leq i \leq 3, 1 \leq j \leq k)$ such that $\tilde{f}(s_{i,j}) = s_{i+1,j}$ for $1\leq j\leq k $. Consider the following basis of $\pi_1(H_{3k}, v_0)$:
\begin{align*}
    x_{i,j} = s_{i,j}s_{i+1,j}^{-1} & \quad i=1,2, \quad 1 \leq j \leq k; \\ 
    x_{3,j} = s_{3,j}s_{1,j+1}^{-1} & \quad 1 \leq j \leq k-1.
\end{align*}
Notice that $x_{3,j}$ exists if and only if $k \geq 2$. The actions of $f|_{\pi_1(H_{3k}, v_0)}$ and $f^{-1}|_{\pi_1(H_{3k}, v_0)}$  satisfy: 
\begin{align*}
f|_{\pi_1(H_{3k}, v_0)}:  &x_{1,j} \mapsto x_{2,j} &  f^{-1}|_{\pi_1(H_{3k}, v_0)}:&x_{1,j} \mapsto x_{2,j}^{-1}x_{1,j}^{-1} \\
     &x_{2,j} \mapsto x_{2,j}^{-1}x_{1,j}^{-1} &         &x_{2,j} \mapsto x_{1,j} \\
    &x_{3,j} \mapsto x_{1,j}x_{2,j}x_{3,j}x_{1,j+1} &         &x_{3,j} \mapsto x_{2,j}x_{3,j}x_{1,j+1}x_{2,j+1}
\end{align*}
where $1\leq j\leq k $.

Let $A = X \setminus (\text{edges in } H_{3k})$. We first consider the situation when $n\neq 4$. 

\begin{subcase}\label{A connected2}
In this case, $A$ is connected and $n\neq 4$. Let $\gamma_0$ be a path in $A$ connecting the vertices $v_1$ and $v_0$. See Figure \ref{H_6 A connected} for an illustration. We claim that
\[ \pi_1(X, v_0) = \pi_1(H_{3k}, v_0) * \pi_1(A,v_0) * \Gamma,\quad\text{where }\Gamma\cong <s_{3,1}\gamma_0>.\]
The argument is identical to the proof in Case \ref{A connected} of the case  $p\geq 5$ of Proposition \ref{prop1}.

\begin{figure}[htbp]
    \centering
    \includegraphics[scale=0.4]{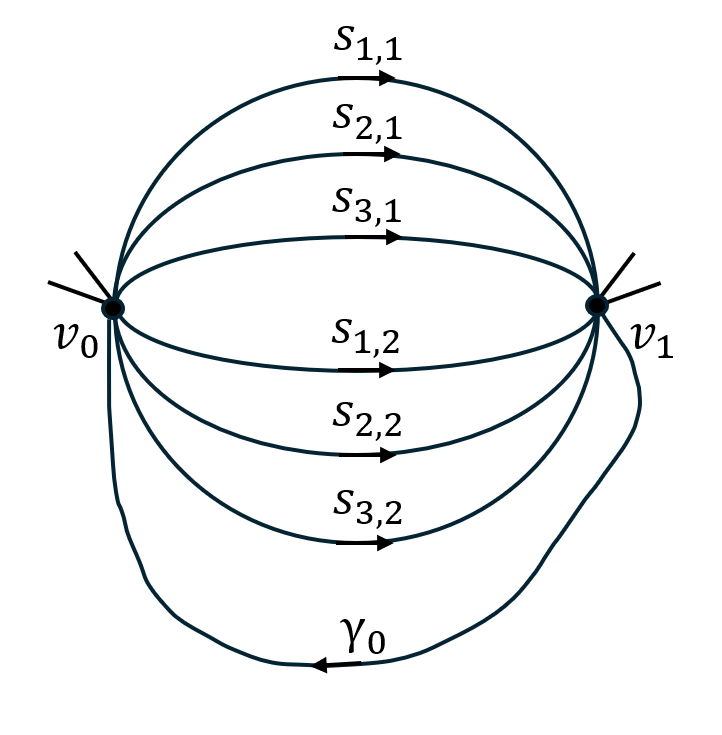}
    \caption{subgraph $H_{6}$  and subgraph $A$ is connected\label{H_6 A connected}}
\end{figure} 

Since the automorphism $\tilde{f}$ fixes all vertices,  the automorphism $f$ preserves $\pi_1(H_{3k}, v_0)$ and $\pi_1(A, v_0)$. Let $\Phi_1 = fL_{x_{1,1}x_{2,1}}f^{-1}L_{x_{1,1}x_{2,1}}^{-1}$ and $\Phi_2 = R_{x_{1,1}x_{2,1}}fR_{x_{1,1}x_{2,1}}^{-1}f^{-1}$. The maps $\Phi_1$ and $\Phi_2$ are in $N(f)$. We calculate $\Phi_1(x_{1,1})$, $\Phi_1(x_{2,1})$ and $\Phi_1(x_{3,1})$ as follows. Notice that $x_{3,1}$ only exists if $k\geq 2$.
\begin{align*}
    & x_{1,1} \xmapsto{L_{x_{1,1}x_{2,1}}^{-1}} x_{2,1}^{-1} x_{1,1} \xmapsto{f^{-1}} x_{1,1}^{-1}x_{2,1}^{-1}x_{1,1}^{-1} \xmapsto{L_{x_{1,1}x_{2,1}}} x_{1,1}^{-1}x_{2,1}^{-1}x_{2,1}^{-1} x_{1,1}^{-1}x_{2,1}^{-1}\xmapsto{f} x_{2,1}^{-1}x_{1,1}x_{2,1}x_{1,1}x_{1,1}x_{2,1}, \\
    & x_{2,1} \xmapsto{L_{x_{1,1}x_{2,1}}^{-1}} x_{2,1} \xmapsto{f^{-1}} x_{1,1} \xmapsto{L_{x_{1,1}x_{2,1}}} x_{2,1}x_{1,1} \xmapsto{f} x_{2,1}^{-1}x_{1,1}^{-1}x_{2,1}, \\
     & x_{3,1} \xmapsto{L_{x_{1,1}x_{2,1}}^{-1}} x_{3,1} \xmapsto{f^{-1}} x_{2,1}x_{3,1}x_{1,2}x_{2,2} \xmapsto{L_{x_{1,1}x_{2,1}}} x_{2,1}x_{3,1}x_{1,2}x_{2,2} \xmapsto{f} x_{3,1}. 
\end{align*}
The map $\Phi_1$ fixes the other basis elements in $\pi_1(H_{3k},v_0)$. Similarly, we calculate $\Phi_2(x_{1,1})$, $\Phi_2(x_{2,1})$ and $\Phi_2(x_{3,1})$ as follows: 
\begin{align*}
    & x_{1,1} \xmapsto{f^{-1}} x_{2,1}^{-1}x_{1,1}^{-1} \xmapsto{R_{x_{1,1}x_{2,1}}^{-1}} x_{1,1}^{-1} \xmapsto{f} x_{2,1}^{-1} \xmapsto{R_{x_{1,1}x_{2,1}}} x_{2,1}^{-1}, \\
    & x_{2,1} \xmapsto{f^{-1}} x_{1,1} \xmapsto{R_{x_{1,1}x_{2,1}}^{-1}} x_{1,1}x_{2,1}^{-1} \xmapsto{f} x_{2,1}x_{1,1}x_{2,1} \xmapsto{R_{x_{1,1}x_{2,1}}} x_{2,1}x_{1,1}x_{2,1}x_{2,1},\\
    & x_{3,1} \xmapsto{f^{-1}} x_{2,1}x_{3,1}x_{1,2}x_{2,2} \xmapsto{R_{x_{1,1}x_{2,1}}^{-1}} x_{2,1}x_{3,1}x_{1,2}x_{2,2} \xmapsto{f} x_{3,1} \xmapsto{R_{x_{1,1}x_{2,1}}} x_{3,1}.
\end{align*}
The map $\Phi_2$ fixes the other basis elements in $\pi_1(H_{3k},v_0)$.  Consider 
\[
\Phi_1 \circ \Phi_2 = fL_{x_{1,1}x_{2,1}}f^{-1}L_{x_{1,1}x_{2,1}}^{-1}\circ R_{x_{1,1}x_{2,1}}fR_{x_{1,1}x_{2,1}}^{-1}f^{-1}\in N(f).
\]
Then $(\Phi_1 \circ \Phi_2)|_{\pi_1(H_{pk},v_0)} = C_{x_{1,1}x_{2,1}}^{-1}$ since
\begin{align*}
     & x_{1,1} \xmapsto{\Phi_2} x_{2,1}^{-1} \xmapsto{\Phi_1} x_{2,1}^{-1}x_{1,1}x_{2,1},\\
     & x_{2,1} \xmapsto{\Phi_2} x_{2,1}x_{1,1}x_{2,1}x_{2,1} \xmapsto{\Phi_1} x_{2,1},
\end{align*}
and it fixes the other basis elements of $\pi_1(H_{3k},v_0)$. Since $L_{x_1x_2}$ and $R_{x_1x_2}$ act trivially on $\pi_1(A,v_0)$, so we have
\[
    (\Phi_1 \circ \Phi_2)|_{\pi_1(A,v_0)} =(ff^{-1}\circ ff^{-1})|_{\pi_1(A,v_0)} = \mathrm{Id}_{\pi_1(A,v_0)}.
\]

Since 
\[
    \tilde{f}^{-1}(s_{3,1}\gamma_0) = \tilde{f}^{-1}(s_{3,1})\tilde{f}^{-1}(\gamma_0)=s_{2,1}\tilde{f}^{-1}(\gamma_0)=(s_{2,1}s_{3,1}^{-1})(s_{3,1}\gamma_0)(\gamma_0^{-1}\tilde{f}^{-1}(\gamma_0)),
\]
it follows that on $\Gamma$, we have
\begin{align}\label{finverse on gammap=3}
     f^{-1}(s_{3,1}\gamma_0)=(s_{2,1}s_{3,1}^{-1})(s_{3,1}\gamma_0)(\gamma_0^{-1}\tilde{f}^{-1}(\gamma_0)) =  x_{2,1}(s_{3,1}\gamma_0)(\gamma_0^{-1}\tilde{f}^{-1}(\gamma_0)).
\end{align}
Using the equation \eqref{finverse on gammap=3}, 
\[f(x_{2,1}(s_{3,1}\gamma_0)(\gamma_0^{-1}\tilde{f}^{-1}(\gamma_0)))  = f(f^{-1}(s_{3,1}\gamma_0)) = s_{3,1}\gamma_0.\]
Then we calculate the map $\Phi_1$ and $\Phi_2$ acting on $\Gamma$:
\begin{align*}
\Phi_1|_\Gamma: s_{3,1}\gamma_0 &\xmapsto{L_{x_{1,1}x_{2,1}}^{-1}} s_{3,1}\gamma_0\\              &\xmapsto{f^{-1}} x_{2,1}(s_{3,1}\gamma_0)(\gamma_0^{-1}\tilde{f}^{-1}(\gamma_0)) \\
&\xmapsto{L_{x_{1,1}x_{2,1}}} x_{2,1}(s_{3,1}\gamma_0)(\gamma_0^{-1}\tilde{f}^{-1}(\gamma_0)) \\
&\xmapsto{f} s_{3,1}\gamma_0;
\end{align*}
\begin{align*}
\Phi_2|_\Gamma: s_{3,1}\gamma_0 &\xmapsto{f^{-1}} x_{2,1}(s_{3,1}\gamma_0)(\gamma_0^{-1}\tilde{f}^{-1}(\gamma_0))\\
& \xmapsto{R_{x_{1,1}x_{2,1}}^{-1}}x_{2,1}(s_{3,1}\gamma_0)(\gamma_0^{-1}\tilde{f}^{-1}(\gamma_0))\\
&\xmapsto{f} s_{3,1}\gamma_0 \\
&\xmapsto{R_{x_{1,1}x_{2,1}}} s_{3,1}\gamma_0.     
\end{align*}
This implies that $(\Phi_1 \circ \Phi_2)_{\Gamma} = \text{Id}_{\Gamma}$. Thus, $\Phi_1 \circ \Phi_2 = C_{x_{1,1}x_{2,1}}^{-1}$ on $\pi_1(X,v_0)=F_n$. Consider $\Psi=fI_{x_{1,1}}f^{-1}I_{x_{1,1}}\in N(f)$. By direct computation,
\begin{align*}
&x_{1,1} \xmapsto{I_{x_{1,1}}} x_{1,1}^{-1} \xmapsto{f^{-1}} x_{1,1}x_{2,1}  \xmapsto{I_{x_{1,1}}} x_{1,1}^{-1}x_{2,1}  \xmapsto{f} x_{2,1}^{-1} x_{2,1}^{-1} x_{1,1}^{-1},\\
&x_{2,1} \xmapsto{I_{x_{1,1}}} x_{2,1} \xmapsto{f^{-1}} x_{1,1}  \xmapsto{I_{x_{1,1}}} x_{1,1}^{-1}  \xmapsto{f} x_{2,1}^{-1}
\end{align*}
and it fixes the other generators. Then, $\Psi^2(x_{1,1})=x_{2,1}x_{2,1}x_{1,1}x_{2,1}x_{2,1}$ and it fixes the other generators. For distinct $1\leq i,j\leq n$, let $e_{i,j}$ denote the elementary matrix. Then, when raised to the power $\ell$, the matrix $e_{i,j}^\ell$ has the $(i,j)$ entry equal to $\ell$, diagonal entries 1, and all other entries 0.  Bass–Milnor–Serre \cite{BMS} proved that for $n \geq 3$ and any $l \geq 2$, the group $\Gamma_n(\ell)$ is normally generated by $e_{i,j}^\ell$. Given this foundation, we consider the specific case where $\ell = 4$. Since that the image of $\Psi^2$ in $\mathrm{SL}_n(\mathbb{Z})$ is $e_{2,1}^4$, we conclude that it normally generates $\Gamma_n(4)$. Therefore, $\Gamma_n(4)\subset\overline{N(f)}$ . Additionally, consider $\Phi_1\in N(f)$. The automorphism $\Phi_1$ maps $x_{2,1}$ to $x_{2,1}^{-1}x_{1,1}^{-1}x_{2,1}$ so that the image of $\Phi_1$ in $\mathrm{SL}_n(\mathbb{Z}/2\mathbb{Z})$ is nontrivial. By Lemma \ref{level 4}, 
\[
    N(f) = 
    \begin{cases}
        \mathrm{SAut}(F_n) & \text{if det}(\bar{f}) = 1, \\
        \mathrm{Aut}(F_n) & \text{if det}(\bar{f}) = -1.
    \end{cases}
\]
\end{subcase}

\begin{subcase}
In this case, $A$ is disconnected and $n\neq 4$. Let $A = A_1 \cup A_2$ where $A_1$ contains the vertex $v_0$ and $A_2$ contains the vertex $v_1$. We have
\[ \pi_1(X, v_0) = \pi_1(H_{3k}, v_0) * \pi_1(A_1, v_0) * \Gamma', \]
where $\Gamma'=\{s_{3,1}\gamma s_{3,1}^{-1}|\gamma\in\pi_1(A_2, v_1)\}$. See Figure \ref{H_3 A disconnected} for an illustration. 
\begin{figure}[htbp]
    \centering
    \includegraphics[scale=0.4]{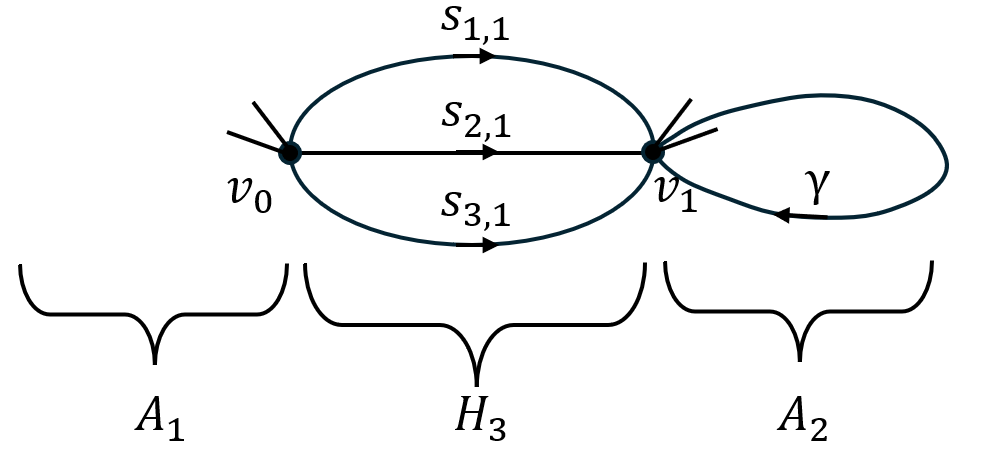}
    \caption{subgraph $H_{3}$  and subgraph $A$ is disconnected\label{H_3 A disconnected}}
\end{figure} 
Since the automorphism $\tilde{f}$ fixes all vertices,  the automorphism $f$ preserves $\pi_1(H_{3k}, v_0)$ and $\pi_1(A_1, v_0)$.  Using same strategy as in the Case \ref{A connected}, we consider
\[
    \Phi_1 \circ \Phi_2 = fL_{x_{1,1}x_{2,1}}f^{-1}L_{x_{1,1}x_{2,1}}^{-1}\circ R_{x_{1,1}x_{2,1}}fR_{x_{1,1}x_{2,1}}^{-1}f^{-1}\in N(f).
\]
Then
\begin{align*}
    (\Phi_1 \circ \Phi_2)|_{\pi_1(H_{3k}, v_0)} &=C_{x_{1,1}x_{2,1}}^{-1},\text{ and}\\
    (\Phi_1 \circ \Phi_2)|_{\pi_1(A_1, v_0)} &=(ff^{-1}\circ ff^{-1})|_{\pi_1(A_1, v_0)} = \text{Id}_{\pi_1(A_1, v_0)}.
\end{align*}
Since
\[
    \tilde{f}^{-1}(s_{3,1}\gamma s_{3,1}^{-1}) = \tilde{f}^{-1}(s_{3,1})\tilde{f}^{-1}(\gamma)\tilde{f}^{-1}(s_{3,1}^{-1})=s_{2,1}\tilde{f}^{-1}(\gamma)s_{2,1}^{-1}
\]
it follows that on $\Gamma'$,  we have 
\begin{align}\label{finverse on gammap=32}
     f^{-1}(s_{3,1}\gamma s_{3,1}^{-1})=x_{2,1}(s_{3,1}\tilde{f}(\gamma) s_{3,1}^{-1})x_{2,1}^{-1}, \quad\text{where }s_{3,1}\tilde{f}(\gamma) s_{3,1}^{-1}\in \Gamma'.
\end{align}
Using the equation  \eqref{finverse on gammap=32},
\[f(x_{2,1}(s_{3,1}\tilde{f}(\gamma) s_{3,1}^{-1})x_{2,1}^{-1})  = f(f^{-1}(s_{3,1}\gamma s_{3,1}^{-1}) = s_{3,1}\gamma s_{3,1}^{-1}.\]
We calculate the map $\Phi_1$ and $\Phi_2$ acting on $\Gamma'$:
\begin{align*}
\Phi_1|_{\Gamma'}: s_{3,1}\gamma s_{3,1}^{-1}&\xmapsto{L_{x_{1,1}x_{2,1}}^{-1}}  s_{3,1}\gamma s_{3,1}^{-1} \\
&\xmapsto{f^{-1}} x_{2,1}(s_{3,1}\tilde{f}(\gamma) s_{3,1}^{-1})x_{2,1}^{-1} \\
&\xmapsto{L_{x_{1,1}x_{2,1}}} x_{2,1}(s_{3,1}\tilde{f}(\gamma) s_{3,1}^{-1})x_{2,1}^{-1}\\
&\xmapsto{f} s_{3,1}\gamma s_{3,1}^{-1};\\
\Phi_2|_{\Gamma'}: s_{3,1}\gamma s_{3,1}^{-1}&\xmapsto{f^{-1}} x_{2,1}(s_{3,1}\tilde{f}(\gamma) s_{3,1}^{-1})x_{2,1}^{-1} \\
&\xmapsto{R_{x_{1,1}x_{2,1}}^{-1}} x_{2,1}(s_{3,1}\tilde{f}(\gamma) s_{3,1}^{-1})x_{2,1}^{-1} \\
&\xmapsto{f} s_{3,1}\gamma s_{3,1}^{-1}\\
&\xmapsto{R_{x_{1,1}x_{2,1}}} s_{3,1}\gamma s_{3,1}^{-1}.     
\end{align*}
This implies that $(\Phi_1 \circ \Phi_2)|_{\Gamma'} = \text{Id}_{\Gamma'}$. Hence, $\Phi_1 \circ \Phi_2 = C_{x_{1,1}x_{2,1}}^{-1}$ on $\pi_1(X,v_0)=F_n$. The proof then follows as the argument in Case \ref{A connected2}.
\end{subcase}

When $n=4$, there are $3$ possible graphs with at least two vertices, namely those in Figure \ref{n=4case1}, \ref{n=4case2} and \ref{n=4case3}. We deal with these cases separately. By replacing $f\in \mathrm{Aut}(F_n)$ by a conjugate element, we can assume our basepoint $v_0$ lies as in the figures.
\begin{figure}
\begin{minipage}[t]{0.48\textwidth}
\centering
\includegraphics[scale=0.55]{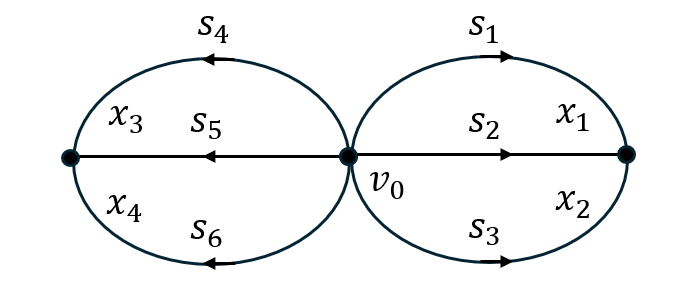}
\caption{case $1$, graph $X_1$ does not contain loops \label{n=4case1}}
\end{minipage}
\begin{minipage}[t]{0.48\textwidth}
\centering
\includegraphics[scale=0.55]{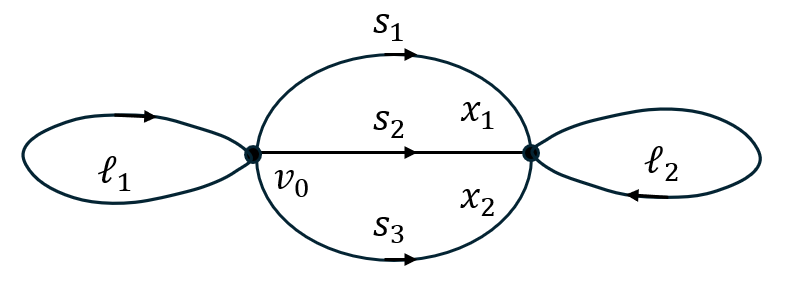}
\caption{case $2$, graph $X_2$ contains two loops attached at different vertices\label{n=4case2}}
\end{minipage}
\begin{minipage}[t]{0.48\textwidth}
\centering
\includegraphics[scale=0.55]{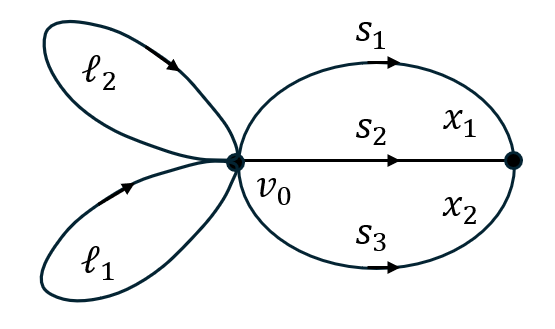}
\caption{case $3$, graph $X_3$ contains two loops attached at the same vertex\label{n=4case3}}
\end{minipage}
\end{figure}
\begin{subcase}\label{n=4case1proof}
In this case, $n=4$ and let $X_1$ be the graph in Figure \ref{n=4case1}. This graph has oriented edges $s_1,\ldots,s_6$. Let  $\tilde{f}_1\in\mathrm{Aut}(X_1, v_0)$ be the automorphism that on these edges satisfies
\[ \tilde{f}_1(s_i)=s_{i+1},\quad 1\leq i\leq 6.\]
Consider the following basis of $\pi_1(X_1,v_0)$:
\[x_1=s_1s_2^{-1}, \quad x_2=s_2s_3^{-1},\quad x_3=s_3s_4^{-1}, \quad x_4=s_4s_5^{-1}.\]
The map $\tilde{f}_{1*}$ on $\pi_1(X_1,v_0)$ realizes the map $f_1\in\mathrm{Aut}(F_4)$ where
\begin{align*}
f_1: &x_1 \mapsto x_2               &f_1^{-1}: &x_1 \mapsto  x_2^{-1}x_1^{-1} \\
     &x_2 \mapsto x_2^{-1}x_1^{-1}  &   &x_2 \mapsto x_1 \\
     &x_3 \mapsto x_4               &   &x_3 \mapsto x_3^{-1}x_4^{-1} \\
     &x_4 \mapsto x_3^{-1}x_4^{-1}               &   &x_4 \mapsto x_3. 
\end{align*}
We aim to apply Proposition \ref{level 3 prop} to compute $N(f_1)$. To meet the three requirements specified in the proposition, we need to find a conjugation $C_{x_ix_j}\in N(f_1)$ for distinct $1\leq i,j\leq n$, a matrix $e_{k,r}^3\in\overline{N(f_1)}$ for distinct $1\leq k,r\leq n$, and an element $\Phi\not\equiv I$ mod $ 3$ in $\overline{N(f_1)}$. 

Let $\Phi_1 = f_1L_{x_{1}x_{2}}f^{-1}_1L_{x_{1}x_{2}}^{-1}$ and $\Phi_2 = R_{x_{1}x_{2}}f_1R_{x_{1}x_{2}}^{-1}f_1^{-1}$. The maps $\Phi_1$ and $\Phi_2$ are in $N(f_1)$. We calculate $\Phi_1$ acting on the basis of $\pi_1(X_1,v_0)$ as follows.
\begin{align*}
    & x_1 \xmapsto{L_{x_1x_2}^{-1}} x_2^{-1} x_1 \xmapsto{f_1^{-1}} x_1^{-1}x_2^{-1}x_1^{-1} \xmapsto{L_{x_1x_2}} x_1^{-1}x_2^{-1}x_2^{-1} x_1^{-1}x_2^{-1}\xmapsto{f_1} x_2^{-1}x_1x_2x_1x_1x_2, \\
    & x_2 \xmapsto{L_{x_1x_2}^{-1}} x_2 \xmapsto{f_1^{-1}} x_1 \xmapsto{L_{x_1x_2}} x_2x_1 \xmapsto{f_1} x_2^{-1}x_1^{-1}x_2,
\end{align*}
and it fixes $x_3$ and $x_4$. Similarly, we calculate $\Phi_2(x_1)$ and $\Phi_2(x_2)$ as follows: 
\begin{align*}
    & x_1 \xmapsto{f_1^{-1}} x_2^{-1}x_1^{-1} \xmapsto{R_{x_1x_2}^{-1}} x_1^{-1} \xmapsto{f_1} x_2^{-1} \xmapsto{R_{x_1x_2}} x_2^{-1}, \\
    & x_2 \xmapsto{f_1^{-1}} x_1 \xmapsto{R_{x_1x_2}^{-1}} x_1x_2^{-1} \xmapsto{f_1} x_2x_1x_2 \xmapsto{R_{x_1x_2}} x_2x_1x_2x_2.
\end{align*}
The map $\Phi_2$ fixes the other basis elements in $\pi_1(X_1,v_0)$.  Consider 
\[
\Phi_1 \circ \Phi_2 = f_1L_{x_1x_2}f_1^{-1}L_{x_1x_2}^{-1}\circ R_{x_1x_2}f_1R_{x_1x_2}^{-1}f_1^{-1}\in N(f_1).
\]
Then $\Phi_1 \circ \Phi_2 = C_{x_1x_2}^{-1}$ since
\begin{align*}
     & x_1 \xmapsto{\Phi_2} x_2^{-1} \xmapsto{\Phi_1} x_2^{-1}x_1x_2,\\
     & x_2 \xmapsto{\Phi_2} x_2x_1x_2x_2 \xmapsto{\Phi_1} x_2,
\end{align*}
and $\Phi_1 \circ \Phi_2 $ fixes the other basis elements of $\pi_1(X_1,v_0)$. Therefore, $C_{x_1x_2}\in N(f_1)$. Now, for distinct $1\leq k,r\leq n$, we would like to find a matrix $e_{k,r}^3\in N(f_1)$. Consider the map \[\Psi=(I_{x_2}f_1I_{x_2}f_1^{-1})(R_{x_{1}x_{2}}f_1R_{x_{1}x_{2}}^{-1}f_1^{-1})\in N(f_1).\]
It acts on the basis elements of $\pi_1(X_1,v_0)$ as follows:
\begin{align*}
     x_1 &\xmapsto{f_1^{-1}} x_2^{-1} x_1^{-1} \xmapsto{R_{x_1x_2}^{-1}} x_1^{-1} \xmapsto{f_1}x_2^{-1}\xmapsto{R_{x_1x_2}} x_2^{-1}\\
    &\xmapsto{f_1^{-1}} x_1^{-1} \xmapsto{I_{x_2}} x_1^{-1} \xmapsto{f_1}x_2^{-1}\xmapsto{I_{x_2}} x_2, \\
     x_2&\xmapsto{f_1^{-1}} x_1 \xmapsto{R_{x_1x_2}^{-1}} x_1x_2^{-1} \xmapsto{f_1}x_2x_1x_2\xmapsto{R_{x_1x_2}} x_2x_1x_2x_2\\
    &\xmapsto{f_1^{-1}} x_1x_2^{-1}x_1 \xmapsto{I_{x_2}} x_1x_2x_1 \xmapsto{f_1}x_1^{-1}x_2\xmapsto{I_{x_2}} x_1^{-1}x_2^{-1}, 
\end{align*}
and it fixes $x_3$ and $x_4$. Thus, we have
\begin{align*}
\Psi: &x_1 \mapsto x_2               &\Psi^{-1}: &x_1 \mapsto  x_1^{-1}x_2^{-1} \\
     &x_2 \mapsto x_1^{-1}x_2^{-1}  &   &x_2 \mapsto x_1 \\
     &x_3 \mapsto x_3               &   &x_3 \mapsto x_3 \\
     &x_4 \mapsto x_4               &   &x_4 \mapsto x_4 
\end{align*}
Now we consider the map
\[(\Psi L_{x_{2}x_{3}}^{-1}\Psi^{-1}L_{x_{2}x_{3}})(\Psi L_{x_{1}x_{3}}^{-1}\Psi^{-1} L_{x_{1}x_{3}})\in N(f_1).\]
It acts on the basis elements as follows:
\begin{align*}
     x_1 &\xmapsto{L_{x_1x_3}} x_3 x_1 \xmapsto{\Psi^{-1}} x_3x_1^{-1}x_2^{-1} \xmapsto{L_{x_1x_3}^{-1}}x_3x_1^{-1}x_3x_2^{-1}\xmapsto{\Psi} x_3x_2^{-1}x_3x_2x_1\\
    &\xmapsto{L_{x_2x_3}} x_3x_2^{-1}x_3x_2x_1\xmapsto{\Psi^{-1}} x_3x_1^{-1}x_3x_2^{-1}  \xmapsto{L_{x_2x_3}^{-1}}x_3x_1^{-1}x_3x_2^{-1}x_3\xmapsto{\Psi} x_3x_2^{-1}x_3x_2x_1x_3, \\
     x_2 &\xmapsto{L_{x_1x_3}} x_2 \xmapsto{\Psi^{-1}} x_1 \xmapsto{L_{x_1x_3}^{-1}}x_3^{-1}x_1\xmapsto{\Psi} x_3^{-1}x_2\\
    &\xmapsto{L_{x_2x_3}} x_2\xmapsto{\Psi^{-1}} x_1  \xmapsto{L_{x_2x_3}^{-1}}x_1\xmapsto{\Psi} x_2, 
\end{align*}
and it fixes $x_3$ and $x_4$. Therefore, the image of this map  in $\mathrm{GL}_4(\mathbb{Z})$ is 
\[
\begin{pmatrix}
1 & 0 & 0 & 0  \\
0 & 1 & 0 & 0  \\
3 & 0 & 1 & 0 \\
0 & 0 & 0 & 1 
\end{pmatrix}= e_{3,1}^3.
\]
Moreover, the matrix $\bar{f_1} \not\equiv I$ mod $ 3$. Using Proposition \ref{level 3 prop}, we conclude that $N(f_1)=\mathrm{SAut}(F_4)$.
\end{subcase}
\begin{subcase} \label{n=4case2proof}
In this case, $n=4$ and let $X_2$ be the graph in Figure \ref{n=4case2}. This graph has oriented edges $\ell_1, \ell_2$ and $s_1,\ldots,s_3$. Let  $\tilde{f}_2\in\mathrm{Aut}(X_2, v_0)$ be the automorphism that on these edges satisfies
\begin{align*}
 \tilde{f}_2(\ell_m)=\ell_m,\quad m = 1,2\\
 \quad \tilde{f}_2(s_i)=s_{i+1}, \quad 1\leq i\leq 3.
\end{align*}
Consider the following basis of $\pi_1(X_2,v_0)$:
\[x_1=s_1s_2^{-1}, \quad x_2=s_2s_3^{-1},\quad x_3=\ell_1, \quad x_4=s_3\ell_2 s_3^{-1}.\]
The map $\tilde{f}_{2*}$ on $\pi_1(X_2,v_0)$ realizes the map $f_2\in\mathrm{Aut}(F_4)$ where
\begin{align*}
f_2: &x_1 \mapsto x_2               &f_2^{-1}: &x_1 \mapsto  x_2^{-1}x_1^{-1} \\
     &x_2 \mapsto x_2^{-1}x_1^{-1}  &   &x_2 \mapsto x_1 \\
     &x_3 \mapsto x_3               &   &x_3 \mapsto x_3 \\
     &x_4 \mapsto x_1x_2x_4x_2^{-1}x_1^{-1}               &   &x_4 \mapsto x_2x_4x_2^{-1} 
\end{align*}
We aim to apply Proposition \ref{level 3 prop} to compute $N(f_2)$. To meet the three requirements specified in the proposition, we need to find a conjugation $C_{x_ix_j}\in N(f_2)$ for distinct $1\leq i,j\leq n$, a matrix $e_{k,r}^3\in\overline{N(f_2)}$ for distinct $1\leq k,r\leq n$, and an element $\Phi\not\equiv I$ mod $ 3$ in $\overline{N(f_2)}$. 

Let $\Phi_1 = f_2L_{x_{1}x_{2}}f^{-1}_2L_{x_{1}x_{2}}^{-1}$ and $\Phi_2 = R_{x_{1}x_{2}}f_2R_{x_{1}x_{2}}^{-1}f_2^{-1}$. The maps $\Phi_1$ and $\Phi_2$ are in $N(f_2)$. We calculate $\Phi_1$ acting on the basis of $\pi_1(X_2,v_0)$ as follows.
\begin{align*}
    & x_1 \xmapsto{L_{x_1x_2}^{-1}} x_2^{-1} x_1 \xmapsto{f_2^{-1}} x_1^{-1}x_2^{-1}x_1^{-1} \xmapsto{L_{x_1x_2}} x_1^{-1}x_2^{-1}x_2^{-1} x_1^{-1}x_2^{-1}\xmapsto{f_2} x_2^{-1}x_1x_2x_1x_1x_2, \\
    & x_2 \xmapsto{L_{x_1x_2}^{-1}} x_2 \xmapsto{f_2^{-1}} x_1 \xmapsto{L_{x_1x_2}} x_2x_1 \xmapsto{f_2} x_2^{-1}x_1^{-1}x_2,
\end{align*}
and it fixes $x_3$ and $x_4$. Similarly, we calculate $\Phi_2(x_1)$ and $\Phi_2(x_2)$ as follows: 
\begin{align*}
    & x_1 \xmapsto{f_2^{-1}} x_2^{-1}x_1^{-1} \xmapsto{R_{x_1x_2}^{-1}} x_1^{-1} \xmapsto{f_2} x_2^{-1} \xmapsto{R_{x_1x_2}} x_2^{-1}, \\
    & x_2 \xmapsto{f_2^{-1}} x_1 \xmapsto{R_{x_1x_2}^{-1}} x_1x_2^{-1} \xmapsto{f_2} x_2x_1x_2 \xmapsto{R_{x_1x_2}} x_2x_1x_2x_2.
\end{align*}
The map $\Phi_2$ fixes the other basis elements in $\pi_1(X_2,v_0)$.  Consider 
\[
\Phi_1 \circ \Phi_2 = f_2L_{x_1x_2}f_2^{-1}L_{x_1x_2}^{-1}\circ R_{x_1x_2}f_2R_{x_1x_2}^{-1}f_2^{-1}\in N(f_2).
\]
Then $\Phi_1 \circ \Phi_2 = C_{x_1x_2}^{-1}$ since
\begin{align*}
     & x_1 \xmapsto{\Phi_2} x_2^{-1} \xmapsto{\Phi_1} x_2^{-1}x_1x_2,\\
     & x_2 \xmapsto{\Phi_2} x_2x_1x_2x_2 \xmapsto{\Phi_1} x_2,
\end{align*}
and $\Phi_1 \circ \Phi_2 $ fixes the other basis elements of $\pi_1(X_2,v_0)$. Therefore, $C_{x_1x_2}\in N(f_2)$. Moreover, the image of  $f_2$ in $\mathrm{GL}_4(\mathbb{Z})$ is the same as the image of the map $\Psi$ from Case \ref{n=4case1proof}. Thus, the same calculation as we did in Case \ref{n=4case1proof} shows that the image of
\[(f_2 L_{x_{2}x_{3}}^{-1}f_2^{-1}L_{x_{2}x_{3}})(f_2 L_{x_{1}x_{3}}^{-1}f_2^{-1} L_{x_{1}x_{3}})\in N(f_2)\]
 in $\mathrm{GL}_4(\mathbb{Z})$ is
\[
\begin{pmatrix}
1 & 0 & 0 & 0  \\
0 & 1 & 0 & 0  \\
3 & 0 & 1 & 0 \\
0 & 0 & 0 & 1 
\end{pmatrix}= e_{3,1}^3.
\]
Moreover, the matrix $\bar{f_2} \not\equiv I$ mod $ 3$. Using Proposition \ref{level 3 prop}, we conclude that $N(f_2)=\mathrm{SAut}(F_4)$.
\end{subcase}
\begin{subcase}
In this case, $n=4$ and let $X_3$ be the graph in Figure \ref{n=4case3}. This graph has oriented edges $\ell_1, \ell_2$ and $s_1,\ldots,s_3$. Let  $\tilde{f}_3\in\mathrm{Aut}(X_3, v_0)$ be the automorphism that on these edges satisfies
\begin{align*}
 \tilde{f}_3(\ell_m)=\ell_m,\quad m = 1,2\\
 \quad \tilde{f}_3(s_i)=s_{i+1}, \quad 1\leq i\leq 3.
\end{align*}
Consider the following basis of $\pi_1(X_3,v_0)$:
\[x_1=s_1s_2^{-1}, \quad x_2=s_2s_3^{-1},\quad x_3=\ell_1, \quad x_4=\ell_2 .\]
The map $\tilde{f}_{3*}$ on $\pi_1(X_3,v_0)$ realizes the map $f_3\in\mathrm{Aut}(F_4)$ where
\begin{align*}
f_3: &x_1 \mapsto x_2               &f_3^{-1}: &x_1 \mapsto  x_2^{-1}x_1^{-1} \\
     &x_2 \mapsto x_2^{-1}x_1^{-1}  &   &x_2 \mapsto x_1 \\
     &x_3 \mapsto x_3               &   &x_3 \mapsto x_3 \\
     &x_4 \mapsto x_4              &   &x_4 \mapsto x_4
\end{align*}
We aim to apply Proposition \ref{level 3 prop} to compute $N(f_3)$. To meet the three requirements specified in the proposition, we need to find a conjugation $C_{x_ix_j}\in N(f_3)$ for distinct $1\leq i,j\leq n$, a matrix $e_{k,r}^3\in\overline{N(f_3)}$ for distinct $1\leq k,r\leq n$, and an element $\Phi\not\equiv I$ mod $ 3$ in $\overline{N(f_3)}$. 

Let $\Phi_1 = f_3L_{x_{1}x_{2}}f^{-1}_3L_{x_{1}x_{2}}^{-1}$ and $\Phi_2 = R_{x_{1}x_{2}}f_3R_{x_{1}x_{2}}^{-1}f_3^{-1}$. The maps $\Phi_1$ and $\Phi_2$ are in $N(f_3)$. We calculate $\Phi_1$ acting on the basis of $\pi_1(X_3,v_0)$ as follows.
\begin{align*}
    & x_1 \xmapsto{L_{x_1x_2}^{-1}} x_2^{-1} x_1 \xmapsto{f_3^{-1}} x_1^{-1}x_2^{-1}x_1^{-1} \xmapsto{L_{x_1x_2}} x_1^{-1}x_2^{-1}x_2^{-1} x_1^{-1}x_2^{-1}\xmapsto{f_3} x_2^{-1}x_1x_2x_1x_1x_2, \\
    & x_2 \xmapsto{L_{x_1x_2}^{-1}} x_2 \xmapsto{f_3^{-1}} x_1 \xmapsto{L_{x_1x_2}} x_2x_1 \xmapsto{f_3} x_2^{-1}x_1^{-1}x_2,
\end{align*}
and it fixes $x_3$ and $x_4$. Similarly, we calculate $\Phi_2(x_1)$ and $\Phi_2(x_2)$ as follows: 
\begin{align*}
    & x_1 \xmapsto{f_3^{-1}} x_2^{-1}x_1^{-1} \xmapsto{R_{x_1x_2}^{-1}} x_1^{-1} \xmapsto{f_3} x_2^{-1} \xmapsto{R_{x_1x_2}} x_2^{-1}, \\
    & x_2 \xmapsto{f_3^{-1}} x_1 \xmapsto{R_{x_1x_2}^{-1}} x_1x_2^{-1} \xmapsto{f_3} x_2x_1x_2 \xmapsto{R_{x_1x_2}} x_2x_1x_2x_2.
\end{align*}
The map $\Phi_2$ fixes the other basis elements in $\pi_1(X_3,v_0)$.  Consider 
\[
\Phi_1 \circ \Phi_2 = f_3L_{x_1x_2}f_3^{-1}L_{x_1x_2}^{-1}\circ R_{x_1x_2}f_3R_{x_1x_2}^{-1}f_3^{-1}\in N(f_3).
\]
Then $\Phi_1 \circ \Phi_2 = C_{x_1x_2}^{-1}$ since
\begin{align*}
     & x_1 \xmapsto{\Phi_2} x_2^{-1} \xmapsto{\Phi_1} x_2^{-1}x_1x_2,\\
     & x_2 \xmapsto{\Phi_2} x_2x_1x_2x_2 \xmapsto{\Phi_1} x_2,
\end{align*}
and $\Phi_1 \circ \Phi_2 $ fixes the other basis elements of $\pi_1(X_3,v_0)$. Therefore, $C_{x_1x_2}\in N(f_3)$. Moreover, the image of  $f_3$ in $\mathrm{GL}_4(\mathbb{Z})$ is the same as the image of the map $\Psi$ from Case \ref{n=4case1proof}. Thus, the same calculation as we did in Case \ref{n=4case1proof} shows that the image of
\[(f_3 L_{x_{2}x_{3}}^{-1}f_3^{-1}L_{x_{2}x_{3}})(f_3 L_{x_{1}x_{3}}^{-1}f_3^{-1} L_{x_{1}x_{3}})\in N(f_3)\]
 in $\mathrm{GL}_4(\mathbb{Z})$ is
\[
\begin{pmatrix}
1 & 0 & 0 & 0  \\
0 & 1 & 0 & 0  \\
3 & 0 & 1 & 0 \\
0 & 0 & 0 & 1 
\end{pmatrix}= e_{3,1}^3.
\]
Moreover, the matrix $\bar{f_3} \not\equiv I$ mod $ 3$. Using Proposition \ref{level 3 prop}, we conclude that $N(f_3)=\mathrm{SAut}(F_4)$.
\end{subcase}
Thus, we have proved Proposition \ref{prop1}.
\end{case}
\end{proof}

\section{Proof of the $\mathrm{Out}(F_n)$ case}
In this section, we aim to prove Proposition \ref{prop2}. We begin by considering a prime order element $f\in\mathrm{Out}(F_n)$. According to Proposition \ref{prop1}, if $f$ can be lifted to a finite order element in $\mathrm{Aut}(F_n)$, then the proof of Proposition \ref{prop2} follows immediately. However, if such a lift does not exist, we must explore alternative properties of $f$. Previously, we used $\tilde{f}\in \mathrm{Aut}(X,x_0)$ to analyze  $f\in\mathrm{Aut}(F_n)$ where $X$ is a graph with $\pi_1(X,x_0)=F_n$. Here, we  will employ a similar approach by using elements in $\mathrm{Aut}(X)$  to study  $f\in\mathrm{Out}(F_n)$. Before applying this strategy, we need to establish the connection between $\mathrm{Aut}(X)$ and $\mathrm{Out}(\pi_1(X,x_0))$.

Let $G$ be a group. Given an element $g\in \mathrm{Aut}(G)$, we use $[g]$ to denote the corresponding element in $\mathrm{Out}(G)$.
\begin{prop}\label{relation}
Let $X$ be a path-connected space and $x_0\in X$. There exists a group homomorphism
\begin{align*}
    \phi: \mathrm{Homeo}(X)&\rightarrow\mathrm{Out}(\pi_1(X,x_0))\\
    f&\mapsto [\phi_{\delta,f}]
\end{align*}
where $\delta$ is any path from $x_0$ to $f(x_0)$, and $\phi_{\delta,f}(\gamma)=\delta f(\gamma)\delta^{-1}$ for any loop $\gamma\in \pi_1(X,x_0)$.
\end{prop}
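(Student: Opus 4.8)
The plan is to realize $\phi_{\delta,f}$ as a composite of two familiar isomorphisms, then check that its image in $\mathrm{Out}(\pi_1(X,x_0))$ depends only on $f$, and finally verify multiplicativity by a direct path computation.

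First I would isolate the two ingredients. A homeomorphism $f\colon X\to X$ induces an isomorphism $f_\ast\colon\pi_1(X,x_0)\to\pi_1(X,f(x_0))$, and any path $\delta$ from $x_0$ to $f(x_0)$ induces the change-of-basepoint isomorphism $\widehat\delta\colon\pi_1(X,f(x_0))\to\pi_1(X,x_0)$, $[\gamma]\mapsto[\delta\gamma\delta^{-1}]$. With the concatenation convention in force one has $\phi_{\delta,f}=\widehat\delta\circ f_\ast$, so $\phi_{\delta,f}$ is an automorphism of $\pi_1(X,x_0)$ and $[\phi_{\delta,f}]\in\mathrm{Out}(\pi_1(X,x_0))$ is meaningful. (That $\phi_{\delta,f}$ is a homomorphism can also be seen directly from $f(\gamma_1\gamma_2)=f(\gamma_1)f(\gamma_2)$ and the cancellation $\delta^{-1}\delta\simeq\mathrm{const}$.)

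Next I would prove independence of the auxiliary path. If $\delta'$ is another path from $x_0$ to $f(x_0)$, put $c=[\delta'\delta^{-1}]\in\pi_1(X,x_0)$; then $\delta'\simeq(\delta'\delta^{-1})\delta$ and $(\delta')^{-1}\simeq\delta^{-1}(\delta'\delta^{-1})^{-1}$, so for every loop $\gamma$ at $x_0$,
\[
\phi_{\delta',f}(\gamma)=\delta'f(\gamma)(\delta')^{-1}=c\bigl(\delta f(\gamma)\delta^{-1}\bigr)c^{-1},
\]
i.e.\ $\phi_{\delta',f}$ is $\phi_{\delta,f}$ post-composed with conjugation by $c$. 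Hence $\phi_{\delta',f}$ and $\phi_{\delta,f}$ have the same class in $\mathrm{Out}(\pi_1(X,x_0))$, and $\phi(f):=[\phi_{\delta,f}]$ is well defined. Taking $\delta$ constant for $f=\mathrm{id}_X$ gives $\phi(\mathrm{id}_X)=\mathrm{id}$.

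Finally I would check that $\phi$ is a homomorphism. Given $f,g\in\mathrm{Homeo}(X)$, choose paths $\delta_f$ from $x_0$ to $f(x_0)$ and $\delta_g$ from $x_0$ to $g(x_0)$; then $\delta_f\cdot f(\delta_g)$ is a path from $x_0$ to $(f\circ g)(x_0)$, admissible for computing $\phi(f\circ g)$. Using that $f$ commutes with concatenation and inversion of paths, for every loop $\gamma$ at $x_0$,
\[
\phi_{\delta_f\cdot f(\delta_g),\,f\circ g}(\gamma)=\delta_f\,f\!\bigl(\delta_g\,g(\gamma)\,\delta_g^{-1}\bigr)\delta_f^{-1}=\delta_f\,f\!\bigl(\phi_{\delta_g,g}(\gamma)\bigr)\delta_f^{-1}=\phi_{\delta_f,f}\bigl(\phi_{\delta_g,g}(\gamma)\bigr),
\]
so $\phi_{\delta_f\cdot f(\delta_g),\,f\circ g}=\phi_{\delta_f,f}\circ\phi_{\delta_g,g}$ and therefore $\phi(f\circ g)=\phi(f)\,\phi(g)$. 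The whole argument is a routine verification; the only point requiring care is keeping the concatenation and change-of-basepoint conventions consistent throughout (left-to-right versus right-to-left composition of paths), since an inconsistency there would turn $\phi$ into an anti-homomorphism rather than a homomorphism. I do not expect any essential obstacle.
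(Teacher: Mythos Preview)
Your proof is correct and follows essentially the same approach as the paper: you verify well-definedness by showing two choices of path differ by an inner automorphism, and you verify the homomorphism property by computing $\phi_{\delta_f,f}\circ\phi_{\delta_g,g}=\phi_{\delta_f\cdot f(\delta_g),\,f\circ g}$. Your extra remarks (decomposing $\phi_{\delta,f}$ as $\widehat\delta\circ f_\ast$ and noting the identity case) are helpful but not essential additions.
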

\begin{proof}
First, we would like to prove that $\phi$ is well-defined. Suppose $\delta_1$ and $\delta_2$ are two paths from $x_0$ to $f(x_0)$. Then for any loop $\gamma$ based at $x_0$:
\begin{align*}
    \phi_{\delta_2, f}(\gamma) &= \delta_2 f(\gamma) \delta_2^{-1} \\
    &= (\delta_2 \delta_1^{-1})\cdot (\delta_1 f(\gamma) \delta_1^{-1}) \cdot(\delta_2 \delta_1^{-1})^{-1} \\
    &= (\delta_2 \delta_1^{-1})\cdot \phi_{\delta_1, f}(\gamma)\cdot (\delta_2 \delta_1^{-1})^{-1}
\end{align*}
Thus, $[\phi_{\delta_1, f}] = [\phi_{\delta_2, f}]$, confirming $\phi$ is well-defined.

Next, to prove that $\phi$ is a group homomorphism, consider two maps $f, g \in \mathrm{Homeo}(X)$.  Let $\delta_f$ and $\delta_g$ be paths from $x_0$ to $f(x_0)$ and $g(x_0)$ respectively. Then
\begin{align*}
    \phi_{\delta_f,f}\circ \phi_{\delta_g,g}(\gamma)&=\phi_{\delta_f,f}(\delta_g g(\gamma)\delta_g^{-1})\\
    &=\delta_f f(\delta_g g(\gamma)\delta_g^{-1})\delta_f^{-1}\\
    &= (\delta_f f(\delta_g) )\cdot f (g(\gamma))\cdot(f(\delta_g^{-1})\delta_f^{-1})\\
    &= \phi_{fg,\delta_f f(\delta_g)}(\gamma).
\end{align*}
Therefore, $\phi$ is a group homomorphism. 
\end{proof}

Here are the definitions that lead to the key property of  prime order element $f\in\mathrm{Out}(F_n)$ which $f$ cannot be lifted to a finite order element of $\mathrm{Aut}(F_n)$.

\begin{defn}
Let $R_{r,m}$ be the following  graph:
\begin{itemize}
    \item Let $Z_{r}$ be the unique connected $2$-regular graph with $r$ edges. Thus, $Z_{r}$ is homeomorphic to a circle.
    \item Let $R_{r,m}$ be the graph obtained from $Z_{r}$ by attaching $m-1$ loops to each vertex.
\end{itemize}
\end{defn}
We denote the vertices of $Z_{r}$ by $v_0, v_1,\ldots, v_{r-1}$ and denote the edge connecting vertices $v_{i-1}$ and $v_i$ by $s_i$ for all $1\leq i\leq r$. We denote the loops attached at vertex $v_i$ of $Z_{r}$ as $\ell_{i,j}$ for all $1\leq j\leq m-1$. 
\begin{defn}
The rotation automorphism of $R_{r,m}$ is the automorphism that maps $s_i$ to $s_{i+1}$ and maps $\ell_{i-1,j}$ to $\ell_{i,j}$, for all $1\leq i\leq r$ and $1\leq j\leq m-1$.
\end{defn}

\begin{example}
    Figure \ref{cover1} is an example of graph $R_{r,m}$ where $r=5$ and $m=3$. 
\begin{figure}[htbp]
\centering
\includegraphics[scale=0.5]{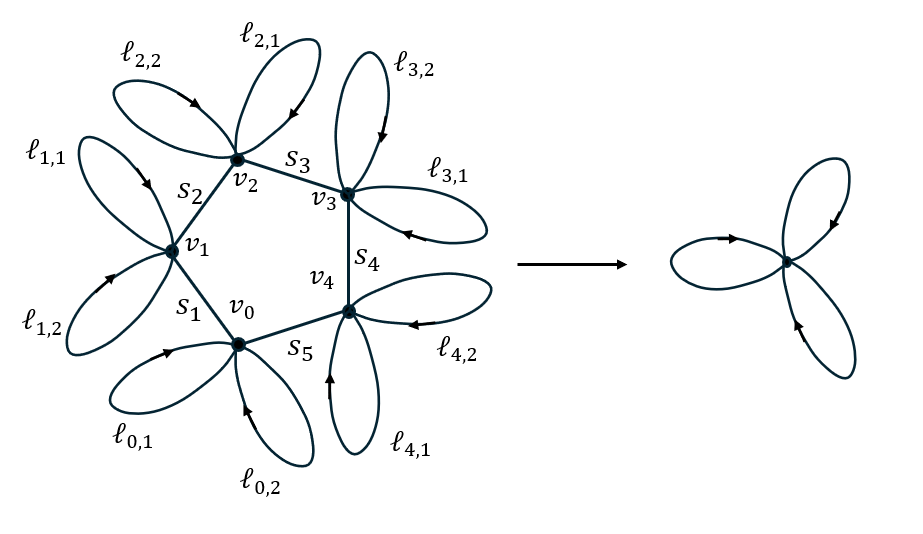}
\caption{$R_{5,3}$ can be viewed as a $\mathbb{Z}/ 5 \mathbb{Z}$ covering space of rose $R_3$\label{cover1}}
\end{figure}
\end{example}
In light of Proposition \ref{relation}, we give the following definition. 
\begin{defn}
We say that a finite order element $f\in\mathrm{Out}(F_n)$ is realized by an element $\tilde{f}\in\mathrm{Aut}(X)$ if the induced element $[\phi_{\delta,\tilde{f}}]\in \mathrm{Out}(\pi_1(X,x_0))$ has the property
\[\langle[\phi_{\delta,\tilde{f}}]\rangle\cong\langle f\rangle.\]
\end{defn}

\begin{prop}
\label{Out}
Let $f\in\mathrm{Out}(F_n)$ be of prime order $p$. Assume $f$ cannot be lifted to a finite order element of $\mathrm{Aut}(F_n)$. Then for some $m\geq 2$ with $p(m-1)+1=n$, we can realize $f$ by the rotation automorphism $\tilde{f} : R_{p,m}\rightarrow R_{p,m}$ such that $\tilde{f}(s_i)=s_{i+1}$ and $\tilde{f}(\ell_{i-1,j})=\ell_{i,j}$  for all $1\leq i\leq p$ and $1\leq j\leq m-1$.
\end{prop}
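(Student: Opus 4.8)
The plan is to combine Culler's realization theorem for finite subgroups of $\mathrm{Out}(F_n)$ with a minimality argument. Using that theorem, write $\langle f\rangle$ as the image of a subgroup $\langle\tilde f\rangle<\mathrm{Aut}(X)$ under the projection $\mathrm{Aut}(X)\to\mathrm{Out}(F_n)$ of Proposition \ref{relation}, where $X$ is a finite connected graph with $\pi_1(X,x_0)\cong F_n$; among all such pairs $(X,\tilde f)$ choose one with $|V(X)|$ minimal. Then $\tilde f$ has order $p$. The first claim is that $\tilde f$ fixes no vertex: if $\tilde f(v)=v$, we may take $v$ as basepoint and the path $\delta$ in Proposition \ref{relation} to be constant, so $[\phi_{\delta,\tilde f}]$ is the class of the honest automorphism $\tilde f_*\in\mathrm{Aut}(\pi_1(X,v))=\mathrm{Aut}(F_n)$, whose order divides that of $\tilde f$ and is hence finite; as $[\tilde f_*]=f\neq 1$ this $\tilde f_*$ is a nontrivial finite-order lift of $f$, contradicting the hypothesis. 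Since $p$ is prime, any power of $\tilde f$ fixing a vertex generates $\langle\tilde f\rangle$, so no nontrivial power of $\tilde f$ fixes a vertex; and a fixed oriented edge would have fixed endpoints, so $\langle\tilde f\rangle\cong\mathbb Z/p$ acts freely on $X$.

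Freeness makes $q\colon X\to Y:=X/\langle\tilde f\rangle$ a regular $p$-fold covering with deck group $\mathbb Z/p$, whence $\chi(X)=p\,\chi(Y)$. With $\chi(X)=1-n$ this forces $p\mid n-1$ and $\operatorname{rank}\pi_1(Y)=1+(n-1)/p=:m$; also $m\ge 2$, since $m=1$ would give $n=1$, and every element of $\mathrm{Out}(F_1)=\mathrm{Aut}(F_1)$ lifts with the same finite order. Next I force $Y$ to be a rose. Pick a spanning tree $T\subset Y$; its preimage $\tilde T=q^{-1}(T)$ is an $\tilde f$-invariant spanning forest of $X$, and since $q|_{\tilde T}\colon\tilde T\to T$ is a $p$-fold covering we get $\chi(\tilde T)=p\,\chi(T)=p$, so $\tilde T$ has exactly $p$ components. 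Collapsing $\tilde T$ is an $\tilde f$-equivariant homotopy equivalence: it collapses $p$ disjoint trees, does not glue distinct non-tree edges, and hence introduces no inversions, so it produces a realization of $\langle f\rangle$ on a graph with exactly $p$ vertices. Minimality gives $|V(X)|\le p$, while $|V(X)|=p\,|V(Y)|$, so $|V(Y)|=1$; thus $Y=R_m$ and $X$ is a connected regular $\mathbb Z/p$-cover of the rose $R_m$, classified by a surjection $\rho\colon F_m=\pi_1(R_m)\twoheadrightarrow\mathbb Z/p$, with $\tilde f$ acting as a generator of the deck group.

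It remains to normalize this cover. As $\rho$ is onto, some basis element has nonzero image; applying a suitable automorphism $\alpha\in\mathrm{Aut}(F_m)$ (a change of free basis) we may assume $\rho=\rho_0$ is the standard surjection with $\rho_0(a_1)=1$ and $\rho_0(a_i)=0$ for $i\ge 2$, after possibly composing with an automorphism of $\mathbb Z/p$, i.e.\ replacing $\tilde f$ by a power — harmless, since $\langle\tilde f\rangle$ is unchanged. The cover of $R_m$ attached to $\rho_0$ is precisely $R_{p,m}$: the $a_1$-loop lifts to the cycle $Z_p$ (edges $s_1,\dots,s_p$), each $a_i$-loop with $i\ge 2$ lifts to one loop at every vertex (giving $m-1$ loops per vertex), and a generator of the deck group is the rotation automorphism $s_i\mapsto s_{i+1}$, $\ell_{i-1,j}\mapsto\ell_{i,j}$. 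Finally, $\alpha$ restricts to an isomorphism $\ker\rho_0\xrightarrow{\ \sim\ }\ker\rho$ that conjugates the two deck actions: if $\rho_0(t)=1$ then $\alpha(t)$ is a $\rho$-preimage of $1$ and $\alpha(tgt^{-1})=\alpha(t)\alpha(g)\alpha(t)^{-1}$. Hence the outer automorphism of $\pi_1(R_{p,m})\cong F_n$ induced by the rotation is conjugate in $\mathrm{Out}(F_n)$ to $f$ (up to a power already absorbed above), so it generates a conjugate of $\langle f\rangle$; this is the asserted realization by the rotation automorphism of $R_{p,m}$, with $p(m-1)+1=n$ as found above.

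I expect the main work to be in the middle step: verifying that the preimage of a spanning tree of $Y$ is a spanning forest of $X$ with exactly $p$ components and that collapsing it is an $\tilde f$-equivariant homotopy equivalence without inversions, so that minimality really applies. The covering-space bookkeeping in the last step — identifying the normalized cover with $R_{p,m}$ and tracking the deck action down to a conjugacy class in $\mathrm{Out}(F_n)$ — is routine but must be done carefully, in particular to see that the normalization only alters $f$ within its cyclic subgroup up to conjugacy, which is exactly what the conclusion $\langle[\phi_{\delta,\tilde f}]\rangle\cong\langle f\rangle$ asks for.
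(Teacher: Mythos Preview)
Your proof is correct and follows essentially the same route as the paper's: obtain a free $\mathbb{Z}/p$-action realizing $\langle f\rangle$, collapse the preimage of a spanning tree of the quotient so that the quotient becomes a rose $R_m$, then normalize the classifying surjection $F_m\to\mathbb{Z}/p$ to identify the cover with $R_{p,m}$ and the deck action with the rotation (up to a power). The only substantive difference is that the paper invokes Culler directly for the statement that, under the non-liftability hypothesis, one may choose the realizing action to be free, whereas you derive this yourself from the observation that a fixed vertex would let you take $\delta$ constant and produce a finite-order lift; your minimality device is then used to pin down $|V(X)|=p$, but this step is not really needed---once you collapse the preimage of the spanning tree you could simply replace $X$ by the collapsed graph and work there, exactly as the paper does.
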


\begin{proof}
Under these hypotheses, Culler \cite{MC} proved that such $f$ can be realized by an automorphism $\tilde{f}_0$ of a graph $\tilde{X}_0$ where $\langle\tilde{f}_0\rangle$ acts freely on $\tilde{X}_0$. To construct a new realization that simplifies our analysis, let $T$ be a maximal tree in the quotient graph $\tilde{X}_0/\langle\tilde{f}_0\rangle$. Contracting each component in the preimage of $T$ in $\tilde{X}_0$ to a point, we obtain a new graph $\tilde{X}$, where the automorphism $\tilde{f}$, inherited from $\tilde{f}_0$, acts transitively on the vertices. Thus, $\tilde{X}/\langle\tilde{f}\rangle=R_m$ is a wedge of circles. 

Now we have a regular $\mathbb{Z}/p\mathbb{Z}$ cover $\tilde{X}\rightarrow  R_m$ which induces the map $\pi:\pi_1(R_m)\rightarrow\mathbb{Z}/p\mathbb{Z}$. Let \(x_1, \ldots, x_m\) be the generators of \(\pi_1(R_m)\). There must exist an element $x_k$ such that $\pi(x_k)\neq 0$, otherwise, $\pi$ is a zero map and $\tilde{X}\rightarrow  R_m$ cannot be a regular $\mathbb{Z}/p\mathbb{Z}$ cover. Then, $\pi(x_k)$ is a generator of  $\mathbb{Z}/p\mathbb{Z}$.  By applying an automorphism of $\mathbb{Z}/p\mathbb{Z}$ and relabeling the generators of \(\pi_1(R_m)\), we assume without loss of generality that \(\pi(x_1) = 1\).  Thus,
\[\pi: x_jx_1^{-\pi(x_j)}\mapsto \pi(x_j)-\pi(x_j)\cdot 1=0\]
where $2\leq j\leq m$. This allows us to change the basis of $\pi_1(R_m)$ such that $\pi$ maps the first generator to $1$ and all the others to $0$. Consequently, this setup achieves a regular $\mathbb{Z}/p\mathbb{Z}$ cover $\tilde{f} : R_{p,m}\rightarrow R_m$ that realizes $f$. Since $\tilde{f}$ has order $p$, and it acts freely and transitively on the vertices of $ R_{p,m}$, it follows that for some $2\leq j\leq p$, the realization $\tilde{f}(s_1)=s_{j}$ and  subsequently $\tilde{f}(s_{1+i})=s_{j+i}$ for all $1\leq i\leq p$. By replacing \(\tilde{f}\) with an appropriate power, we can ensure $\tilde{f}(s_1)=s_2$ and $\tilde{f}(s_i)=s_{i+1}$ for all $1\leq i\leq p$. After relabeling $\ell_{i,j}$, the realization $\tilde{f}(\ell_{i-1,j})=\ell_{i,j}$. We have now proved the proposition.
\end{proof}

\begin{proof}[Proof of Proposition \ref{prop2}]
Consider a prime order $p$ element $f\in\mathrm{Out}(F_n)$. If $f$ can be lifted to a finite order element of $\mathrm{Aut}(F_n)$, then by Proposition \ref {prop1},  we are done. If not, by Proposition \ref{Out}, we can realize $f$ by the rotation automorphism $\tilde{f} : R_{p,m}\rightarrow R_{p,m}$ where $\tilde{f}(s_i)=s_{i+1}$ and $\tilde{f}(\ell_{i-1,j})=\ell_{i,j}$. See Figure \ref{R5,3.PNG} for an illustration.
\begin{figure}[htbp]
\centering
\includegraphics[scale=0.5]{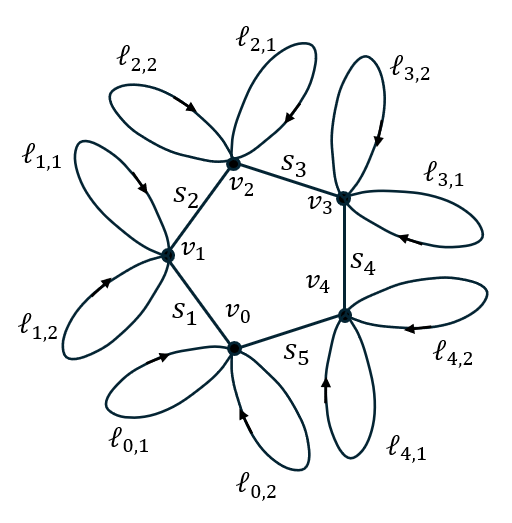}
\caption{$R_{5,3}$ can be viewed as a $\mathbb{Z}/ 5 \mathbb{Z}$ covering space of rose $R_3$\label{R5,3.PNG}}
\end{figure}
Notice that $s_1$ is the path from $v_0$ to $\tilde{f}(v_0)=v_1$. By Proposition \ref{relation}, the rotation automorphism $\tilde{f}$ induces $\phi_{s_1,\tilde{f}}$ in $\mathrm{Aut}(\pi_1(X,x_0))$. Consider the following basis of $\pi_1(R_{p,m}, v_0)$: 
\begin{align*}
x_{0,j}= \ell_{0,j}, \quad x_{i,j} = s_1\ldots s_i\ell_{i,j} (s_1\ldots s_i)^{-1}, \quad c = s_1 \ldots s_p
\end{align*}
where $1\leq i\leq p,\text{ and } 1\leq j\leq m-1$. The automorphism $\phi_{s_1,\tilde{f}}$ satisfies
\begin{align*}
\phi_{s_1,\tilde{f}}:      &x_{i,j} \mapsto x_{i+1,j}   &\phi_{s_1,\tilde{f}}^{-1}:     &x_{0,j} \mapsto c^{-1} x_{p-1,j} c \\
        &x_{p-1,j} \mapsto c x_{0,j} c^{-1}    &    &x_{i+1,j} \mapsto x_{i,j} \\  
        &c \mapsto c                 &            &c \mapsto c
\end{align*}
where $0\leq i\leq p-2$ and $1\leq j\leq m-1$.  Let $\Phi_1 = L_{x_{0,1}x_{1,1}}^{-1}\phi_{s_1,\tilde{f}}L_{x_{0,1}x_{1,1}}\phi_{s_1,\tilde{f}}^{-1}$ and $\Phi_2 = L_{x_{0,1}x_{1,1}}\phi_{s_1,\tilde{f}}L_{x_{0,1}x_{1,1}}^{-1}\phi_{s_1,\tilde{f}}^{-1} $.  The maps $\Phi_1$ and $\Phi_2$ are in $N(\phi_{s_1,\tilde{f}})$. We calculate $\Phi_1(x_{0,1})$ and $\Phi_1(x_{1,1})$ as follows:
\begin{align*}
     & x_{0,1} \xmapsto{\phi_{s_1,\tilde{f}}^{-1}} c^{-1}x_{p-1,1}c \xmapsto{L_{x_{0,1}x_{1,1}}} c^{-1}x_{p-1,1} c\xmapsto{ \phi_{s_1,\tilde{f}} } x_{0,1} \xmapsto{L_{x_{0,1}x_{1,1}}^{-1}} x_{1,1}^{-1}x_{0,1}, \\
     & x_{1,1} \xmapsto{\phi_{s_1,\tilde{f}}^{-1}} x_{0,1} \xmapsto{L_{x_{0,1}x_{1,1}}} x_{1,1}x_{0,1} \xmapsto{ \phi_{s_1,\tilde{f}} } x_{2,1}x_{1,1} \xmapsto{L_{x_{0,1}x_{1,1}}^{-1}} x_{2,1}x_{1,1}.
\end{align*}
The map $\Phi_1$ fixes the other basis elements in $\pi_1(\tilde{R}_{p,m}, v_0)$. Similarly, we calculate $\Phi_2(x_{0,1})$ and $\Phi_2(x_{1,1})$ as follows: 
\begin{align*}
     & x_{0,1} \xmapsto{\phi_{s_1,\tilde{f}}^{-1}} c^{-1}x_{p-1,1} c\xmapsto{L_{x_{0,1}x_{1,1}}^{-1}} c^{-1}x_{p-1,1} c\xmapsto{ \phi_{s_1,\tilde{f}} } x_{0,1} \xmapsto{L_{x_{0,1}x_{1,1}}} x_{1,1}x_{0,1}, \\
     & x_{1,1} \xmapsto{\phi_{s_1,\tilde{f}}^{-1}} x_{0,1} \xmapsto{L_{x_{0,1}x_{1,1}}^{-1}} x_{1,1}^{-1}x_{0,1} \xmapsto{ \phi_{s_1,\tilde{f}} } x_{2,1}^{-1}x_{1,1} \xmapsto{L_{x_{0,1}x_{1,1}}} x_{2,1}^{-1}x_{1,1}.
\end{align*}
The map $\Phi_2$ fixes the other basis elements in $\pi_1(X,v_0)$. Consider 
\[ \Phi_1 \circ \Phi_2 = L_{x_{0,1}x_{1,1}}^{-1}\phi_{s_1,\tilde{f}}L_{x_{0,1}x_{1,1}}\phi_{s_1,\tilde{f}}^{-1} \circ L_{x_{0,1}x_{1,1}}\phi_{s_1,\tilde{f}}L_{x_{0,1}x_{1,1}}^{-1}\phi_{s_1,\tilde{f}}^{-1}. \]
By direct computation, 
\begin{align*}
     & x_{0,1} \xmapsto{\Phi_2} x_{1,1}x_{0,1} \xmapsto{\Phi_1} x_{2,1}x_{0,1},\\
     & x_{1,1} \xmapsto{\Phi_2} x_{2,1}^{-1}x_{1,1} \xmapsto{\Phi_1} x_{1,1},
\end{align*}
and other basis elements are fixed. We can see that $\Phi_1 \circ \Phi_2 = L_{x_{0,1}x_{2,1}}\in N(\phi_{s_1,\tilde{f}})$. The image of $\phi_{s_1,\tilde{f}}$ in $\mathrm{GL}_n(\mathbb{Z})$ is the direct sum $M^k\oplus I_{n-pk}$, where $I$ is the identity matrix and $M$ is the $p$ by $p$ matrix
\[
\begin{pmatrix}
0 & 0 & 0 &\ldots & 0 & 1 \\
1 & 0 & 0 &\ldots & 0 & 0 \\
0 & 1 & 0 &\ldots & 0 & 0 \\
0 & 0 & 1 &\ldots & 0 & 0 \\
0 & 0 & 0 &\ldots & 0 & 0 \\
\vdots&\vdots&\vdots&&\vdots&\vdots\\
0 & 0 & 0 &\ldots & 1 & 0 \\
\end{pmatrix}.
\]
Then $\det(\bar{\phi}_{s_1,\tilde{f}})=(-1)^{p-1}=1$. Projecting $\Phi_1$, $\Phi_2$ and $\phi_{s_1,\tilde{f}}$ to $\mathrm{Out}(\pi_1(X,x_0))$, we conclude that $N([\phi_{s_1,\tilde{f}}])=\mathrm{SOut}(F_n)$ by Lemma \ref{left multi Out}. Since $\langle f \rangle\cong \langle[\phi_{s_1,\tilde{f}}]\rangle$, it follows that $N(f)=\mathrm{SOut}(F_n)$.
\end{proof}

\section{$p = 2$ case}
Let $f$ be an order $2$ element in $\mathrm{Aut}(F_n)$ and $\tilde{f}\in\mathrm{Aut}(X,v_0)$ be a good realization of $f$. The realization $\tilde{f}$ acts on the edges of $(X,v_0)$ as follows:
\begin{enumerate}
\item For each vertex $v$, the loops based at $v$ are permuted or \textit{inverted}. Some of them might be fixed.
\item For each pair of distinct vertices $v$ and $w$, the edges joining $v$ and $w$ are permuted. None of them are fixed.
\end{enumerate}
The \textit{inversion} in actions creates more scenarios to consider than when dealing with elements of higher orders. To handle these scenarios, we need to develop a key lemma first.

\begin{lemma}\label{level 2}
Let $f\in\mathrm{Aut}(F_n)$. Assume the following conditions are satisfied:
\begin{enumerate}
\item the Torelli subgroup $IA_n\subset N(f)$, and
\item the level $2$ subgroup $\Gamma_n(2)\subset\overline{N(f)}$.
\end{enumerate}
If $\bar{f}\in\Gamma_n(2)$, then when $\det(\bar{f}) = 1$, we have the short exact sequence
\[
1 \rightarrow IA_n \rightarrow N(f) \rightarrow \Gamma_n(2) \rightarrow 1
\]
and when $\det(\bar{f}) = -1$, we have the short exact sequence
\[
1 \rightarrow IA_n \rightarrow N(f)\rightarrow \mathrm{G}\Gamma_n(2) \rightarrow 1.
\]
If $\bar{f}\notin\Gamma_n(2)$, then 
\[
    N(f) = 
    \begin{cases}
        \mathrm{SAut}(F_n) & \text{if det}(\bar{f}) = 1, \\
        \mathrm{Aut}(F_n) & \text{if det}(\bar{f}) = -1.
    \end{cases}
\]
\end{lemma}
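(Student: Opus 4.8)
The plan is to pin down $\overline{N(f)}$ as a normal subgroup of $\mathrm{GL}_n(\mathbb{Z})$ and then transport the conclusion back up to $\mathrm{Aut}(F_n)$ using hypothesis $(1)$. Write $\rho\colon\mathrm{Aut}(F_n)\twoheadrightarrow\mathrm{GL}_n(\mathbb{Z})$ for the standard representation, so that $\ker\rho=IA_n$. Since $\rho$ is surjective and $N(f)$ is the normal closure of $f$, the image $\overline{N(f)}=\rho(N(f))$ is precisely the normal closure of $\bar f$ in $\mathrm{GL}_n(\mathbb{Z})$; in particular $\overline{N(f)}$ is the smallest normal subgroup of $\mathrm{GL}_n(\mathbb{Z})$ containing $\bar f$, hence is contained in \emph{every} normal subgroup of $\mathrm{GL}_n(\mathbb{Z})$ containing $\bar f$. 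By hypothesis $(2)$ it also contains $\Gamma_n(2)$. So the whole problem reduces to classifying the normal subgroups of $\mathrm{GL}_n(\mathbb{Z})$ that contain $\Gamma_n(2)$ and then matching $\bar f$ to the correct one.

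For the classification I would pass to $Q:=\mathrm{GL}_n(\mathbb{Z})/\Gamma_n(2)$. Reduction modulo $2$ gives a surjection $\mathrm{GL}_n(\mathbb{Z})\twoheadrightarrow\mathrm{GL}_n(\mathbb{Z}/2\mathbb{Z})=\mathrm{SL}_n(\mathbb{Z}/2\mathbb{Z})$ with kernel $\mathrm{G}\Gamma_n(2)=\{M: M\equiv I\bmod 2\}$, and $\Gamma_n(2)=\mathrm{G}\Gamma_n(2)\cap\mathrm{SL}_n(\mathbb{Z})$ has index $2$ in $\mathrm{G}\Gamma_n(2)$, the matrix $d:=\mathrm{diag}(-1,1,\dots,1)$ representing the nontrivial coset. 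The key point is that the class of $d$ is central in $Q$: any conjugate $gdg^{-1}$ is still $\equiv I\bmod 2$ and still has determinant $-1$, hence lies in the coset $d\Gamma_n(2)$, so $gd\Gamma_n(2)=dg\Gamma_n(2)$ for all $g$. Since $d^2=I$ and $\mathrm{SL}_n(\mathbb{Z})/\Gamma_n(2)=\mathrm{SL}_n(\mathbb{Z}/2\mathbb{Z})$ is a complement to $\langle d\Gamma_n(2)\rangle$ inside $Q$, this yields a direct product decomposition
\[
Q\;\cong\;\mathrm{SL}_n(\mathbb{Z}/2\mathbb{Z})\times\mathbb{Z}/2\mathbb{Z}.
\]
As $\mathrm{SL}_n(\mathbb{Z}/2\mathbb{Z})$ is simple and nonabelian for $n\geq 3$ (the same fact used in the proof of Lemma~\ref{level 4}), a normal subgroup of this product must project onto a normal subgroup of each factor, and an elementary check shows the only possibilities are $\{1\}$, the $\mathbb{Z}/2\mathbb{Z}$ factor, the $\mathrm{SL}_n(\mathbb{Z}/2\mathbb{Z})$ factor, and $Q$ itself. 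Pulling these back through $\mathrm{GL}_n(\mathbb{Z})\to Q$, the normal subgroups of $\mathrm{GL}_n(\mathbb{Z})$ containing $\Gamma_n(2)$ are exactly $\Gamma_n(2)$, $\mathrm{G}\Gamma_n(2)$, $\mathrm{SL}_n(\mathbb{Z})$, and $\mathrm{GL}_n(\mathbb{Z})$; hence $\overline{N(f)}$ is one of these four.

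To finish, I would split on the residue of $\bar f$ modulo $2$ and on $\det\bar f$, using that $\overline{N(f)}$ contains $\bar f$ while also lying in every normal subgroup that contains $\bar f$. If $\bar f\equiv I\bmod 2$, then $\overline{N(f)}\subseteq\mathrm{G}\Gamma_n(2)$, so it is $\Gamma_n(2)$ or $\mathrm{G}\Gamma_n(2)$; when $\det\bar f=1$ we have $\bar f\in\Gamma_n(2)$, hence $\overline{N(f)}\subseteq\Gamma_n(2)$, and together with hypothesis $(2)$ this forces $\overline{N(f)}=\Gamma_n(2)$, while when $\det\bar f=-1$ we have $\bar f\notin\Gamma_n(2)$, forcing $\overline{N(f)}=\mathrm{G}\Gamma_n(2)$. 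If $\bar f\not\equiv I\bmod 2$, then $\overline{N(f)}$ is neither $\Gamma_n(2)$ nor $\mathrm{G}\Gamma_n(2)$, so it equals $\mathrm{SL}_n(\mathbb{Z})$ when $\det\bar f=1$ (as then $\overline{N(f)}\subseteq\mathrm{SL}_n(\mathbb{Z})$) and $\mathrm{GL}_n(\mathbb{Z})$ when $\det\bar f=-1$. Finally, by hypothesis $(1)$ we have $IA_n=\ker\rho\subseteq N(f)$, so $N(f)=\rho^{-1}(\overline{N(f)})$; applying $\rho^{-1}$ to the four outcomes gives respectively the short exact sequence with quotient $\Gamma_n(2)$, the short exact sequence with quotient $\mathrm{G}\Gamma_n(2)$, the equality $N(f)=\mathrm{SAut}(F_n)$, and the equality $N(f)=\mathrm{Aut}(F_n)$, which is the assertion of the lemma (with the hypothesis ``$\bar f\in\Gamma_n(2)$'' understood as ``$\bar f\equiv I\bmod 2$'', so that the two exact sequences correspond to the two possible values of $\det\bar f$).

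I expect the only genuinely nonroutine step to be the classification in the second paragraph, and inside it the verification that the level-$2$ matrix $d$ of determinant $-1$ becomes central upon quotienting by $\Gamma_n(2)$; this is what produces the clean splitting $Q\cong\mathrm{SL}_n(\mathbb{Z}/2\mathbb{Z})\times\mathbb{Z}/2\mathbb{Z}$ and hence the short list of four normal subgroups. The remaining ingredients---identifying $\overline{N(f)}$ with the normal closure of $\bar f$, running the four cases, and pulling back through $\rho^{-1}$---are all bookkeeping.
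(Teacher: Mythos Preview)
Your argument is correct and hinges on the same key fact as the paper's proof, namely the simplicity of $\mathrm{SL}_n(\mathbb{Z}/2\mathbb{Z})$ for $n\ge 3$; the paper simply declares the case $\bar f\in\Gamma_n(2)$ ``straightforward'' and, in the remaining case, passes directly to the image in $\mathrm{SL}_n(\mathbb{Z}/2\mathbb{Z})$ to conclude $\overline{N(f)}\supseteq\mathrm{SL}_n(\mathbb{Z})$. Your route through the product decomposition $\mathrm{GL}_n(\mathbb{Z})/\Gamma_n(2)\cong\mathrm{SL}_n(\mathbb{Z}/2\mathbb{Z})\times\mathbb{Z}/2\mathbb{Z}$ is a cleaner packaging of the same idea, and it has the added virtue of making explicit both why the two short exact sequences are genuine equalities (not just containments) and why the hypothesis ``$\bar f\in\Gamma_n(2)$'' should be read as ``$\bar f\equiv I\bmod 2$'' so that the $\det\bar f=-1$ subcase is nonvacuous.
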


\begin{proof}
If $\bar{f}\in\Gamma_n(2)$, the conclusion is straightforward. If $\bar{f}\notin\Gamma_n(2)$,  consider 
\[G = \text{Im}(\overline{N(f)} \cap \mathrm{SL}_n(\mathbb{Z}) \rightarrow \mathrm{SL}_n(\mathbb{Z}/2\mathbb{Z}))\]
where the map is the natural projection map.  Then we have the short exact sequence:
\[  1 \rightarrow \Gamma_n(2) \rightarrow \overline{N(f)} \cap \mathrm{SL}_n(\mathbb{Z}) \rightarrow G \rightarrow 1\]
Since $\bar{f}\notin\Gamma_n(2)$, we have that $G$ is a nontrivial normal subgroup in $\mathrm{SL}_n(\mathbb{Z}/2\mathbb{Z})$. Since $\mathrm{SL}_n(\mathbb{Z}/2\mathbb{Z})\cong\mathrm{PSL}_n(\mathbb{Z}/2\mathbb{Z})$ which is simple for $n \geq 3$, see \cite{CJ}, we conclude that $G = \mathrm{SL}_n(\mathbb{Z}/2\mathbb{Z})$. This assertion implies that $\overline{N(f)}$ contains $\mathrm{SL}_n(\mathbb{Z})$ and the lemma will follow as in the proof of Lemma \ref{left multi}. 
\end{proof}

We will now proceed to examine the possible cases individually. For the following discussion, the map $\tilde{f}\in\mathrm{Aut}(X,v_0)$ is a good realization of $f$.

\setcounter{case}{0}
\begin{case}
If $X$ has only one vertex $v_0$, then $X$ is a rose $R_n$. Each loop denoted by $x_i(1\leq i\leq n)$ is a basis element of $\pi_1(X, v_0)$. We have two situations on how $\tilde{f}$ acts on $X$.
\begin{subcase}
In this case, the graph $X=R_n$ and the realization $\tilde{f}$ permutes all the loops in $X$. By relabeling each loop,  the action of $f$ on $\mathrm{Aut}(F_n)\cong\pi_1(X,v_0)$ satisfies: 
\begin{align*}
f:  &x_{1} \mapsto x_{2} \\
    &x_2\mapsto x_1 \\
     &\text{ }\vdots\quad\quad\text{ }\vdots\\
      &x_{n-1}\mapsto x_n\\
     &x_n\mapsto x_{n-1}
\end{align*}
Let $\Psi=I_{x_1}fI_{x_1}f\in N(f)$. By direct computation, $\Psi$ acts on the basis elements of $\pi_1(X,v_0)$ as follows:
\begin{align*}
     x_1 &\xmapsto{f} x_2 \xmapsto{I_{x_1}} x_2 \xmapsto{f}x_1\xmapsto{I_{x_1}}x_1^{-1},\\
     x_2 &\xmapsto{f} x_1 \xmapsto{I_{x_1}} x_1^{-1} \xmapsto{f}x_2^{-1}\xmapsto{I_{x_1}}x_2^{-1},
\end{align*}
and fixes the other basis elements. Therefore, consider $L_{x_1x_2}\Psi L_{x_1x_2}^{-1}\Psi $. It acts on the basis elements as follows:
\[
x_{1} \xmapsto{\Psi } x_{1}^{-1} \xmapsto{L_{x_{1}x_{2}}^{-1}} x_{1}^{-1}x_2 \xmapsto{ \Psi  } x_1x_2^{-1} \xmapsto{L_{x_1x_2}} x_{2}x_{1}x_2^{-1},
\]
and it fixes the other generators. Therefore, $L_{x_1x_2}\Psi L_{x_1x_2}^{-1}\Psi =C_{x_1x_2}\in N(f)$ implies $IA_n\subset N(f )$. Consider  $L_{x_1x_3}\Psi L_{x_1x_3}^{-1}\Psi $. It acts on  the basis elements as
\begin{align*}
    &x_{1} \xmapsto{\Psi } x_1^{-1} \xmapsto{L_{x_{1}x_{3}}^{-1}} x_1^{-1}x_3 \xmapsto{ \Psi  } x_1x_3 \xmapsto{L_{x_1x_3}} x_3x_1x_3,
\end{align*}
and fixes the other generators. We conclude that the image of $L_{x_1x_3}\Psi L_{x_1x_3}^{-1}\Psi $ in $\mathrm{GL}_n(\mathbb{Z})$ is $e_{3,1}^2$ which normally generates $\Gamma_n(2)$. Therefore, $\Gamma_n(2)\subset \overline{N( \Psi )}$. Since $N( \Psi )\subset N(f)$ and $f\notin \Gamma_n(2)$, by Lemma \ref{level 2}, we conclude that 
\[
    N(f) = 
    \begin{cases}
        \mathrm{SAut}(F_n) & \text{if det}(\bar{f}) = 1, \\
        \mathrm{Aut}(F_n) & \text{if det}(\bar{f}) = -1.
    \end{cases}
\]

\end{subcase}
\begin{subcase}
In this case, the graph $X=R_n$ and the realization $\tilde{f}$ permutes some of the loops in $X$  but not all of them. Notice that $\tilde{f}$ may or may not invert other loops. Recall that $n\geq 3$. By relabeling each loop,  the action of $f$ on $\mathrm{Aut}(F_n)\cong\pi_1(X,v_0)$ satisfies: 
\begin{align*}
f:  &x_{1} \mapsto x_{2} \\
    &x_2\mapsto x_1 \\
    &x_3\mapsto x_3^\delta,\quad \delta=\{-1,1\},\\
     &x_i\mapsto f(x_i),\quad 4\leq i\leq n.
\end{align*}
Here $f(x_i)$ is either $x_i^{\delta}$ or $x_j$ where $x_j$ is distinct from $x_1, x_2$ and $x_3$. Recall the definition of $P_{x_ix_j}$ in the Definition \ref{defn}. Let $\Psi=P_{x_2x_3}fP_{x_2x_3}f\in N(f)$. By direct computation, $\Psi$ acts on the basis element of $\pi_1(X,v_0)$ as follows:
\begin{align*}
     x_1 &\xmapsto{f} x_2 \xmapsto{P_{x_2x_3}} x_3 \xmapsto{f}x_3^{\delta}\xmapsto{P_{x_2x_3}}x_2^{\delta},\\
     x_2 &\xmapsto{f} x_1 \xmapsto{P_{x_2x_3}} x_1 \xmapsto{f}x_2\xmapsto{P_{x_2x_3}}x_3,\\
     x_3 &\xmapsto{f} x_3^{\delta} \xmapsto{P_{x_2x_3}} x_2^{\delta} \xmapsto{f}x_1^{\delta}\xmapsto{P_{x_2x_3}}x_1^{\delta},\\
    x_i &\xmapsto{f} f(x_i) \xmapsto{P_{x_2x_3}} f(x_i) \xmapsto{f}x_i\xmapsto{P_{x_2x_3}}x_i,
\end{align*}
where $4\leq i\leq n$. Therefore, we have
\begin{align*}
     x_1 &\xmapsto{\Psi} x_2^{\delta} \xmapsto{\Psi} x_3^{\delta} \xmapsto{\Psi}x_1,\\
     x_2 &\xmapsto{\Psi} x_3 \xmapsto{\Psi} x_1^{\delta} \xmapsto{\Psi}x_2,\\
     x_3 &\xmapsto{\Psi} x_1^{\delta} \xmapsto{\Psi} x_2 \xmapsto{\Psi}x_3,\\
    x_i &\xmapsto{\Psi} x_i \xmapsto{\Psi} x_i \xmapsto{\Psi}x_i, 
\end{align*}
where $4\leq i\leq n$. We can see that $\Psi$ is an order $3$ element of in $N(f)$. Using Proposition \ref{prop1}, we are done.
\end{subcase}
\begin{subcase}
In this case, the graph $X=R_n$ and the realization $\tilde{f}$ does not permute any loops but inverts at least one loop in $X$.  After reordering the basis,  the action of $f$ on $\mathrm{Aut}(F_n)$ satisfies: 
\begin{align*}
f:  &x_{1} \mapsto x_{1}^{-1} \\
    &x_i\mapsto x_i^{\delta_i},\quad \delta_i=\{-1,1\}
\end{align*}
For $2\leq i,j \leq n$, there are three subcases:
\begin{itemize}
    \item all $\delta_i=-1$,
    \item at least one $\delta_i=-1$ and one $\delta_j=1$,
    \item all $\delta_i=1$.
\end{itemize}
We will discuss them individually.
\begin{casea}
In this case, all $\delta_i=-1$ for $2\leq i\leq n$. Define $\Phi=L_{x_1x_2}fL_{x_1x_2}^{-1}f\in N(f)$. It acts on the basis elements as follows:
\[
x_{1} \xmapsto{f} x_{1}^{-1} \xmapsto{L_{x_{1}x_{2}}^{-1}} x_{1}^{-1}x_2 \xmapsto{ f } x_1x_2^{-1} \xmapsto{L_{x_1x_2}} x_{2}x_{1}x_2^{-1},
\]
and it fixes the other generators. Therefore, $\Phi=C_{x_1x_2}$ which implies \(IA_n \subset N(f)\). Given $\bar{f}_{-1}=-I$, we establish the short exact sequence:
\[
1 \rightarrow IA_n \rightarrow N(f) \rightarrow \{\pm I\} \rightarrow 1.
\]
\end{casea}
\begin{casea}
\label{p=2_subcase}In this case, at least one $\delta_i=-1$ and one $\delta_j=1$. After relabeling the basis elements, we assume $\delta_2=-1$ and  $\delta_3=1$. Therefore, we have the following automorphism
\begin{align*}
f:  &x_{1} \mapsto x_{1}^{-1} \\
    &x_2\mapsto x_2^{-1}\\
    &x_3\mapsto x_3\\
    &x_i\mapsto x_i^{\delta_i},\quad \delta_i=\{-1,1\}.
\end{align*}
Consider $L_{x_1x_2}fL_{x_1x_2}^{-1}f\in N(f)$. It acts on the basis elements as follows:
\[
x_{1} \xmapsto{f} x_{1}^{-1} \xmapsto{L_{x_{1}x_{2}}^{-1}} x_{1}^{-1}x_2 \xmapsto{ f } x_1x_2^{-1} \xmapsto{L_{x_1x_2}} x_{2}x_{1}x_2^{-1},
\]
and it fixes the other generators. Therefore, $L_{x_1x_2}fL_{x_1x_2}^{-1}f=C_{x_1x_2}$ implies $IA_n\subset N(f)$. Let  $\Psi=L_{x_1x_3}fL_{x_1x_3}^{-1}f$. The action of $\Psi$ on  the basis elements is
\begin{align*}
    &x_{1} \xmapsto{f} x_1^{-1} \xmapsto{L_{x_{1}x_{3}}^{-1}} x_1^{-1}x_3 \xmapsto{ f } x_1x_3 \xmapsto{L_{x_1x_3}} x_3x_1x_3,
\end{align*}
and it fixes the other generators. We conclude that the image of $\Psi$ in $\mathrm{GL}_n(\mathbb{Z})$ is $e_{3,1}^2$ which normally generates $\Gamma_n(2)$. Therefore,$\Gamma_n(2)\subset \overline{N( f)}$. Since $\bar{f}\in\Gamma_n(2)$, by Lemma \ref{level 2}, when $\det(\bar{f}) = 1$, we have the short exact sequence
\[
1 \rightarrow IA_n \rightarrow N(f) \rightarrow \Gamma_n(2) \rightarrow 1
\]
and when $\det(\bar{f}) = -1$, we have
\[
1 \rightarrow IA_n \rightarrow N(f)\rightarrow \mathrm{G}\Gamma_n(2) \rightarrow 1.
\]
\end{casea}
\begin{casea}
In this case, all $\delta_i=1$ for $2\leq i\leq n$. Consider $\Psi=P_{x_1x_2}f P_{x_1x_2}f\in N(f)$. It acts on the basis elements as follows:
\begin{align*}
    &x_{1} \xmapsto{f} x_1^{-1} \xmapsto{P_{x_{1}x_{2}}} x_2^{-1} \xmapsto{ f } x_2^{-1} \xmapsto{P_{x_1x_2}} x_1^{-1},\\
     &x_2 \xmapsto{f} x_2 \xmapsto{P_{x_{1}x_{2}}} x_1 \xmapsto{ f } x_1^{-1} \xmapsto{P_{x_1x_2}} x_2^{-1},\\
      &x_3 \xmapsto{f} x_3 \xmapsto{P_{x_{1}x_{2}}} x_3 \xmapsto{ f } x_3 \xmapsto{P_{x_1x_2}} x_3,
\end{align*}
and leaves other generators fixed. Notice that $\Psi$ is the map which $\delta_2=-1$, and all $\delta_i=1$ for $3\leq i\leq n$. It is a specific case of $f$ we defined in Case \ref{p=2_subcase}. Given $N(\Psi)\subset N(f)$, we have $IA_n\subset N(f
)$ and $\Gamma_n(2)\subset \overline{N(f)}$. Since  $\bar{f}\in\Gamma_n(2)$ and $\det(\bar{f}) = -1$, we conclude with the short exact sequence
\[
1 \rightarrow IA_n \rightarrow N(f)\rightarrow \mathrm{G}\Gamma_n(2) \rightarrow 1.
\]
\end{casea}
\end{subcase}
\end{case}

Now, we finished the discussion on $X=R_n$. If $X$ has an edge that is not a loop, then we have three situations.
\begin{case}
$X$ contains a subgraph $H_4$ where $H_4$ is a hairy graph $H_{2k}$ for $k=2$. By replacing $f\in \mathrm{Aut}(F_n)$ by a conjugate element, we can assume our basepoint $v_0$ lies in the subgraph $H_{4}$ of $X$. We denote the other vertex of $H_{4}$ to be $v_1$ and label the edges of $H_{4}$ as $s_{i},(1 \leq i \leq 4)$ such that $\tilde{f}(s_i) = s_{i+1}$.  See Figure \ref{H_4} for an illustration. 
Consider the following basis of $\pi_1(H_4, v_0)$:
\[x_1 = s_1s_2^{-1},\quad x_2= s_2s_3^{-1}s_4s_2^{-1}, \quad x_3 = s_2s_4^{-1}.\]
The action of $f|_{\pi_1(H_4,v_0)}$ satisfies:
\[f(x_1) = x_1^{-1},\quad f(x_2) = x_1x_2^{-1}x_1^{-1}, \quad f(x_3) = x_1x_2x_3.\]
\begin{figure}[htbp]
\centering
\includegraphics[scale=0.4]{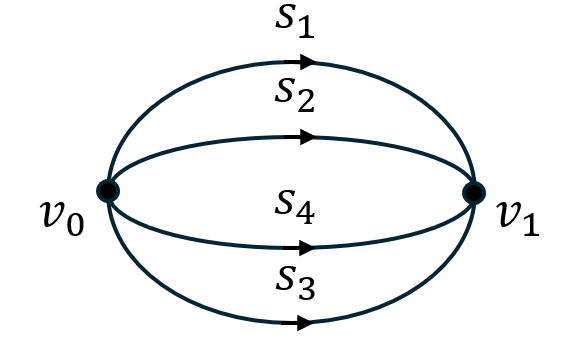}
\caption{subgraph $H_4$ \label{H_4}}
\end{figure}
Let $A = X \setminus (\text{edges in } H_4)$. We first deal with the case where $A$ is connected.
\begin{subcase}\label{A connected p=2}
In this case, $A = X \setminus (\text{edges in } H_4)$ which is connected. Let $\gamma_0$ be a path in $A$ connecting the vertices $v_1$ and $v_0$. See Figure \ref{H_4 A connected} for an illustration. We claim that
\[ \pi_1(X, v_0) = \pi_1(H_{4}, v_0) * \pi_1(A,v_0) * \Gamma,\quad\text{where }\Gamma\cong <s_3\gamma_0>.\]
The argument is identical to the proof in Case \ref{A connected} of the case  $p\geq 5$ of Proposition \ref{prop1}.
\begin{figure}[htbp]
\centering
\includegraphics[scale=0.4]{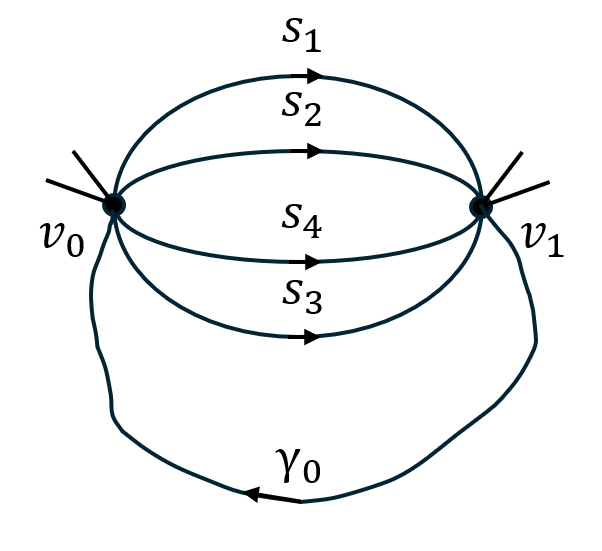}
\caption{subgraph $H_4$ and $A$ is connected \label{H_4 A connected}}
\end{figure}

Since the automorphism $\tilde{f}$ fixes all vertices,  the automorphism $f$ preserves $\pi_1(H_4, v_0)$ and $\pi_1(A, v_0)$. Let 
$\Phi = R_{x_1x_2}fR_{x_1x_2}^{-1}f \in N(f)$. It acts on $\pi_1(H_4, v_0)$ as follows:
\begin{align*}
     x_1 &\xmapsto{f} x_1^{-1} \xmapsto{R_{x_1x_2}^{-1}} x_2x_1^{-1} \xmapsto{f}x_1x_2^{-1}\xmapsto{R_{x_1x_2}}x_1,\\
     x_2 &\xmapsto{f} x_1x_2^{-1}x_1^{-1} \xmapsto{R_{x_1x_2}^{-1}}x_1x_2^{-1}x_1^{-1} \xmapsto{f}x_2\xmapsto{R_{x_1x_2}}x_2,\\
     x_3 &\xmapsto{f} x_1x_2x_3 \xmapsto{R_{x_1x_2}^{-1}}x_1x_3 \xmapsto{f}x_2x_3\xmapsto{R_{x_1x_2}}x_2x_3.
\end{align*}
Therefore, $\Phi|_{\pi_1(H_4, v_0)}= L_{x_3x_2}$. Since $R_{x_1x_2}$ acts trivially on $\pi_1(A,v_0)$, so we have
\[
    \Phi|_{\pi_1(A,v_0)} =(f\circ f)|_{\pi_1(A,v_0)} = \mathrm{Id}_{\pi_1(A,v_0)}.
\]
Since 
\[
    \tilde{f}(s_3\gamma_0) = \tilde{f}(s_3)\tilde{f}(\gamma_0)=s_4\tilde{f}(\gamma_0)=(s_4s_2^{-1})(s_2s_3^{-1}s_4s_2^{-1})(s_2s_4^{-1})(s_3\gamma_0)(\gamma_0^{-1}\tilde{f}(\gamma_0)),
\]
it follows that on $\Gamma$, we have
\begin{align}\label{finverse on gamma p=2}
     f(s_3\gamma_0)&=(s_4s_2^{-1})(s_2s_3^{-1}s_4s_2^{-1})(s_2s_4^{-1})(s_3\gamma_0)(\gamma_0^{-1}\tilde{f}^{-1}(\gamma_0))\\
     &=  x_3^{-1}x_2x_3(s_3\gamma_0)(\gamma_0^{-1}\tilde{f}(\gamma_0)).
\end{align}
Using the equation \eqref{finverse on gamma p=2}, 
\[f(x_3^{-1}x_2x_3(s_3\gamma_0)(\gamma_0^{-1}\tilde{f}(\gamma_0)))  = f(f(s_3\gamma_0)) = s_3\gamma_0.\]
Then we calculate the map $\Phi$ acting on $\Gamma$:
\begin{align*}
\Phi|_\Gamma: s_3\gamma_0 &\xmapsto{f}  x_3^{-1}x_2x_3(s_3\gamma_0)(\gamma_0^{-1}\tilde{f}(\gamma_0))\\             
&\xmapsto{R_{x_1x_2}^{-1}}  x_3^{-1}x_2x_3(s_3\gamma_0)(\gamma_0^{-1}\tilde{f}(\gamma_0))\\
&\xmapsto{f} s_3\gamma_0 \\
&\xmapsto{R_{x_1x_2}} s_3\gamma_0;
\end{align*}
This implies that $\Phi|_{\Gamma} = \text{Id}_{\Gamma}$. Thus, $\Phi = L_{x_3x_2}$ on $\pi_1(X,v_0)=F_n$. Since we have $L_{x_3x_2}\in N(f)$, by Lemma \ref{left multi}, 
\[
    N(f) = 
    \begin{cases}
        \mathrm{SAut}(F_n) & \text{if det}(\bar{f}) = 1, \\
        \mathrm{Aut}(F_n) & \text{if det}(\bar{f}) = -1.
    \end{cases}
\]
\end{subcase}

\begin{subcase}
In this case,  $A = X \setminus (\text{edges in } H_4)$ which is disconnected. Let $A = A_1 \cup A_2$ where $A_1$ contains the vertex $v_0$ and $A_2$ contains the vertex $v_1$. We have
\[ \pi_1(X, v_0) = \pi_1(H_4, v_0) * \pi_1(A_1, v_0) * \Gamma' , \]
where $\Gamma'=\{s_3\gamma s_3^{-1}|\gamma\in\pi_1(A_2, v_1)\}$.  See Figure \ref{H_4 A disconnected} for an illustration.

\begin{figure}[htbp]
\centering
\includegraphics[scale=0.4]{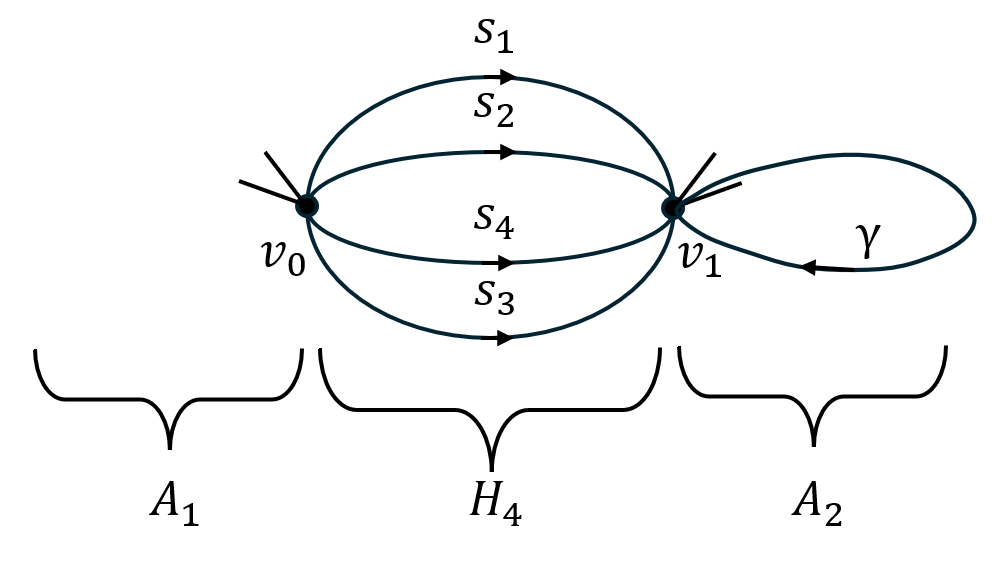}
\caption{subgraph $H_4$ and $A$ is disconnected \label{H_4 A disconnected}}
\end{figure}

Since the automorphism $\tilde{f}$ fixes all vertices,  the automorphism $f$ preserves $\pi_1(H_{pk}, v_0)$ and $\pi_1(A_1, v_0)$.  Using the same strategy as in the Case \ref{A connected p=2}, we consider $\Phi = R_{x_1x_2}fR_{x_1x_2}^{-1}f \in N(f)$. Then
\begin{align*}
    \Phi|_{\pi_1(H_4, v_0)} &=L_{x_3x_2},\text{ and}\\
    \Phi|_{\pi_1(A_1, v_0)} &=(f\circ f)|_{\pi_1(A_1, v_0)} = \text{Id}_{\pi_1(A_1, v_0)}.
\end{align*}
Since
\begin{align*}
\tilde{f}(s_3\gamma s_3^{-1}) &= \tilde{f}(s_3)\tilde{f}(\gamma)\tilde{f}(s_3^{-1})\\
&=s_4\tilde{f}(\gamma)s_4^{-1}\\
    &=(s_4s_2^{-1})(s_2s_3^{-1}s_4s_2^{-1})(s_2s_4^{-1})(s_3\tilde{f}(\gamma)s_3^{-1})((s_4s_2^{-1})(s_2s_3^{-1}s_4s_2^{-1})(s_2s_4^{-1}))^{-1}
\end{align*}
it follows that on $\Gamma'$,  we have 
\begin{align}\label{finverse on gamma2 p=2}
     f(s_3\gamma s_3^{-1})=x_3^{-1}x_2x_3(s_3\tilde{f}(\gamma)s_3^{-1})(x_3^{-1}x_2x_3)^{-1}, \quad\text{where }s_3\tilde{f}(\gamma)s_3^{-1}\in \Gamma'.
\end{align}
Using the equation  \eqref{finverse on gamma2 p=2},
\[f(x_3^{-1}x_2x_3(s_3\tilde{f}(\gamma)s_3^{-1})(x_3^{-1}x_2x_3)^{-1})  = f(f(s_3\gamma s_3^{-1}) = s_3\gamma s_3^{-1}.\]
Then we calculate the map $\Phi$ acting on $\Gamma'$:
\begin{align*}
\Phi|_{\Gamma'}: s_3\gamma s_3^{-1} &\xmapsto{f}  x_3^{-1}x_2x_3(s_3\tilde{f}(\gamma)s_3^{-1})(x_3^{-1}x_2x_3)^{-1}\\             
&\xmapsto{R_{x_1x_2}^{-1}}  x_3^{-1}x_2x_3(s_3\tilde{f}(\gamma)s_3^{-1})(x_3^{-1}x_2x_3)^{-1}\\
&\xmapsto{f} s_3\gamma s_3^{-1} \\
&\xmapsto{R_{x_1x_2}} s_3\gamma s_3^{-1};
\end{align*}
This implies that $\Phi|_{\Gamma'} = \text{Id}_{\Gamma'}$. Thus, $\Phi = L_{x_3x_2}$ on $\pi_1(X,v_0)=F_n$. Since we have $L_{x_3x_2}\in N(f)$, by Lemma \ref{left multi}, 
\[
    N(f) = 
    \begin{cases}
        \mathrm{SAut}(F_n) & \text{if det}(\bar{f}) = 1, \\
        \mathrm{Aut}(F_n) & \text{if det}(\bar{f}) = -1.
    \end{cases}
\]
\end{subcase}
\end{case}
To explain what cases remain to be dealt with, we make the following definitions.
\begin{defn} A graph is called a closed chain $C_k$ if it satisfies the following conditions: 
\begin{enumerate} 
    \item The graph consists of $k$ vertices, denoted as $v_0, v_1, \ldots, v_{k-1}$. 
    \item Taking indices modulo $k$ for $0\leq i\leq k-1$, the vertex $v_i$ is connected to the vertex $v_{i+1}$ by exactly two edges.
    \item The graph contains no other edges besides those described above.
\end{enumerate} 
\end{defn}

\begin{defn} A graph is called an open chain $O_k$ if it satisfies the following conditions: 
\begin{enumerate} 
    \item The graph consists of $k$ vertices, denoted as $v_0, v_1, \ldots, v_{k-1}$. 
    \item For $0\leq i\leq k-1$, each vertex $v_i$ is connected to the vertex $v_{i+1}$ by exactly two edges. Additionally, there is a loop attached at the vertices $v_0$ and $v_{k-1}$.
    \item The graph contains no other edges besides those described above.
\end{enumerate} 
\end{defn}
\begin{figure}[htbp]
    \centering
    \includegraphics[scale=0.4]{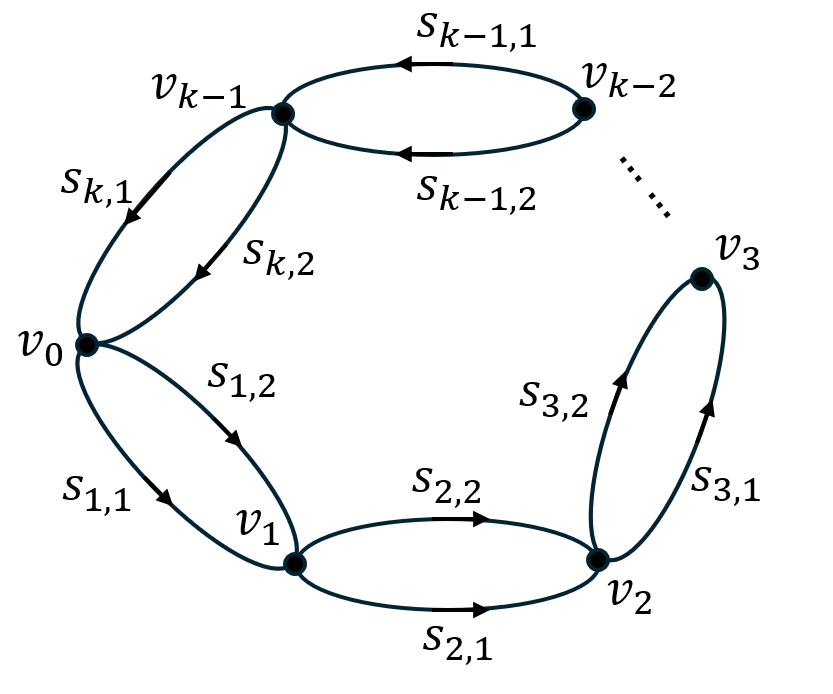}
    \caption{closed chain $C_k$}
\end{figure} 
\begin{figure}[htbp]
    \centering
    \includegraphics[scale=0.4]{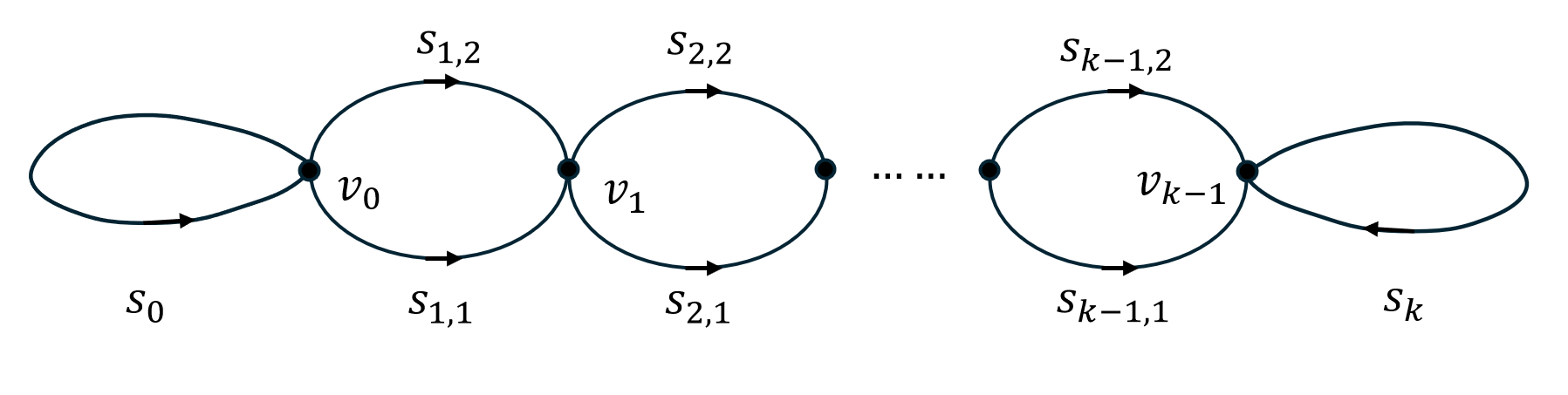}
    \caption{open chain $O_k$}
\end{figure} 

Consider a graph $X$ that has at least one edge which is not a loop, but does not contain the subgraph $H_4$. Under these conditions, $X$ must contain the subgraph $H_2$. We have two situations:
\begin{enumerate}
    \item the graph $X$ contains a closed chain;
    \item the graph $X$ does not contain a closed chain but contains an open chain.
\end{enumerate}
Now we begin our discussion of each case.

\begin{case}
In this situation, the graph \(X\) contains a closed chain $C_k$. It can be decomposed as $X= C_k\cup A_1\cup\ldots\cup A_{\ell}$  for some integer $\ell$. Here, for $1\leq i,j\leq \ell$,
\begin{itemize}
    \item  each $A_i$ is a connected graph,
    \item $A_i\cap A_j=\varnothing$,
    \item $A_i\cap C_k=\{v_{d(i,1)},\ldots, v_{d(i,m_i)}\}$ which is a set of vertices of $C_k$, arranged in an increasing order.
\end{itemize}
For each $1\leq i\leq \ell$ and $2\leq j\leq m_i$, let $\gamma_{ij}$ be a path in $A_i$ connecting $v_{d(i,1)}$ to $v_{d(i,j)}$ and define $\delta_{d(i,j)}$ as a path in $C_k$ connecting $v_{0}$ to $v_{d(i,j)}$ as follows:
\[
   \delta_{d(i,j)} = 
    \begin{cases}
        s_{1,2}s_{2,2}\ldots s_{d(i,j),2} & \text{if }d(i,j) \neq k-1, \\
        s_{k,2}^{-1} & \text{if }d(i,j) = k-1.
    \end{cases}
\]
Notice that $\delta_{d(i,1)}$ is empty. Then,
\[\pi_1(X,v_0)=\pi_1(C_k,v_0)*\Asterisk_{i=1}^{\ell}\delta_{d(i,1)}\pi_1(A_i, v_{d(i,1)})\delta_{d(i,1)}^{-1}*\Asterisk_{i=1}^{\ell}\Asterisk_{j=2}^{m_i}\langle\delta_{d(i,1)}\gamma_{ij}\delta_{d(i,j)}^{-1}\rangle.\]
See Figure \ref{closed_chain_ex} for an illustration.
\begin{figure}[htbp]
    \centering
    \includegraphics[scale=0.4]{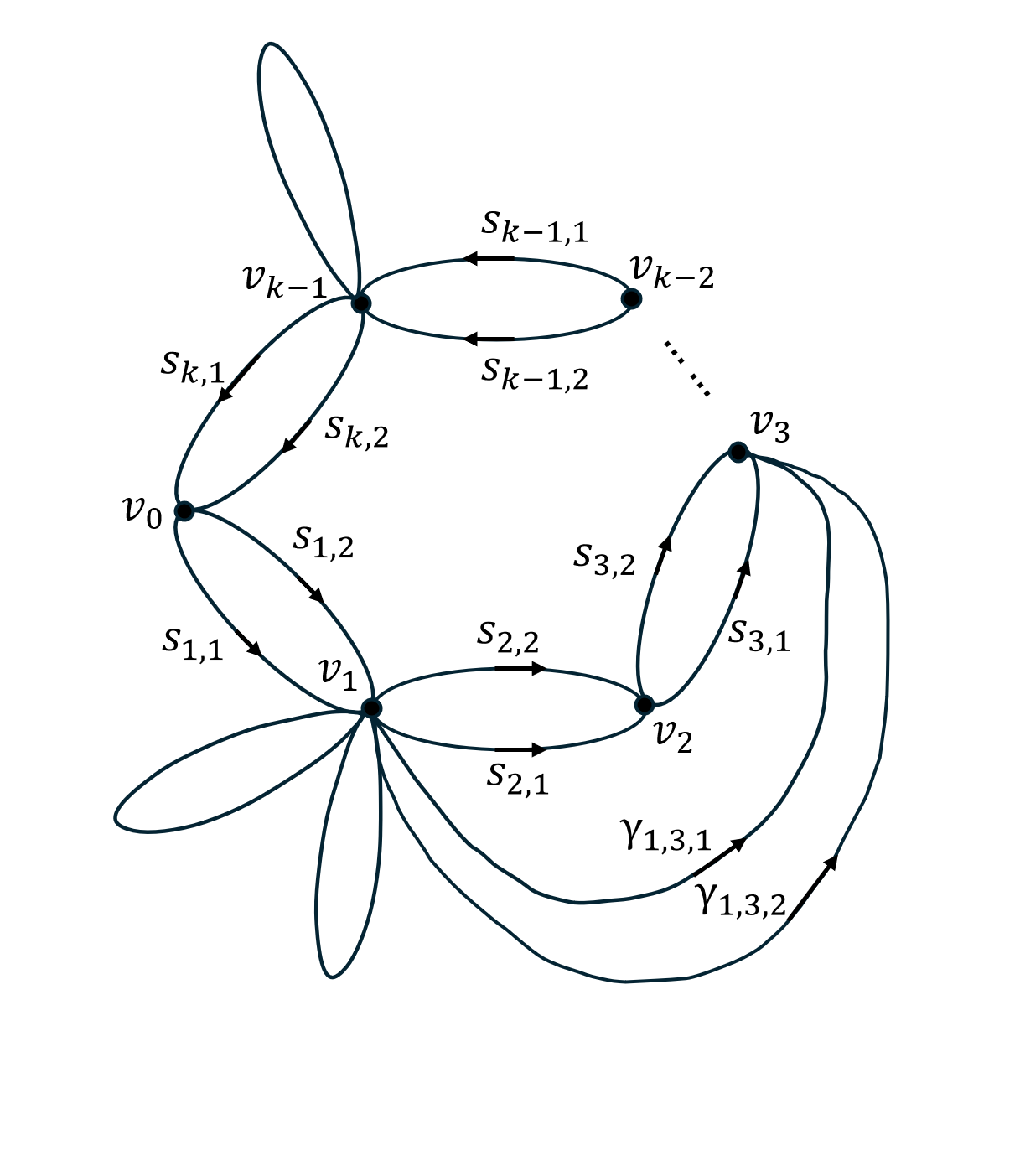}
    \caption{graph $X$ that contains a closed chain $C_k$\label{closed_chain_ex}}
\end{figure} 

Consider the following basis of $\pi_1(C_k, v_0)$:
\[x_1 = s_{1,1}s_{1,2}^{-1},\quad x_i= (s_{1,2}s_{2,2} \ldots s_{i-1,2})(s_{i,1}s_{i,2}^{-1})(s_{1,2}s_{2,2} \ldots s_{i-1,2})^{-1},\quad x_{k+1}=s_{1,2}s_{2,2} \ldots s_{k,2},\]
where $2\leq i\leq k$. The action of $f|_{\pi_1(C_k,v_0)}$ satisfies:
\begin{align*}
 f:&x_1 \mapsto x_1^{-1}\\
        &x_2 \mapsto x_1x_2^{-1}x_1^{-1}\\
        &x_3 \mapsto x_1x_2x_3^{-1}(x_1x_2)^{-1}\\
        &\vdots\quad \quad \quad \vdots\\
        &x_{k-1} \mapsto (x_1 \ldots x_{k-2})x_{k-1}^{-1}(x_1 \ldots x_{k-2})^{-1}\\
        &x_{k} \mapsto (x_1 \ldots x_{k-1})x_{k}^{-1}(x_1 \ldots x_{k-1})^{-1}\\
        &x_{k+1} \mapsto x_1x_2 \ldots x_kx_{k+1}.
\end{align*}
Consider $\Phi = fR_{x_{k-1}x_{k}}^{-1}fR_{x_{k-1}x_{k}}$. It acts on $\pi_1(C_k,v_0)$ as follows:
\begin{align*}
\Phi:  \text{ }& x_{k-1} \mapsto x_{k-1}x_{k} \mapsto(x_1 \ldots x_{k-2})x_{k}^{-1}(x_1 \ldots x_{k-1})^{-1}\mapsto (x_1 \ldots x_{k-2})x_{k-1}^{-1}(x_1 \ldots x_{k-2})^{-1}\mapsto x_{k-1},\\
       & x_{k} \mapsto x_{k} \mapsto (x_1 \ldots x_{k-1})x_k^{-1}(x_1 \ldots x_{k-1})^{-1}\mapsto (x_1 \ldots x_{k-1})x_{k}^{-1}(x_1 \ldots x_{k-1})^{-1} \mapsto x_k,\\
       & x_{k+1} \mapsto x_{k+1} \mapsto x_1x_2\ldots x_{k}x_{k+1} \mapsto x_1x_2\ldots x_{k-1}x_{k+1}\mapsto  x_{k}x_{k+1},
\end{align*}
and it fixes the other basis elements. Thus, $\Phi|_{\pi_1(C_k,v_0)}=L_{x_{k+1}x_k}$. Now, let us consider how $\Phi$ acts on $\delta_{d(i,1)}\pi_1(A_i, v_{d(i,1)})\delta_{d(i,1)}^{-1}$. Let $x_{\nu}$ be an element in $\delta_{d(i,1)}\pi_1(A_i, v_{d(i,1)})\delta_{d(i,1)}^{-1}$ where $\nu\in\pi_1(A_i, v_{d(i,1)})$. Then,
\[
   f(x_{\nu}) = 
    \begin{cases}
        (x_1 \ldots x_{d(i,1)})x_{\tilde{f}(\nu)}(x_1 \ldots x_{d(i,1)})^{-1}& \text{if }d(i,1) \neq k-1, \\
        (x_{k+1}^{-1}x_{k}^{-1}x_{k+1})x_{\tilde{f}(\nu)}(x_{k+1}^{-1}x_{k}^{-1}x_{k+1})^{-1} & \text{if }d(i,1) = k-1.
    \end{cases}
\]
The map $\Phi$ acts on $\delta_{d(i,1)}\pi_1(A_i, v_{d(i,j)})\delta_{d(i,1)}^{-1}$ as follows:
\begin{align*}
\text{when }&d(i,1) \neq k-1,\\
\Phi:  \text{ }& x_{\nu} \mapsto x_{\nu} \mapsto(x_1 \ldots x_{d(i,1)})x_{\tilde{f}(\nu)}(x_1 \ldots x_{d(i,1)})^{-1}\mapsto(x_1 \ldots x_{d(i,1)})x_{\tilde{f}(\nu)}(x_1 \ldots x_{d(i,1)})^{-1}\mapsto x_{\nu},\\
\text{when }&d(i,1) = k-1,\\
\Phi:  \text{ }& x_{\nu} \mapsto x_{\nu} \mapsto(x_{k+1}^{-1}x_{k}^{-1}x_{k+1})x_{\tilde{f}(\nu)}(x_{k+1}^{-1}x_{k}^{-1}x_{k+1})^{-1}\mapsto(x_{k+1}^{-1}x_{k}^{-1}x_{k+1})x_{\tilde{f}(\nu)}(x_{k+1}^{-1}x_{k}^{-1}x_{k+1})^{-1}\mapsto x_{\nu}.
\end{align*}
Therefore, the map $\Phi$ acts as identity on $\delta_{d(i,1)}\pi_1(A_i, v_{d(i,j)})\delta_{d(i,1)}^{-1}$. Let $x_{\gamma}=\delta_{d(i,1)}\gamma_{ij}\delta_{d(i,j)}^{-1}$. Because of the increasing order, $d(i,1)<d(i,j)$, then
\[
   f(x_{\gamma}) = 
    \begin{cases}
        (x_1 \ldots x_{d(i,1)})x_{\tilde{f}(\gamma)}(x_1 \ldots x_{d(i,j)})^{-1}& \text{if }d(i,j) \neq k-1, \\
        (x_1 \ldots x_{d(i,1)})x_{\tilde{f}(\gamma)}(x_{k+1}^{-1}x_{k}^{-1}x_{k+1})^{-1} & \text{if }d(i,j) = k-1.
    \end{cases}
\]
The map $\Phi$ acts on $\langle\delta_{d(i,1)}\gamma_{ij}\delta_{d(i,j)}^{-1}\rangle$ as follows:
\begin{align*}
\text{when }&d(i,j) \neq k-1,\\
\Phi:  \text{ }& x_{\gamma} \mapsto x_{\gamma} \mapsto(x_1 \ldots x_{d(i,1)})x_{\tilde{f}(\gamma)}(x_1 \ldots x_{d(i,j)})^{-1}\mapsto(x_1 \ldots x_{d(i,1)})x_{\tilde{f}(\gamma)}(x_1 \ldots x_{d(i,j)})^{-1}\mapsto x_{\gamma},\\
\text{when }&d(i,j) = k-1,\\
\Phi:  \text{ }& x_{\gamma} \mapsto x_{\gamma} \mapsto(x_1 \ldots x_{d(i,1)})x_{\tilde{f}(\gamma)}(x_{k+1}^{-1}x_{k}^{-1}x_{k+1})^{-1} \mapsto(x_1 \ldots x_{d(i,1)})x_{\tilde{f}(\gamma)}(x_{k+1}^{-1}x_{k}^{-1}x_{k+1})^{-1} \mapsto x_{\gamma}.
\end{align*}
Therefore, the map $\Phi$ acts as identity on $\langle\delta_{d(i,1)}\gamma_{ij}\delta_{d(i,j)}^{-1}\rangle$ as well. Hence, $\Phi =L_{x_{k+1}x_k}$ on $\pi_1(X,v_0)=F_n$. Since we have $L_{x_{k+1}x_k}\in N(f)$, by Lemma \ref{left multi}, 
\[
    N(f) = 
    \begin{cases}
        \mathrm{SAut}(F_n) & \text{if det}(\bar{f}) = 1, \\
        \mathrm{Aut}(F_n) & \text{if det}(\bar{f}) = -1.
    \end{cases}
\]
\end{case}
\begin{case}
In this situation, the graph \(X\) does not contain a closed chain but contains an open chain $O_k$. It can be decomposed as $X= O_k\cup A_1\cup\ldots\cup A_{\ell}$  for some integer $\ell$. Here, for $1\leq i,j\leq \ell$,
\begin{itemize}
    \item  each $A_i$ is a connected graph,
    \item $A_i\cap A_j=\varnothing$,
    \item $A_i\cap O_k=\{v_{d(i)}\}$ which is a single vertex of $O_k$.
\end{itemize}
\begin{figure}[htbp]
    \centering
    \includegraphics[scale=0.4]{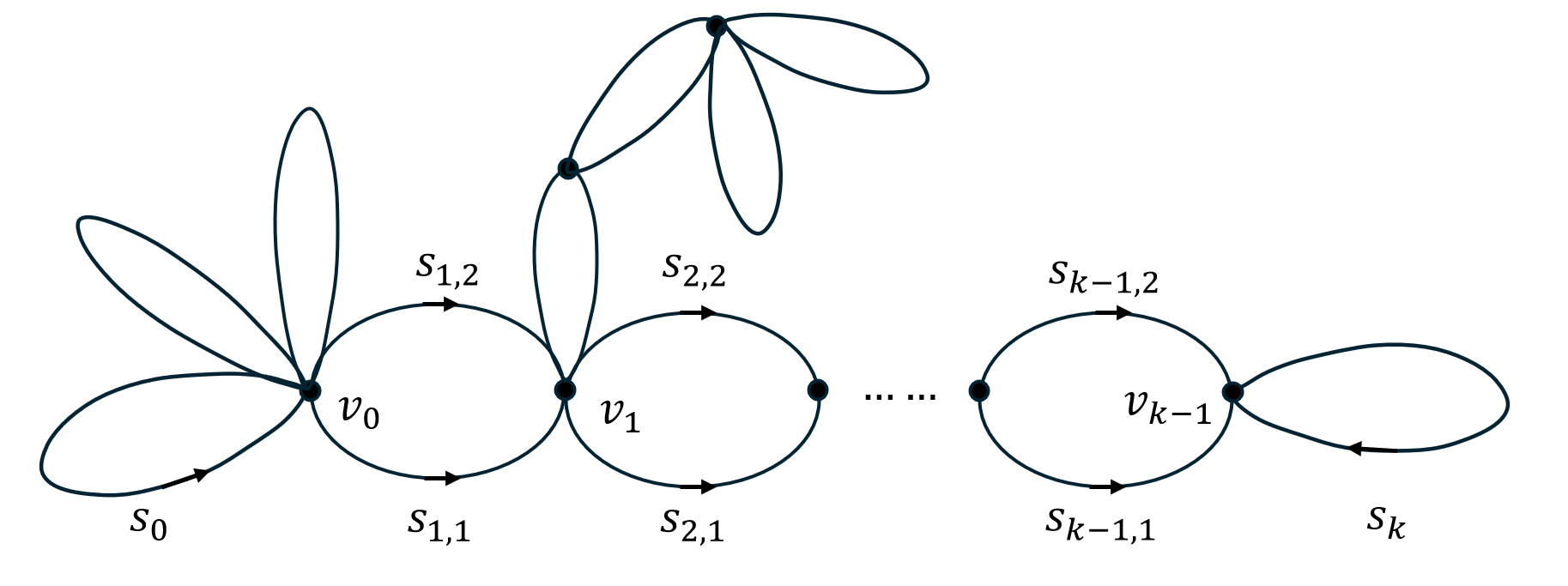}
    \caption{graph $X$ that contains an open chain $O_k$\label{open_chain_ex}}
\end{figure} 
For each $1\leq i\leq \ell$, define $\delta_{d(i)}$ as a path in $O_k$ connecting $v_{0}$ to $v_{d(i)}$ in the following way:
\[\delta_{d(i)} = s_{1,2}s_{2,2}\ldots s_{d(i),2} \]
Then,
\[\pi_1(X,v_0)=\pi_1(O_k,v_0)*\Asterisk_{i=1}^{\ell}\delta_{d(i)}\pi_1(A_i, v_{d(i)})\delta_{d(i)}^{-1}.\]
See Figure \ref{open_chain_ex} for an illustration. Consider the following basis of $\pi_1(O_k, v_0)$:
\begin{align*}
    &x_0 = s_0, \quad x_1 = s_{1,1}s_{1,2}^{-1},\quad x_i= (s_{1,2}s_{2,2} \ldots s_{i-1,2})(s_{i,1}s_{i,2}^{-1})(s_{1,2}s_{2,2} \ldots s_{i-1,2})^{-1}\\
     &x_{k}=(s_{1,2}s_{2,2} \ldots s_{k-1,2})(s_k)(s_{1,2}s_{2,2} \ldots s_{k-1,2})^{-1}
\end{align*}
where $2\leq i\leq k-1$. The action of $f$ on ${\pi_1(O_k,v_0)}$ has three situations, depending on how $f$ interacts with the terminal loops $x_0$ and $x_k$. Specifically, $f$ can either:
\begin{enumerate}
    \item fix both $x_0$ and $x_k$,
    \item invert at least one of them, or
    \item invert both $x_0$ and $x_k$.
\end{enumerate}
These cases correspond to the following maps:
\begin{align*}
f_1:&x_0\mapsto x_0 \\      
    &x_1 \mapsto x_1^{-1} \\       
    &x_2 \mapsto x_1x_2^{-1}x_1^{-1} \\  
    &\vdots \quad\quad\quad\vdots \\                          
    &x_k \mapsto (x_1 \ldots x_{k-1})x_k(x_{1} \ldots x_{k-1})^{-1}
\end{align*}
\begin{align*}
 f_2:&x_0\mapsto x_0^{-1} \\      
    &x_1 \mapsto x_1^{-1} \\       
    &x_2 \mapsto x_1x_2^{-1}x_1^{-1} \\  
    &\vdots \quad\quad\quad\vdots \\                            
    &x_k \mapsto (x_1 \ldots x_{k-1})x_k(x_{1} \ldots x_{k-1})^{-1}
\end{align*}
\begin{align*}
 f_3: &x_0\mapsto x_0^{-1} \\      
    &x_1 \mapsto x_1^{-1} \\       
    &x_2 \mapsto x_1x_2^{-1}x_1^{-1} \\  
    &\vdots \quad\quad\quad\vdots \\                            
    &x_k \mapsto (x_1 \ldots x_{k-1})x_k^{-1}(x_{1} \ldots x_{k-1})^{-1}
\end{align*}
\begin{subcase}
    For the map $f_1$, we are unable to find the conjugation map in $N(f_1)$ so that we fail to identify the group $N(f_1)$, see Question \ref{question}. However, we are able to handle the other two cases. 
\end{subcase}
\begin{subcase}\label{subcase_f2}
For the map $f_2$, let us consider the map $\Phi = L_{x_0x_1}f_2L_{x_0x_1}^{-1}f_2\in N(f_2)$. It acts on $\pi_1(O_k,v_0)$ as follows:
\begin{align*}
& x_0 \xmapsto{f_2} x_0^{-1} \xmapsto{L_{x_0x_1}^{-1}}x_0^{-1}x_1\xmapsto{f_2} x_0x_1^{-1}\xmapsto{L_{x_0x_1}} x_1x_0x_1^{-1}
\end{align*}
and it fixes the other basis elements. Thus, $\Phi|_{\pi_1(O_k,v_0)}=C_{x_0x_1}$. Now, let us consider how $\Phi$ acts on $\delta_{d(i)}\pi_1(A_i, v_{d(i)})\delta_{d(i)}^{-1}$. Let $x_{\nu}$ be an element in $\delta_{d(i)}\pi_1(A_i, v_{d(i)})\delta_{d(i)}^{-1}$ where $\nu\in\pi_1(A_i, v_{d(i)})$. Then,
\[
   f(x_{\nu}) = (x_1 \ldots x_{d(i)})x_{\tilde{f}(\nu)}(x_1 \ldots x_{d(i)})^{-1}
\]
The map $\Phi$ acts on $\delta_{d(i)}\pi_1(A_i, v_{d(i)})\delta_{d(i)}^{-1}$ as follows:
\begin{align*}
\Phi:  \text{ }& x_{\nu} \mapsto(x_1 \ldots x_{d(i)})x_{\tilde{f}(\nu)}(x_1 \ldots x_{d(i)})^{-1}\mapsto(x_1 \ldots x_{d(i)})x_{\tilde{f}(\nu)}(x_1 \ldots x_{d(i)})^{-1}\mapsto x_{\nu}\mapsto x_{\nu} .
\end{align*}
Therefore, the map $\Phi$ acts as identity on $\delta_{d(i)}\pi_1(A_i, v_{d(i)})\delta_{d(i)}^{-1}$. Hence, $\Phi=C_{x_0x_1}\in N(f_2)$ which implies $IA_n\subset N(f_2)$.

Then we consider the map $\Psi = L_{x_0x_k}f_2L_{x_0x_k}^{-1}f_2$. It acts on $\pi_1(O_k,v_0)$ as follows:
\begin{align*}
 x_0 &\xmapsto{f_2} x_0^{-1} \xmapsto{L_{x_0x_k}^{-1}}x_0^{-1}x_k\xmapsto{f_2} x_0(x_1 \ldots x_{k-1})x_k(x_{k-1} \ldots x_1^{-1})^{-1}\\
&\xmapsto{L_{x_0x_k}} x_kx_0(x_1 \ldots x_{k-1})x_k(x_{k-1} \ldots x_1^{-1})^{-1}
\end{align*}
and it fixes the other basis elements.  Moreover, the map $\Psi$ acts on $\delta_{d(i)}\pi_1(A_i, v_{d(i)})\delta_{d(i)}^{-1}$ as follows:
\begin{align*}
\Psi:  \text{ }& x_{\nu} \mapsto(x_1 \ldots x_{d(i)})x_{\tilde{f}(\nu)}(x_1 \ldots x_{d(i)})^{-1}\mapsto(x_1 \ldots x_{d(i)})x_{\tilde{f}(\nu)}(x_1 \ldots x_{d(i)})^{-1}\mapsto x_{\nu}\mapsto x_{\nu} .
\end{align*}
Therefore, the map $\Psi$ acts as identity on $\delta_{d(i)}\pi_1(A_i, v_{d(i)})\delta_{d(i)}^{-1}$. We conclude that the image of  $\Psi$ in $\mathrm{GL}_n(\mathbb{Z})$ is $e_{k+1,1}^2$ which normally generates $\Gamma_n(2)$. Hence, we have $\Gamma_n(2)\subset\overline{N(f_2)}$. We can then apply Lemma \ref{level 2}  to determine the normal closure $N(f_2)$.
\end{subcase}
\begin{subcase}
For the map $f_3$, let us consider the map $\Phi = L_{x_0x_1}f_3L_{x_0x_1}^{-1}f_3\in N(f_3)$. It acts on $\pi_1(O_k,v_0)$ as follows:
\begin{align*}
& x_0 \xmapsto{f_3} x_0^{-1} \xmapsto{L_{x_0x_1}^{-1}}x_0^{-1}x_1\xmapsto{f_3} x_0x_1^{-1}\xmapsto{L_{x_0x_1}} x_1x_0x_1^{-1}
\end{align*}
and it fixes the other basis elements. Thus, $\Phi|_{\pi_1(O_k,v_0)}=C_{x_0x_1}$. Now, let us consider how $\Phi$ acts on $\delta_{d(i)}\pi_1(A_i, v_{d(i)})\delta_{d(i)}^{-1}$. Let $x_{\nu}$ be an element in $\delta_{d(i)}\pi_1(A_i, v_{d(i)})\delta_{d(i)}^{-1}$ where $\nu\in\pi_1(A_i, v_{d(i)})$. Then,
\[
   f(x_{\nu}) = (x_1 \ldots x_{d(i)})x_{\tilde{f}(\nu)}(x_1 \ldots x_{d(i)})^{-1}
\]
The map $\Phi$ acts on $\delta_{d(i)}\pi_1(A_i, v_{d(i)})\delta_{d(i)}^{-1}$ as follows:
\begin{align*}
\Phi:  \text{ }& x_{\nu} \mapsto(x_1 \ldots x_{d(i)})x_{\tilde{f}(\nu)}(x_1 \ldots x_{d(i)})^{-1}\mapsto(x_1 \ldots x_{d(i)})x_{\tilde{f}(\nu)}(x_1 \ldots x_{d(i)})^{-1}\mapsto x_{\nu}\mapsto x_{\nu} .
\end{align*}
Therefore, the map $\Phi$ acts as identity on $\delta_{d(i)}\pi_1(A_i, v_{d(i)})\delta_{d(i)}^{-1}$. Hence, $\Phi=C_{x_0x_1}\in N(f_3)$ which implies $IA_n\subset N(f_3)$.

Given $\bar{f_3}|_{\pi_1(O_k,v_0)}=-I$ in $\mathrm{GL}_{k+1}(\mathbb{Z})$, the normal closure $N(f_3)$ relies on how $f_3$ acts on the rest part of the graph $X$. 
\setcounter{casea}{0}
\begin{casea}
If $\bar{f_3}=-I$ in $\mathrm{GL}_{n}(\mathbb{Z})$, then  we have the short exact sequence:
\[
1 \rightarrow IA_n \rightarrow N(f_3)\rightarrow \{\pm I\} \rightarrow 1.
\]
\end{casea}
\begin{casea}
If there exists a loop that is fixed by $f_3$ and no loops are permuted, then by replacing $f_3$ by a conjugate element, we can assume that the new basepoint $v_0'$ lies at the fixed loop. After reordering the basis of $\pi(X,v_0')$, we can label this fixed loop as $x_0$. This leads us to the Case \ref{subcase_f2}. 
\end{casea}
\begin{casea}
If there exists two loops that are permuted by $f_3$,  then by replacing $f_3$ by a conjugate element, we can assume that the new basepoint $v_0'$ lies at the permuted loops. After reordering the basis of $\pi(X,v_0')$, we label the two permuted loops as $x_0$ and $x_1$. The map $f_3$ satisfies:
\begin{align*}
 f_3: &x_0\mapsto x_1 \\      
    &x_1 \mapsto x_0 \\       
    &x_i \mapsto f(x_i)
\end{align*}
where $1\leq i\leq n-2$. Notice that $f_3$ cannot wrap the basis around the loop $x_0$ and $x_1$. Therefore, $f(x_i)$ does not contain $x_0$ and $x_1$. Consider $\Psi=f_3I_{x_0}f_3I_{x_0}\in N(f_3)$. Then
\begin{align*}
\Psi:  \text{ }& x_{0} \mapsto x_0^{-1}\mapsto x_1^{-1}\mapsto x_1^{-1}\mapsto x_0^{-1},\\ 
& x_{1} \mapsto x_1\mapsto x_0\mapsto x_0^{-1}\mapsto x_1^{-1},\\
& x_{i} \mapsto x_i\mapsto f(x_i)\mapsto f(x_i)\mapsto x_i.
\end{align*}
 Consider $L_{x_0x_2}\Psi L_{x_0x_2}^{-1}\Psi$. It  acts on  the basis elements as follows:
\begin{align*}
    &x_{0} \xmapsto{\Psi} x_0^{-1} \xmapsto{L_{x_{0}x_{2}}^{-1}} x_0^{-1}x_2 \xmapsto{ \Psi } x_0x_2 \xmapsto{L_{x_0x_2}} x_2x_0x_2,
\end{align*}
and it fixes the other generators. We conclude that the image of $L_{x_0x_2}\Psi L_{x_0x_2}^{-1}\Psi$ in $\mathrm{GL}_n(\mathbb{Z})$ is $e_{3,1}^2$ which normally generates $\Gamma_n(2)$. Therefore,$\Gamma_n(2)\subset \overline{N( f_3)}$. Since $\bar{f_3}\notin\Gamma_n(2)$, by Lemma \ref{level 2}, 
\[
    N(f) = 
    \begin{cases}
        \mathrm{SAut}(F_n) & \text{if det}(\bar{f}) = 1, \\
        \mathrm{Aut}(F_n) & \text{if det}(\bar{f}) = -1.
    \end{cases}
\]
\end{casea}
\end{subcase}
\end{case}

Now, we finished the discussion of order $2$ elements in $\mathrm{Aut}(F_n)$. Let us continue examining the case in $\mathrm{Out}(F_n)$.  Let $f\in\mathrm{Out}(F_n)$ be of order $2$. Assume $f$ cannot be lifted to a finite order element of $\mathrm{Aut}(F_n)$. By Proposition \ref{Out}, there exists an $m\geq 1$ with $2(m-1)+1=n$ such that we can realize $f$ by the rotation automorphism $\tilde{f} : R_{2,m}\rightarrow R_{2,m}$. It satisfies:
\begin{align*}
\tilde{f}:&s_1\mapsto s_2 \\      
    &s_2 \mapsto s_1 \\       
    &\ell_{0,j} \mapsto \ell_{1,j} \\
    &\ell_{1,j} \mapsto \ell_{0,j}
\end{align*}
where $1\leq j\leq m-1$. See Figure \ref{R_2,3} for an illustration.

\begin{figure}[htbp]
\centering
\includegraphics[scale=0.4]{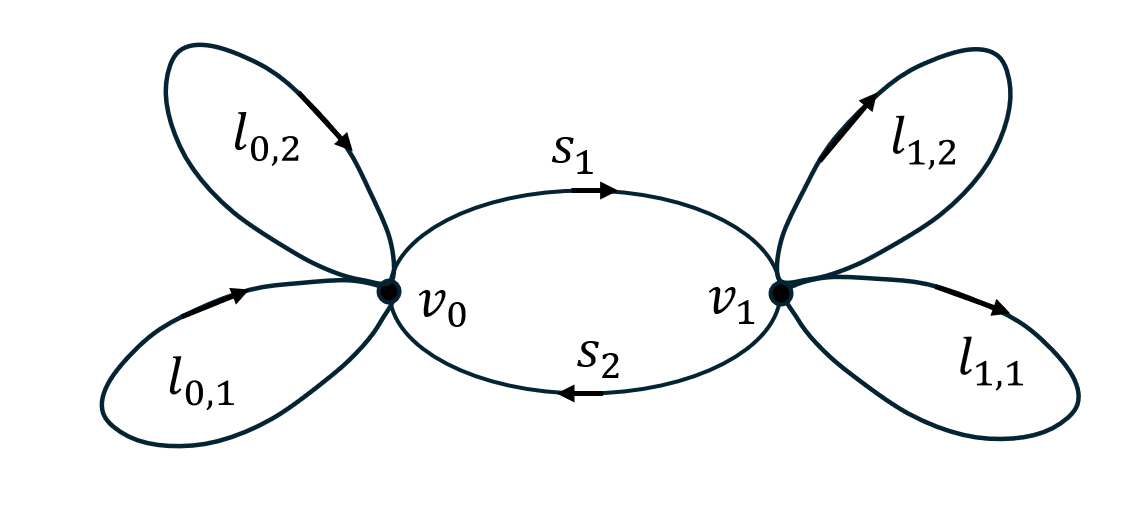}
\caption{graph $R_{2,3}$\label{R_2,3}}
\end{figure}
Consider the following basis of $\pi_1(R_{2,m}, v_0)$:
\begin{align*}
x_{0,j}= \ell_{0,j}, \quad x_{1,j} = s_1\ell_{1,j} s_1^{-1}, \quad c = s_1s_2 
\end{align*}
where $1\leq j\leq m-1$. The induced automorphism $\phi_{s_1,\tilde{f}}\in\mathrm{Aut}(\pi_1(R_{2,m}, v_0))\cong\mathrm{Aut}(F_n)$  satisfies:
\begin{align*}
\phi_{s_1,\tilde{f}}:      &x_{0,j} \mapsto x_{1,j}   &\phi_{s_1,\tilde{f}}^{-1}:     &x_{0,j} \mapsto c^{-1} x_{1,j} c \\
        &x_{1,j} \mapsto c x_{0,j} c^{-1}    &    &x_{1,j} \mapsto x_{0,j} \\  
        &c \mapsto c                 &            &c \mapsto c
\end{align*}
where $1\leq j\leq m-1$.  Let $\Phi=\phi_{s_1,\tilde{f}}I_{x_{0,1}}\phi_{s_1,\tilde{f}}^{-1}I_{x_{0,1}}$, then
\begin{align*}
\Phi:& x_{0,1} \xmapsto{I_{x_{0,1}}} x_{0,1}^{-1} \xmapsto{\phi_{s_1,\tilde{f}}^{-1}}c^{-1} x_{1,1}^{-1} c\xmapsto{I_{x_{0,1}}}c^{-1} x_{1,1}^{-1}\xmapsto{\phi_{s_1,\tilde{f}}} x_{0,1}^{-1},\\
& x_{1,1} \xmapsto{I_{x_{0,1}}} x_{1,1} \xmapsto{\phi_{s_1,\tilde{f}}^{-1}}x_{0,1} c\xmapsto{I_{x_{0,1}}}x_{0,1}^{-1}\xmapsto{\phi_{s_1,\tilde{f}}} x_{1,1}^{-1},
\end{align*}
and it fixes the other basis elements in $\pi_1(R_{2,m}, v_0)$. Now we consider $L_{x_{0,1}x_{1,1}}\Phi L_{x_{0,1}x_{1,1}}^{-1}\Phi$. It acts on $\pi_1(R_{2,m}, v_0)$ as follows:
\begin{align*}
& x_{0,1} \xmapsto{\Phi} x_{0,1}^{-1} \xmapsto{L_{x_{0,1}x_{1,1}}^{-1}}x_{0,1}^{-1}x_{1,1}\xmapsto{\Phi} x_{0,1}x_{1,1}^{-1}\xmapsto{L_{x_{0,1}x_{1,1}}} x_{1,1}x_{0,1}x_{1,1}^{-1}
\end{align*}
and it fixes the other basis elements. Thus, the map $L_{x_{0,1}x_{1,1}}\Phi L_{x_{0,1}x_{1,1}}^{-1}\Phi= C_{x_{0,1}x_{1,1}}\in N(\phi_{s_1,\tilde{f}})$. It implies that $IA_n\subset N(\phi_{s_1,\tilde{f}})$. Now, we consider $L_{x_{0,1}c}\Phi L_{x_{0,1}c}^{-1}\Phi$. It acts on $\pi_1(R_{2,m}, v_0)$ as follows:
\begin{align*}
& x_{0,1} \xmapsto{\Phi} x_{0,1}^{-1} \xmapsto{L_{x_{0,1}c}^{-1}}x_{0,1}^{-1}c\xmapsto{\Phi} x_{0,1}c\xmapsto{L_{x_{0,1}c}} cx_{0,1}c
\end{align*}
and it fixes the other basis elements. Then the image of $L_{x_{0,1}c}\Phi L_{x_{0,1}c}^{-1}\Phi$ in $GL_n(\mathbb{Z})$ is $e_{1,n}^2$ which normally generates $\Gamma_n(2)$. Therefore, $\Gamma_n(2)\subset\overline{N(\phi_{s_1,\tilde{f}})}$ . Since $\bar{\phi}_{s_1,\tilde{f}}\notin \Gamma_n(2)$, by Lemma \ref{level 2}, we conclude that
\[
    N(\phi_{s_1,\tilde{f}}) = 
    \begin{cases}
        \mathrm{SAut}(F_n) & \text{if det}(\bar{\phi}_{s_1,\tilde{f}}) = 1, \\
        \mathrm{Aut}(F_n) & \text{if det}(\bar{\phi}_{s_1,\tilde{f}}) = -1.
    \end{cases}
\]
Projecting $\phi_{s_1,\tilde{f}}$ to $\mathrm{Out}(\pi_1(R_{2,m}, v_0))$, since $\langle f\rangle\cong\langle [\phi_{s_1,\tilde{f}}]\rangle$, we conclude that
\[
    N(f) = 
    \begin{cases}
        \mathrm{SOut}(F_n) & \text{if det}(\bar{f}) = 1, \\
        \mathrm{Out}(F_n) & \text{if det}(\bar{f}) = -1.
    \end{cases}
\]

\end{document}